\setlist[itemize]{topsep=0.2em, itemsep=0.2em, leftmargin=2em}
\setlist[enumerate]{topsep=0.2em, itemsep=0.2em, leftmargin=2em}
\setlist[description]{topsep=0.2em, itemsep=0.2em, leftmargin=2em}
\definecolor{alert}{rgb}{0.8,0,0.3}
\newcommand{\alert}[1]{%
	\marginpar{%
		\ifodd\value{page} \raggedright \else \raggedleft \fi
		\footnotesize{\textcolor{alert}{#1}}
	}
}
\newcommand{\N}{\mathbb{N}}
\newcommand{\R}{\mathbb{R}}
\renewcommand{\H}{\mathbb{H}}
\newcommand{\df}{\mathrm{d}}
\newcommand{\E}{\mathbb{E}}
\newcommand{\X}{\mathfrak{X}}
\newcommand{\Div}{\mathop{\rm div}\nolimits}
\newcommand{\Length}{\mathop{\rm Length}\nolimits}
\newcommand{\Vol}{\mathop{\rm Vol}\nolimits}
\newcommand{\Nil}{\mathrm{Nil}_3}
\newcommand{\PSL}{\widetilde{\mathrm{SL}}_2(\mathbb{R})}
\newcommand{\Sol}{\mathrm{Sol}_3}
\newcommand{\Norm}[1]{ \left\lVert {#1} \right\rVert}
\newcommand{\Prod}[1]{ \left\langle {#1} \right\rangle}
\newcommand{\prodesc}[2]{\left\langle {#1},{#2} \right\rangle}
\newtheorem{theorem}{Theorem}[section]
\newtheorem{corollary}[theorem]{Corollary}
\newtheorem{lemma}[theorem]{Lemma}
\theoremstyle{definition}
\newtheorem{definition}[theorem]{Definition}
\theoremstyle{remark}
\newtheorem{remark}[theorem]{Remark}
\theoremstyle{proof}
\newtheorem{example}[theorem]{Example}
\numberwithin{equation}{section}
\title{Minimal graphs over non-compact domains in 3-manifolds with a Killing vector field}
\date{}
\author{Andrea Del~Prete}
\address{Dipartimento di Matematica "Felice Casorati"\\ Universit\`{a} degli Studi di Pavia\\ Via Adolfo Ferrata, 5 -- 27100 Pavia Italy}
\email{andrea.delprete@unipv.it}
\subjclass[2020]{53A10, 53C30, 53C42}
\keywords{Minimal surfaces, Constant mean curvature surfaces, Killing submersions, Removable singularity for prescribed mean curvature, Existence of minimal Killing graphs over unbounded domains, Height estimates}
\begin{document}
	
	\title[Minimal graphs over non-compact domains in 3-manifolds with a Killing vector field]{Minimal graphs over non-compact domains in 3-manifolds with a Killing vector field}
	
	\begin{abstract}
	%	\abstract{
			Let $\mathbb{E}$ be a connected and orientable Riemannian 3-manifold with a non-singular Killing vector field whose associated one-parameter group of the isometries of $\mathbb{E}$ acts freely and properly on $\E$. Then, there exists a Killing Submersion from $\E$ onto a connected and orientable surface $M$ whose fibers are the integral curves of the Killing vector field. 
			In this setting, assuming that $M$ is non-compact and the fibers have infinite length, we solve the Dirichlet problem for minimal Killing graphs over certain unbounded domains of $M$, prescribing piecewise continuous boundary values.
			We obtain general Collin-Krust type estimates.
			In the particular case of the Heisenberg group, we prove a uniqueness result for minimal Killing graphs with bounded boundary values over a strip.
			We also prove that isolated singularities of Killing graphs with prescribed mean curvature are removable.
			%}
	\end{abstract}
	
	\keywords{Minimal surfaces, Constant mean curvature surfaces, Killing Submersions, Removable singularity for prescribed mean curvature, Existence of minimal Killing graphs over unbounded domains, Height estimates}
	
%	\pacs[MSC 2023]{53A10, 53C30, 53C42}
	
	\maketitle
	\textbf{Acknowledgments} This work is part of the author's PhD thesis and has been partially supported by ``INdAM - GNSAGA Project'', codice
	CUP\_E55F22000270001, and by MCIN/AEI project PID2022-142559NB-I00. The author would like to thank B. Nelli for introducing him to the problem and for the many fruitful conversations and J.~M.~Manzano for the useful comments.
	
	\textbf{Data Availibility Statement} No data sets were generated or analysed during
	the current study.
	
	\textbf{Conflict of interest} The author declare that they have no conflict of interest.
	
	\section{Introduction}
	The Dirichlet problem for the mean curvature equation in unbounded domains has been extensively investigated over the past half-century, resulting in numerous advancements regarding the existence and uniqueness of solutions.
	
	One of the earliest significant contributions in this field was made by H.~Rosenberg and R.~Sa Earp, \cite{RoSa89}. They proved the existence of a solution to the Dirichlet problem for the minimal surface equation in unbounded domains that are contained in a strip or a sector of the plane.
	
	Another pioneering contribution in this area was made by P.~Collin and R.~Krust, \cite{CoKu91}. Their research focused on the Dirichlet problem for the prescribed mean curvature equation in Euclidean space, considering an unbounded domain $\Omega \subset \mathbb{R}^2$. The main theorem derived by Collin and Krust offers an asymptotic estimate of the difference between two solutions of this equation as these solutions approach infinity. This estimate is a fundamental tool for establishing the uniqueness of solutions over unbounded domains.
	
	To state the Collin--Krust result more precisely, it can be summarized as follows: 
	\begin{theorem}\label{Thm:Collin-Krust-original}
		Let $\Omega\subset\R^2$ be an unbounded domain and let $u,\tilde{u}\in C^2(\Omega)$ be such that $u_{\mid\partial\Omega}=\tilde{u}_{\mid\partial\Omega}$ and \[\Div\left(\frac{\nabla u}{\sqrt{1+\Norm{\nabla u}^2}}\right)=\Div\left(\frac{\nabla \tilde{u}}{\sqrt{1+\Norm{\nabla \tilde{u}}^2}}\right).\] Denote $\Lambda(r)=\Omega\cap\left\{(x,y)\in\R^2: x^2+y^2=r\right\}$ and $M(r)=\underset{\Lambda(r)}{\sup} \,| u-\tilde{u}|$.
		Hence, \[\underset{r\to\infty}{\liminf}\frac{M(r)}{\ln r}>0.\] Furthermore, if the length of $\Lambda(r)$ is uniformly bounded, then $\underset{r\to\infty}{\liminf}\frac{M(r)}{r}>0$.		
	\end{theorem}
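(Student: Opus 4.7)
My plan is to exploit the divergence-free vector field
$$X := \frac{\nabla u}{\sqrt{1+\Norm{\nabla u}^2}} - \frac{\nabla \tilde u}{\sqrt{1+\Norm{\nabla \tilde u}^2}},$$
which satisfies $\Div X = 0$ in $\Omega$ by hypothesis and $\Norm{X}<2$ pointwise. The strict convexity of the area integrand $p\mapsto\sqrt{1+|p|^2}$ yields the fundamental inequality $\prodesc{X}{\nabla w}\geq 0$ pointwise, with equality iff $\nabla u=\nabla\tilde u$, where $w:=u-\tilde u$. Crucially, $w$ vanishes on $\partial\Omega$.

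The first concrete step is a flux identity. Writing $\Omega_r:=\Omega\cap B_r(0)$ and applying the divergence theorem to $wX$, with $\Div(wX)=\prodesc{X}{\nabla w}$ and $w|_{\partial\Omega}=0$, I obtain
$$F(r) := \int_{\Omega_r} \prodesc{X}{\nabla w}\,dA = \int_{\Lambda(r)} w\,\prodesc{X}{\nu}\,d\ell \leq 2\,M(r)\,L(r),$$
where $L(r):=\Length(\Lambda(r))\leq 2\pi r$ and $\nu$ is the outward unit normal to $\partial B_r$. Since its integrand is non-negative, $F$ is non-decreasing; if $u\not\equiv\tilde u$, then $F(r_0)>0$ for some $r_0$, so $F(r)\geq F(r_0)>0$ for all $r\geq r_0$. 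This already gives, in the bounded-length case, the weak uniform bound $M(r)\geq F(r_0)/(2L_0)>0$, but not yet any growth.

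The remaining task — and the main obstacle — is to upgrade this qualitative positivity of $F$ to the quantitative growth $F(r)\gtrsim r$ in the bounded-length case and $F(r)\gtrsim r\log r$ in general; dividing the upper bound $F(r)\leq 2\,M(r)\,L(r)$ by $L(r)$ then yields the claimed $\liminf$ estimates. My approach is to derive a differential inequality for $F$ by combining (i) the co-area identity $F'(r)=\int_{\Lambda(r)}\prodesc{X}{\nabla w}\,d\ell$, (ii) a quantitative form of strict convexity, $\prodesc{X}{\nabla w}\gtrsim \Norm{\nabla w}^2/(1+\Norm{\nabla u}^2+\Norm{\nabla \tilde u}^2)$, and (iii) a planar Poincar\'{e}/isoperimetric inequality on $\Lambda(r)$ translating the oscillation $M(r)$ of $w$ there into a lower bound for $\int_{\Lambda(r)}\Norm{\nabla w}\,d\ell$. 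Integrating the resulting ODE produces the required growth rates.

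The hardest point is step (iii): closing the feedback loop between $F$ and $M$ is where the two-dimensional geometry of $\Omega$ enters in an essential way, via a planar inequality relating boundary oscillation to tangential gradient. Without such a geometric input, the qualitative monotonicity of $F$ alone gives no explicit growth rate for $M(r)$, and neither the logarithmic nor the linear conclusion can be extracted.
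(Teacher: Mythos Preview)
Your setup matches the paper's (Theorem~\ref{Thm:Collin-Krust-original} is stated there as background from \cite{CoKu91}; the paper proves the generalization Theorem~\ref{thm:Collin-Krust-general}, whose argument specializes to the Euclidean case $\mu\equiv 1$). The field $X$, the flux $F(r)=\int_{\Omega_r}\prodesc{X}{\nabla w}$, its monotonicity, and the divergence-theorem upper bound are the correct opening. The gap you flag in step~(iii) is real, and your proposed route through a Poincar\'e inequality on $\Lambda(r)$ does not close it: the convexity bound $\prodesc{X}{\nabla w}\gtrsim \Norm{\nabla w}^2/(1+\Norm{\nabla u}^2+\Norm{\nabla\tilde u}^2)$ carries an uncontrolled denominator, and nothing bounds $\int_{\Lambda(r)}\Norm{\nabla w}$ from below by the oscillation of $w$ without a~priori gradient estimates on $u,\tilde u$.

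The paper closes the loop by a different mechanism that avoids $\nabla w$ on the lower-bound side entirely. It retains $\eta(r):=\int_{\Lambda(r)}\Norm{X}$ as a live quantity instead of discarding it via $\Norm{X}<2$, so that (on a component where $w>0$) the divergence theorem gives the sharper upper bound $F(r)\leq M(r)\,\eta(r)$. The factorization identity (Lemma~\ref{lemma:factorization}) gives $\prodesc{X}{\nabla w}=\tfrac{W_u+W_{\tilde u}}{2}\Norm{N_u-N_{\tilde u}}^2\geq\Norm{X}^2$, and then co-area plus Cauchy--Schwarz on each $\Lambda(s)$ yield
\[
M(r)\,\eta(r)\;\geq\;\rho_0+\int_{r_0}^{r}\frac{\eta(s)^2}{L(s)}\,ds,\qquad \rho_0>0.
\]
This is a self-contained integral inequality in $\eta$; comparison with the explicit solution of $\zeta'=\zeta^2/(a\,L)$ (Chaplygin's theorem) forces $M(r)^2\gtrsim\rho_0\,g(r)$ with $g(r)=\int_{r_0}^r L(s)^{-1}\,ds$. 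A separate flux argument across $\partial\Omega$ then shows $\eta$ is bounded below by a positive constant, which bootstraps the estimate to $M(r)\gtrsim g(r)$. Since $L(r)\leq 2\pi r$ yields $g(r)\gtrsim\ln r$ and uniformly bounded $L(r)$ yields $g(r)\gtrsim r$, both conclusions follow. The idea missing from your outline is precisely this: keep $\eta(r)$ and feed it back into itself via Cauchy--Schwarz, rather than trying to reintroduce $M(r)$ through a boundary Poincar\'e inequality.
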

	
	The main purpose of this work is to extend the results of \cite{RoSa89} and Theorem \ref{Thm:Collin-Krust-original} to the more general setting of a Killing Submersion. We consider an oriented and connected 3-dimensional Riemannian manifold $\E$ admitting a non-zero Killing vector field $\xi\in\X(\E)$ and a Riemannian Submersion $\pi:\E\to M$ onto a Riemannian surface $M$, connected and oriented, whose fibers are the integral curves of $\xi$, that will be referred to as Killing Submersion. This structure is a natural setting where we can study both the product 3-manifolds $M\times\R$ and the simply connected homogeneous 3-dimensional Riemannian manifold $\E$, whose space of Killing vector fields $\mathcal{K}(\E)$  has dimension at least 3, and each $\xi\in\mathcal{K}(\E)$ gives rise to a Killing Submersion structure at points where $\mu=\Norm{\xi}>0$ (see \cite[Examples 2.4, 2.5]{LerMan17}). For our purpose, in this manuscript we assume that $M$ is non-compact and the fibers have infinite length.
	
	The results originally established in \cite{RoSa89} have been further expanded upon in subsequent works. In \cite{SaTo00}, R. Sa Earp and E. Toubiana extended these results to hyperbolic space, while in \cite{SaTo08}, they considered the case of $\H^2\times\R$. The authors, along with B. Nelli, further extended these findings to the Heisenberg group in \cite{NeSaETo17}. In this work, we provide sufficient conditions that ensure the existence of a solution to the Dirichlet problem for the minimal surface equation in any Killing Submersion over certain unbounded domains of $M$. Importantly, our approach distinguishes itself from previous works by not relying on the existence of supersolutions and subsolutions, which were necessary conditions in the proofs of earlier results.
	
	Regarding Theorem \ref{Thm:Collin-Krust-original}, it has been extended to unitary Killing Submersions by C.~Leandro and H.~Rosenberg in \cite[Theorem 5.1]{LeRo09}, and improved in the specific case of minimal graphs in the three-dimensional Heisenberg group by J.~M.~Manzano and B.~Nelli in \cite[Theorem 7]{MaNe17}. In all these results, the domain exhibits uniformly bounded or linear expansion, that is, there exists a positive constant $C$ such that either $\underset{r\to\infty}{\limsup}\Length(\Lambda(r))\leq C$ or $\underset{r\to\infty}{\limsup}\tfrac{\Length(\Lambda(r))}{r}\leq C$.
	
	In Theorems \ref{thm:Collin-Krust-general} and \ref{thm:Collin-Krust-general-tau}, we provide a detailed description of the relationship between the asymptotic growth of the vertical distance between two graphs with the same prescribed mean curvature and boundary values, and the rate of expansion of the domain where they are defined, without making any assumptions about the domain. Specifically, in Theorem \ref{thm:Collin-Krust-general}, we show that the function $g(r)$, which describes the growth of the vertical distance, can be obtained by integrating the function $\tfrac{1}{\int_{\Lambda(r)}\mu^2}$. Building upon the ideas presented in \cite[Theorem 7]{MaNe17}, in Theorem \ref{thm:Collin-Krust-general-tau} we fix one of the two graphs and demonstrate that the growth function also depends on the area element of the fixed graph, which carries information about the bundle curvature $\tau$ and the prescribed mean curvature function $H$.
	Motivated by the examples, we believe that the hypotheses of Theorem \ref{thm:Collin-Krust-general-tau} are sharp. In other words, when these hypotheses are not satisfied, it is possible to find two Killing graphs with the same boundary values and prescribed mean curvature that have a bounded vertical distance.
	
	Utilizing these results, we prove the uniqueness of solutions to the Dirichlet problem for the minimal surface equation with bounded boundary values in a domain contained in a strip of $\mathbb{R}^2$ in the Heisenberg group (see Theorem \ref{them:uniquenessNil} and Corollary \ref{col:uniquenessNil}). This provides a positive answer to two open questions posed in \cite{NeSaETo17}.
	
	We also present additional results concerning graphs in Killing Submersions. Firstly, we prove a removable singularity theorem for constant mean curvature (CMC) surfaces, extending the result found in \cite[Theorem 4.1]{LeRo09} (see Theorem \ref{thm:Removable-Singularity}). Additionally, we extend some of the results from this article to higher dimensions (see Theorems \ref{thm:GD-CollinKrust} and \ref{thm:GC-RemSin}).
	
	The paper is organized as follows. In Section \ref{Preliminaries}, we describe the general properties and a local model for Killing Submersions, we define the Killing graphs and recall their mean curvature equation and some existence results. 
	In Section \ref{Existence-over-unbounded-dom}, we give sufficient conditions to prove the existence of minimal Killing graphs over unbounded domains $\Omega\subset M$.
	In Section \ref{Collin-Krust-results}, we prove the Collin-Krust type estimates.
	In Section \ref{Sec:UniquenessNil}, we restrict to the particular case of $\Nil$ and prove two uniqueness results in the strip.
	In Section \ref{Removale-sing-thm}, we generalize a removable singularity result proved by L.~Bers \cite{Be51} and R.~Finn \cite{Fi56} in Euclidean space using the technique applied by C.~Leandro and H.~Rosenberg \cite{LeRo09}. In Section \ref{Generalization-higher-dim}, we prove a removable singularity theorem and a Collin-Krust type result for Killing Submersions of arbitrary dimension.

	\section{Preliminaries and notation of Killing Submersions} \label{Preliminaries}
	In this section we recall some properties and results about Killing Submersions. For further details we refer to \cite{DLM, DMN, LerMan17}.
	
	Let $\E$ be a three-dimensional oriented Riemannian manifold, $\xi\in\mathfrak{X}(\E)$ be a non-zero Killing vector field such that the one-parameter group of isometries $G\subset\mathrm{Iso}(\E)$ of $\E$ associated to $\xi$ acts freely and properly onto $\E$. In this way, $M=\E/G$ is a well defined oriented surface that can be endowed with a unique Riemannian metric such that $\pi\colon\E\to M$ is a Riemannian submersion. The fibers of $\pi$ are the integral curves of $\xi$ and we refer to $\pi$ as a Killing Submersion. Assuming both $\E$ and $M$ are simply connected, it was proven in \cite[Theorems 2.6 and 2.9]{LerMan17} that the metric of $\E$ is uniquely determined by the choice of $M$ and $\mu,\tau\in C^\infty(M)$, with $\mu>0$. These geometric functions are the projection on $M$ through $\pi$ of the Killing length $\mu(p)=\Norm{\xi_p}$ and the bundle curvature $\tau(p)=\frac{-1}{\mu(p)}\langle\overline\nabla_u\xi,v\rangle$, where $\{u,v,\xi_p/\mu(p)\}$ is a positively oriented orthonormal basis of $T_p\E$ and $\overline\nabla$ denotes the Levi-Civita connection in $\E.$ Since both $\tau$ and $\mu$ are constant along the fibers they can be thought of as functions in $M.$ 
	If $ds^2$ is the metric of $M$, we call $\mu$-metric the conformal metric $\mu^2 ds^2$ and we will use the prefix ``$\mu$'' to indicate that the corresponding term is computed with respect to $\mu$-metric in $M$. 
	
	In this paper we will consider $M$ to be non-compact and the fibers of $\pi$ to have infinite length,  which are natural assumptions for our results. Notice that the non-compactness of $M$ guarantees the existence of a global section (\cite[Proposition 3.3]{LerMan17}).
	
	Denote by $\left\{\phi_t\right\}_{t\in\R}$ the one-parameter group of isometries associated to $\xi$. Let $\Omega\subseteq M$ be an open subset and let $F_0\colon\Omega \to\E$ be a smooth section. Hence, the Killing graph of $u\in C^\infty(\Omega)$ with respect to $F_0$ is defined as the section of $\pi$ parameterized by the map
	\[\begin{array}{rccc}
		F_u\colon&\Omega&\to&\E\\&p&\mapsto&\phi_{u(p)}(F_0(p))
	\end{array}.\]
	
	We recall some fundamental results about Killing graphs proved in \cite{DMN}. In this setting the mean curvature of the graph of $u$ with respect to the section $F_0$ is given by the following formula:
	\begin{equation}\label{eq:H}
		H_u=\frac{1}{2\mu}\,\mathrm{div}(\mu\,\pi_*(N_u))=\frac{1}{2\mu}\,\mathrm{div}\left(\frac{\mu^2\,Gu}{\sqrt{1+\mu^{2}\|Gu\|^2}}\right).
	\end{equation}	
	Here $\mathrm{div}$ denotes the divergence on $M$ and $Gu=\nabla u-\pi_*(\overline{\nabla}d)$ is the so called generalized gradient, where $d\in\mathcal{C}^\infty(\E)$ is the function such that $\phi_{d(p)}(F_0(\pi(p))=p$ for all $p\in\E.$ 
	We denote $W_u=\sqrt{1+\mu^{2}\|Gu\|^2}$ the area element of $F_u$ and we call angle function of $F_u$ the everywhere positive function
	\begin{equation*}\label{eqn:nu}
		\nu=\langle N_u,\xi\rangle=\frac{\mu}{W_u}.
	\end{equation*}
	The first result is a generalization of the technical lemma originally proved by R.~Finn~\cite{Fi56} in $\R^3$. 
	\begin{lemma}[{\cite[Lemma 2.7]{DMN}}]\label{lemma:factorization}
		For any $u,v\in\mathcal{C}^1(M)$, let $N_u$ and $N_v$ be the upward-pointing unit normal vector fields to $F_u$ and $F_v$, respectively. Then
		\[\left\langle\frac{Gu}{W_u}-\frac{Gv}{W_v},Gu-Gv\right\rangle=\frac{1}{2\mu^2}(W_u+W_v)\|N_u-N_v\|^2\geq 0.\]
		Equality holds at some point $q\in M$ if and only if $\nabla u(q)=\nabla v(q)$.
	\end{lemma}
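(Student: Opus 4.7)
The plan is to verify the identity pointwise by decomposing each normal into its horizontal and vertical components relative to the Killing submersion. Fix $q\in M$. The points $F_u(q)$ and $F_v(q)$ lie on the same $\xi$-fiber, so the isometry $\phi_{v(q)-u(q)}$ induces a linear isometry between $T_{F_u(q)}\E$ and $T_{F_v(q)}\E$ that preserves both the horizontal distribution and the Killing direction. After transporting $N_v$ along this isometry, I can form the inner product $\langle N_u,N_v\rangle$ inside a single tangent space.

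At $F_u(q)$, the definition of the angle function gives $\langle N_u,\xi/\mu\rangle = 1/W_u$, so the vertical component of $N_u$ is $\xi/(\mu W_u)$; from \eqref{eq:H} we have $\pi_*(N_u) = \mu\, Gu/W_u$, hence the horizontal component of $N_u$ is the horizontal lift of $\mu\, Gu/W_u$ (and analogously for $N_v$). Since $\pi$ is a Riemannian submersion, inner products of horizontal lifts coincide with those of their projections, so
\[
\langle N_u,N_v\rangle = \frac{1+\mu^2\langle Gu,Gv\rangle}{W_uW_v},
\qquad
\|N_u-N_v\|^2 = \frac{2\bigl(W_uW_v - 1 - \mu^2\langle Gu,Gv\rangle\bigr)}{W_uW_v}.
\]
Expanding the left-hand side of the claimed identity directly and using the relations $\mu^2\|Gu\|^2 = W_u^2 - 1$ and $\mu^2\|Gv\|^2 = W_v^2 - 1$, a short algebraic manipulation yields
\[
\left\langle\frac{Gu}{W_u}-\frac{Gv}{W_v},\, Gu-Gv\right\rangle
= \frac{(W_u+W_v)\bigl(W_uW_v - 1 - \mu^2\langle Gu,Gv\rangle\bigr)}{\mu^2 W_uW_v},
\]
which equals $\tfrac{1}{2\mu^2}(W_u+W_v)\|N_u-N_v\|^2$. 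Non-negativity then reduces to the inequality $W_uW_v \geq 1+\mu^2\langle Gu,Gv\rangle$, which follows from Cauchy--Schwarz applied to $Gu,Gv$ together with the identity $(W_uW_v)^2 - (1+\mu^2\|Gu\|\|Gv\|)^2 = \mu^2(\|Gu\|-\|Gv\|)^2$.

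For the equality case, $\|N_u - N_v\| = 0$ at $q$ forces the horizontal and vertical components of $N_u$ and $N_v$ to agree at $q$, hence $W_u(q) = W_v(q)$ and $Gu(q) = Gv(q)$. Since the term $\pi_*(\overline\nabla d)$ appearing in the generalized gradient depends only on the fixed section $F_0$ and not on the particular function, one has $Gu - Gv = \nabla u - \nabla v$, so $Gu(q) = Gv(q)$ is equivalent to $\nabla u(q) = \nabla v(q)$. The main conceptual obstacle is the correct identification of $N_u$ as the sum of the horizontal lift of $\mu\, Gu/W_u$ and the vertical vector $\xi/(\mu W_u)$, together with the use of the Killing flow to compare the two normals based at distinct fiber points before inner-producting them; once these are in place the remaining verification is purely algebraic.
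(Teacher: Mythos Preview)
The paper does not prove this lemma; it is quoted verbatim from \cite[Lemma 2.7]{DMN} and stated without proof in the preliminaries section, so there is no in-paper argument to compare against. Your proof is correct and is essentially the standard verification: splitting $N_u$ into its horizontal lift $\widetilde{\mu Gu/W_u}$ and vertical part $\xi/(\mu W_u)$, computing $\langle N_u,N_v\rangle=(1+\mu^2\langle Gu,Gv\rangle)/(W_uW_v)$, and then matching both sides to the common expression $(W_u+W_v)(W_uW_v-1-\mu^2\langle Gu,Gv\rangle)/(\mu^2 W_uW_v)$. The Cauchy--Schwarz step and the equality analysis via $Gu-Gv=\nabla u-\nabla v$ are also fine. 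One minor remark: you could drop the discussion of transporting $N_v$ along $\phi_{v(q)-u(q)}$ altogether, since everything in the identity lives on $M$ once you use $\pi_*N_u=\mu Gu/W_u$ and $\langle N_u,\xi/\mu\rangle=1/W_u$; the inner product $\langle N_u,N_v\rangle$ is really just shorthand for the sum of the inner products of the projected horizontal parts and of the vertical coefficients, so no identification of tangent spaces in $\E$ is strictly necessary.
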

	The following lemma provides a maximum principle for Killing graphs over compact domains of $M$.
	\begin{lemma}[{\cite[Proposition 4.2]{DMN}}]\label{MaxPrin}
		Let $\Omega$ be a relatively compact open subset of $M$ with piecewise regular boundary. Let $u,v\in\mathcal{C}^\infty(\Omega)$ be functions that extend continuously to $\overline{\Omega}\setminus C,$ where $C\subset\partial \Omega$ is the finite set of non-continuity points of $u_{\mid\partial\Omega}$ and $v_{\mid\partial\Omega}$. If
		\begin{itemize}
			\item[i)] $H(u)\geq H(v)$ in $\Omega$ and
			\item[ii)] $u\leq v$ in $\partial \Omega\setminus C$,
		\end{itemize}
		then $u\leq v$ in $\Omega$.
	\end{lemma}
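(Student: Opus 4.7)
The plan is to reformulate hypothesis i) as a sign condition on the divergence of a bounded vector field and then to test it against the positive part of $u-v$, exploiting the strict monotonicity encoded in Lemma~\ref{lemma:factorization}. Setting $X_w:=\mu^{2}Gw/W_w$ for $w\in\{u,v\}$, equation~\eqref{eq:H} rewrites hypothesis i) as $\Div(X_u-X_v)\geq 0$ on $\Omega$, and Lemma~\ref{lemma:factorization} gives the pointwise monotonicity $\langle Gu-Gv,\,X_u-X_v\rangle\geq 0$ with equality exactly where $\nabla u=\nabla v$. Observe also the uniform bound $\Norm{X_u},\Norm{X_v}\leq\mu$ on $\overline\Omega$, coming from $\mu^{2}\Norm{Gw}/W_w\leq\mu$.

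Assume by contradiction that $\Omega^+:=\{p\in\Omega:u(p)>v(p)\}$ is non-empty. Since $(u-v)^+$ need not be bounded near $C$, I would introduce the truncation $w_k:=\min\{(u-v)^+,k\}$ for $k>0$, a bounded Lipschitz function vanishing on $\Omega\setminus\Omega^+$ and satisfying $\nabla w_k=\chi_{\{0<u-v<k\}}(Gu-Gv)$ almost everywhere (using $\nabla(u-v)=Gu-Gv$). For $\varepsilon>0$ excise small disks $B_\varepsilon(c)$ around each $c\in C$ and apply the divergence theorem on $\Omega_\varepsilon:=\Omega\setminus\bigcup_{c\in C}\overline{B_\varepsilon(c)}$ to the vector field $w_k(X_u-X_v)$:
$$\int_{\partial\Omega_\varepsilon}w_k\,\langle X_u-X_v,\eta\rangle\,ds=\int_{\Omega_\varepsilon}w_k\,\Div(X_u-X_v)\,dA+\int_{\Omega_\varepsilon}\langle\nabla w_k,\,X_u-X_v\rangle\,dA.$$
Both integrands on the right are pointwise non-negative: the first by $w_k\geq 0$ combined with hypothesis i), the second by Lemma~\ref{lemma:factorization}.

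The main delicate step is to show that the boundary integral tends to $0$ as $\varepsilon\to 0$. On $\partial\Omega\setminus\bigcup_{c}\overline{B_\varepsilon(c)}$ the factor $w_k$ vanishes by hypothesis ii); on each arc $\partial B_\varepsilon(c)\cap\Omega$ the integrand is bounded by $2k\max_{\overline\Omega}\mu$ and the arc has length $O(\varepsilon)$, so the total contribution is $O(k\varepsilon)\to 0$. Passing to the limit forces both non-negative right-hand terms to vanish, and the equality case of Lemma~\ref{lemma:factorization} yields $\nabla u=\nabla v$ on $\Omega^+\cap\{u-v<k\}$. Sending $k\to\infty$ gives $\nabla u=\nabla v$ on all of $\Omega^+$.

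Hence $u-v$ is locally constant on $\Omega^+$, so on any connected component $U\subseteq\Omega^+$ it equals some constant $\kappa>0$. Continuity of $u-v$ inside $\Omega$ forces $\partial U\cap\Omega=\emptyset$ (else $\kappa=0$ at such a point), so $U$ is a connected component of $\Omega$; continuity at $\partial\Omega\setminus C$ together with hypothesis ii) then forces $\partial U\cap(\partial\Omega\setminus C)=\emptyset$, leaving $\partial U\subseteq C$. This contradicts the fact that $\Omega$ has piecewise regular (and therefore $1$-dimensional) boundary, since $C$ is finite. The main obstacle is controlling the boundary flux near the singular set $C$: the combination of truncating $u-v$ at height $k$ and the a priori bound $\Norm{X_u},\Norm{X_v}\leq\mu$ is precisely what makes the limit $\varepsilon\to 0$ benign and allows the monotonicity provided by Lemma~\ref{lemma:factorization} to close the argument.
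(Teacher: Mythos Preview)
The paper does not give its own proof of this lemma: it is quoted verbatim from \cite[Proposition~4.2]{DMN} and no argument is supplied here. So there is nothing in the present paper to compare your proof against.

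That said, your argument is correct and is essentially the standard one for this type of comparison principle. The key ingredients---rewriting $H(u)\geq H(v)$ as $\Div(X_u-X_v)\geq 0$ via \eqref{eq:H}, the monotonicity identity of Lemma~\ref{lemma:factorization}, the a~priori bound $\Norm{X_w}\leq\mu$, truncation of $(u-v)^+$ at height $k$, and excision of $\varepsilon$-disks around the finite singular set $C$---fit together exactly as you describe, and the boundary flux estimate $O(k\varepsilon)$ is precisely what makes the excision step work. The only place where your write-up is slightly loose is the sentence ``else $\kappa=0$ at such a point'': what actually happens at a hypothetical $p\in\partial U\cap\Omega$ is that continuity forces $u(p)-v(p)=\kappa>0$, so $p\in\Omega^+$, and then a small connected neighbourhood of $p$ inside $\Omega^+$ meets $U$ and hence lies in $U$, contradicting $p\notin U$. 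With that minor rewording the final topological contradiction (that a connected component of a relatively compact open set in a non-compact surface cannot have its entire piecewise-regular boundary contained in a finite set) goes through cleanly.
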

	
	The following result is known as Compactness Theorem, it follows from applying gradient estimates and the Arzela-Ascoli Theorem and it is used to study the existence of the limit of a sequence of minimal graphs (see \cite[Section 2]{Del} for details).
	\begin{theorem}\label{thm:Compactness}
		Let $u_n$ be a $\mathcal{C}^0$-uniformly bounded sequence of smooth minimal Killing graphs. Then, there exists a subsequence of $u_n$ converging (in the $\mathcal{C}^k$-topology on compact subsets for all $k\in\N$) to a minimal Killing graph.
	\end{theorem}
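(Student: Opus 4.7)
The plan is to follow the standard elliptic-PDE compactness strategy adapted to the Killing graph equation~\eqref{eq:H}. First I would observe that, on any relatively compact open set $K \subset \Omega$ (where $\Omega \subset M$ is the common domain of the $u_n$), the $\mathcal{C}^0$ bound and the minimal surface equation for Killing graphs give a uniform interior gradient estimate of the form $\Norm{\nabla u_n}_{L^\infty(K)} \leq C(K, \Omega, \sup_n \Norm{u_n}_{L^\infty(\Omega)})$. Such an estimate is precisely what \cite{Del} supplies; geometrically it translates, via the relation $\nu = \mu/W_u$, into a uniform positive lower bound for the angle function of $F_{u_n}$ on $K$.

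With a uniform $\mathcal{C}^1$ bound in hand, the quasilinear operator appearing in~\eqref{eq:H} becomes uniformly elliptic on $K$, and its coefficients (which depend smoothly on $p\in M$, $u$, and $\nabla u$ through the smooth geometric data $\mu$, $\tau$, and $F_0$) are uniformly bounded together with all their derivatives. Standard interior Schauder theory then yields, on any slightly smaller compact set $K'\Subset K$, uniform $\mathcal{C}^{2,\alpha}$ bounds, and a bootstrap via repeated differentiation of the equation produces uniform $\mathcal{C}^{k,\alpha}(K')$ bounds for every $k\in\N$.

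Next I would exhaust $\Omega$ by an increasing sequence of compact subsets $K_1 \Subset K_2 \Subset \cdots$ with $\bigcup_j K_j = \Omega$. On each $K_j$, the Arzel\`a--Ascoli theorem (applied to $u_n$ together with all derivatives up to order $k$) yields a subsequence converging in $\mathcal{C}^k(K_j)$. A Cantor diagonal extraction across both $j$ and $k$ produces a single subsequence $u_{n_\ell}$ converging to some $u_\infty \in \mathcal{C}^\infty(\Omega)$ in the $\mathcal{C}^k$-topology on compact subsets for every $k$. Since the mean curvature operator in~\eqref{eq:H} depends continuously on $u$ and its first two derivatives, passing to the limit in $H_{u_{n_\ell}} = 0$ shows that $u_\infty$ is itself a smooth minimal Killing graph.

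The main obstacle is the first step: the interior gradient estimate. In Euclidean space this is classical (Bombieri--De Giorgi--Miranda), but in an arbitrary Killing submersion the equation carries lower-order terms coming from $\mu$, $\tau$, and the choice of reference section $F_0$, and one must verify that the geometric gradient estimate—equivalently, the uniform lower bound on the angle function—holds with constants depending only on the $\mathcal{C}^0$ bound and the ambient geometry on $K$. Once this ingredient is accepted (as stated in \cite{Del}), the rest of the argument is the routine Schauder-plus-diagonal scheme sketched above.
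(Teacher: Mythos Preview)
Your proposal is correct and matches the paper's own treatment: the paper does not spell out a proof but states that the result ``follows from applying gradient estimates and the Arzel\`a--Ascoli Theorem'' with details deferred to \cite[Section~2]{Del}, which is precisely the interior-gradient-estimate $\to$ Schauder bootstrap $\to$ diagonal-extraction scheme you outline. You have also correctly isolated the only nontrivial input, namely the interior gradient estimate for minimal Killing graphs in this generality, and appropriately cited \cite{Del} for it.
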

	Finally, we recall the statement of the Jenkins--Serrin Theorem \cite[Theorem 6.4]{DMN} in this setting that guarantees the existence of upper and lower barriers.
	To do so, we consider $\Omega\subset M$ to be a Jenkins--Serrin domain, that is a relatively compact open connected domain such that $\partial\Omega$ is piecewise regular and consists of $\mu$-geodesic open arcs or simple closed $\mu$-geodesics $A_1,\ldots,A_r,B_1,\ldots,B_s$ and $\mu$-convex curves $C_1,\ldots,C_m$ with respect to the inner conormal to $\Omega$. The finite set $V\subset\partial\Omega$ of intersections of all these curves will be called the \emph{vertex set} of $\Omega$. This domain is said \emph{admissible} if neither two of the $A_i$'s nor two of the $B_i$'s meet at a convex corner.
		
	The \emph{Jenkins--Serrin problem} consists in finding a minimal graph over $\Omega$, with limit values $+\infty$ on each $A_i$ and $-\infty$ on each $B_i$, and such that it extends continuously to $\Omega\cup(\cup_{i=1}^mC_i)$ with prescribed continuous values on each $C_i$ with respect to a prescribed initial section $F_0$ defined on a neighbourhood of $\Omega$.
	To state the Jenkins--Serrin Theorem we need to recall the definition of a $\mu$-polygon: we will say that $\mathcal{P}$ is a $\mu$-polygon inscribed in $\Omega$ if $\mathcal{P}$ is the union of disjoint curves $\Gamma_1\cup\dots\cup\Gamma_k$ satisfying the following conditions:
		\begin{itemize}
			\item $\mathcal P$ is the boundary of an open and connected subset of $\Omega$;
			\item each $\Gamma_j$ is either a closed $\mu$-geodesic or a closed piecewise-regular curve with $\mu$-geodesic components whose vertices are among the vertices of $\Omega$.
		\end{itemize}
		For such an inscribed $\mu$-polygon $\mathcal P$, define
		\begin{align*}
			\alpha(\mathcal P)&=\Length_\mu((\cup A_i)\cap\mathcal{P}),&\gamma(\mathcal P)&=\Length_\mu(\mathcal{P})
			,&
			\beta(\mathcal P)=&\Length_\mu((\cup B_i)\cap\mathcal{P}).
		\end{align*}
	\begin{theorem}[{\cite[Theorem 6.4]{DMN}}]\label{thm:JS}
		Let $\Omega$ be an admissible Jenkins--Serrin domain.
		\begin{enumerate}				
			\item If the family $\{C_i\}$ is non-empty, then the Jenkins--Serrin problem in $\Omega$ has a solution if and only if 
			\begin{equation}\label{JS-condition}
				2\alpha(\mathcal P)<\gamma(\mathcal P)\quad\text{and}\quad 2\beta(\mathcal P)<\gamma(\mathcal P)
			\end{equation} 
			for all inscribed $\mu$-polygons $\mathcal P\subset\Omega$, in which case the solution is unique. 
			
			\item If the family $\{C_i\}$ is empty, then the Jenkins--Serrin problem in $\Omega$ has a solution if and only if~\eqref{JS-condition} holds true for all inscribed $\mu$-polygons $\mathcal{P}\neq\partial\Omega$ and $\alpha(\partial\Omega)=\beta(\partial\Omega)$. The solution is unique up to vertical translations.
			\end{enumerate}
	\end{theorem}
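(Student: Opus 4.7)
My plan is to use the flux formalism that is standard for Jenkins-Serrin type theorems, adapted to the Killing submersion setting. The key observation is that in the minimal case, equation~\eqref{eq:H} asserts that $X_u:=\mu^{2}Gu/W_u$ is divergence-free on $M$, and it satisfies the sharp pointwise bound $\Norm{X_u}\leq\mu$, with equality occurring only in the limit along the outward $\mu$-conormal direction as $\Norm{Gu}\to\infty$. The proof then splits into three steps: necessity of \eqref{JS-condition} via Stokes' theorem applied to $X_u$, existence via a monotone limit of approximating Dirichlet problems with large finite boundary values, and uniqueness via the maximum principle (Lemma~\ref{MaxPrin}).

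For \textbf{necessity}, given a solution $u$ and any inscribed $\mu$-polygon $\mathcal P$ bounding a subregion $U\subset\Omega$, applying the divergence theorem to $X_u$ on an exhaustion of $U$ yields $\oint_{\mathcal P}\prodesc{X_u}{\nu}\,d\ell_\mu=0$, where $\nu$ is the outward $\mu$-conormal. On arcs of $\mathcal P$ contained in some $A_i$, analyzing the behaviour along the level curves $\{u=k\}$ as $k\to\infty$ shows that $X_u$ converges to $+\mu\nu$ and the contribution to the flux equals $+\Length_\mu$ of the arc; symmetrically, $B_i$-arcs contribute $-\Length_\mu$; and on any other arc (where $u$ remains bounded) one has the \emph{strict} pointwise inequality $|\prodesc{X_u}{\nu}|<\mu$, so the contribution is strictly less than $\Length_\mu$ in absolute value. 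Summing produces $2\alpha(\mathcal P)<\gamma(\mathcal P)$ and $2\beta(\mathcal P)<\gamma(\mathcal P)$, and in case $(2)$ (no $C_i$) the total balance $\alpha(\partial\Omega)=\beta(\partial\Omega)$ is forced by taking $\mathcal P=\partial\Omega$.

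For \textbf{existence}, I would build approximate solutions $u_k$ solving the Dirichlet problem in $\Omega$ with data $+k$ on $\bigcup A_i$, $-k$ on $\bigcup B_i$, and the prescribed continuous values on $\bigcup C_i$. These exist by the standard bounded-data theory for the quasilinear equation~\eqref{eq:H} combined with Lemma~\ref{MaxPrin}, and monotonicity in the data produces a pointwise limit $u\in[-\infty,+\infty]$; Theorem~\ref{thm:Compactness} then upgrades this to smooth convergence on any open subset where $u$ is locally bounded. The \emph{main obstacle} — and the heart of the argument — is to exclude a nontrivial divergence set $E=\{|u|=\infty\}\cap\Omega$: one must show each connected component of $E$ has boundary consisting of $\mu$-geodesic arcs meeting only at vertices of $\Omega$ (the Killing-submersion analogue of the Jenkins-Serrin straight-line lemma, saying that sequences of minimal graphs become vertical only along $\mu$-geodesics), thereby producing an inscribed $\mu$-polygon that violates~\eqref{JS-condition}. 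Boundary values along the $C_i$ are recovered by barrier constructions using the $\mu$-convexity hypothesis on these arcs. Finally, \textbf{uniqueness} in case $(1)$ follows from Lemma~\ref{MaxPrin} applied to $u-v$ after standard truncation near the infinite-value arcs; in case $(2)$, the vertical translation freedom is exactly what remains unpinned when no $C_i$-data is available to anchor the solution, and a flux-matching argument shows this is the full extent of the non-uniqueness.
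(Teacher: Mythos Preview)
The paper does not contain a proof of this statement: Theorem~\ref{thm:JS} is quoted verbatim from \cite[Theorem~6.4]{DMN} in the preliminaries section as a tool to be used later (for constructing barriers in Theorem~\ref{thm:existence-unbDom} and in Section~\ref{Sec:UniquenessNil}), and no argument for it is given here. So there is nothing in the present paper to compare your proposal against.

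That said, your outline is the standard Jenkins--Serrin strategy, correctly adapted to the Killing-submersion setting: the flux argument for necessity using the divergence-free field $X_u=\mu^2 Gu/W_u$ with the sharp bound $\|X_u\|\le\mu$, the monotone approximation scheme $u_k$ with truncated data $\pm k$, the divergence-set/straight-line lemma to rule out interior blow-up, and the maximum principle for uniqueness. This is indeed the architecture of the proof in \cite{DMN}. Two caveats worth flagging if you intend to flesh this out: first, the existence of the approximants $u_k$ with piecewise-continuous boundary data on a domain whose boundary has $\mu$-geodesic pieces (not strictly $\mu$-convex) is itself nontrivial and in \cite{DMN} requires a separate Perron-type or barrier argument rather than ``standard bounded-data theory''; second, the uniqueness in case~(1) cannot be obtained directly from Lemma~\ref{MaxPrin} as stated, since the solutions are unbounded near the $A_i$ and $B_i$---one needs the flux machinery again (or a more delicate comparison) to handle those arcs, not merely a truncation.
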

	
	In this paper we will use assume that $p$ is a fixed point of $M$ and $\Omega\subset M$ is an unbounded domain. We call $\textrm{Cut}(p)$ the cut locus of $p$,  $B_p(r)$ the geodesic ball in $M$ centered at $p$ of radius $r$, $\Omega(r)=B_p(r)\cap \Omega$ and $\Lambda(r)=\partial B_p(r)\cap\Omega$. Given a curve $\gamma\subset M$, we denote by $\Length(\gamma)$ the length of $\gamma$ computed with respect to the metric of $M$.
	
	\section{Existence of minimal Killing graphs over unbounded domains}\label{Existence-over-unbounded-dom}
	In this section we prove the existence of minimal Killing graphs over certain unbounded domains of $M$ inspired by the ideas in \cite{NeSaETo17}. 
	
	First of all, we need to define in which kind of unbounded domains we are going to work.
	\begin{definition}\label{Def:mu-wedge}
		For $p\in M$ and $\alpha\in(0,2\pi)$ let $\tilde{W}$ be a wedge of angle $\alpha$ in $T_pM$. Then, if $\exp_p\colon \tilde{W}\to M$ is a diffeomorphism, we say that $W=\exp_p(\tilde{W})$  is a \emph{$\mu$-wedge} of angle $\alpha$ and vertex $p$. Notice that $\partial W$ is the union of two half $\mu$-geodesic $\gamma_1$ and $\gamma_2$ outgoing from $p$.
	\end{definition}
	\begin{definition}\label{Def:.mu-strip}
		Let $\gamma_1,\gamma_2\in M$ be two complete non-intersecting curves, both diffeomorphic to $\R$, such that $\gamma_1\cup\gamma_2$ is the boundary of a connected domain $S\subset M.$ We will say $S$ is a ($\mu$-convex) $\mu$-strip if the $\mu$-geodesic curvature of $\gamma_1\cup\gamma_2$ with respect to the inner normal pointing $S$ is non-negative. 
	\end{definition}
	\begin{definition}\label{Def:admi}
		We say that a $\mu$-wedge $W$ is admissible if for any diverging sequences $\left\{p_1^n\right\}\subset\gamma_1$ and $\left\{p_2^n\right\}\subset\gamma_2$ the sequence $\left\{\gamma_n\right\}$ of $\mu$-geodesic connecting $p_1^n$ and $p_2^n$ does not converge. We say that a $\mu$-strip $S$ is admissible if for each end $E(S)$ of $S$, for any  diverging sequences $\left\{p_1^n\right\}\subset\gamma_1\gamma_2\cap E(S)$ and $\left\{p_2^n\right\}\subset\gamma_2\cap E(S)$ the sequence $\left\{\gamma_n\right\}$ of $\mu$-geodesic connecting $p_1^n$ and $p_2^n$ does not converge.
	\end{definition}
	We can now state the existence result as follows:
	\begin{theorem} \label{thm:existence-unbDom}
		Let $\Omega\subset M$ be an unbounded $\mu$-convex domain contained either in an admissible $\mu$-wedge $W$ of angle $\alpha<\pi$ or in an admissible $\mu$-strip $S$.
		Let $\varphi$ be a function on $\partial\Omega$ continuous except at a discrete and closed set $U\subset\partial\Omega$ of points where $\varphi$ has finite left and right limits.
		Then there exists a minimal extension of $\varphi$ over $\bar{\Omega}$.
	\end{theorem}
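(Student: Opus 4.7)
The plan is to exhaust $\Omega$ by bounded Jenkins--Serrin domains, solve the Dirichlet problem on each via Theorem \ref{thm:JS}, and extract a smooth limit via Theorem \ref{thm:Compactness}. Fix $p\in\Omega$ and, for each large $n$, construct an admissible Jenkins--Serrin domain $\widetilde{\Omega}_n$ approximating $\Omega\cap B_p(n)$ by replacing the arcs $\Lambda(n)\subset\partial B_p(n)$ with finitely many $\mu$-geodesic chords $A_1^n,\dots,A_{k_n}^n$ with endpoints on $\partial\Omega$, while preserving the $\mu$-convex portion $\Gamma_n\subset\partial\Omega$ intact. The domains $\widetilde{\Omega}_n$ exhaust $\Omega$.

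On each $\widetilde{\Omega}_n$ I solve two Jenkins--Serrin problems: on $\Gamma_n$ prescribe continuous functions $\varphi_n^\pm$ approximating $\varphi$ from above and below, constructed by linear interpolation between the one-sided limits across arcs of length $1/n$ centered at the points of $U\cap\Gamma_n$; on each chord $A_j^n$ prescribe the value $+\infty$ (respectively $-\infty$). To apply Theorem \ref{thm:JS} the condition \eqref{JS-condition} on inscribed $\mu$-polygons must be checked: the $\mu$-convexity of $\Gamma_n$ together with the containment of $\Omega$ in a wedge of angle $<\pi$ or in a strip ensures that no inscribed $\mu$-polygon can use any arc of $\Gamma_n$, so \eqref{JS-condition} reduces to length comparisons among the chords $A_j^n$, which can be arranged by an appropriate generic choice of the chords. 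This produces minimal graphs $u_n^\pm$ on $\widetilde{\Omega}_n$ with $u_n^-\le u_n^+$ by Lemma \ref{MaxPrin}.

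The crucial step is to establish local uniform boundedness of $\{u_n^\pm\}$ on $\Omega$. By comparison via Lemma \ref{MaxPrin}, after a suitable stabilization of the approximations $\varphi_n^\pm$, the sequence $u_n^+$ is monotonically decreasing and $u_n^-$ monotonically increasing in $n$, so both converge pointwise to limits $u^\pm\colon\Omega\to[-\infty,+\infty]$. If $u_n^+\to+\infty$ on some open subset of $\Omega$, a divergence-set argument shows that the divergence set contains a $\mu$-geodesic whose endpoints lie on $\partial\Omega$; such a $\mu$-geodesic would arise as a limit of $\mu$-geodesic chords joining $\gamma_1$ and $\gamma_2$, contradicting the admissibility of $W$ or $S$. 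Once the local bounds are established, Theorem \ref{thm:Compactness} yields smooth minimal limits $u^\pm$ on $\Omega$; a barrier argument at each continuous point of $\varphi$, using Theorem \ref{thm:JS} on small $\mu$-convex neighborhoods, gives $u^\pm=\varphi$ continuously on $\partial\Omega\setminus U$, and either $u^+$ or $u^-$ is the required minimal extension.

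The main obstacle I anticipate is the divergence-line argument in the third step. Converting the qualitative admissibility hypothesis into a concrete contradiction requires exploiting the $\mu$-geodesic structure of divergence sets for sequences of minimal graphs, in the spirit of the Jenkins--Serrin machinery revisited in \cite{DMN}. A further subtlety is the continuity of $u^\pm$ up to the boundary near $U$: since $\varphi_n^\pm\to\varphi$ only pointwise on $\partial\Omega$, continuous attainment of $\varphi$ near its jump points must be derived by interposing tailored barriers, using the boundedness of the one-sided limits of $\varphi$ together with the maximum principle of Lemma \ref{MaxPrin}.
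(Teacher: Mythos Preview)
Your overall strategy---exhaust $\Omega$ by bounded Jenkins--Serrin domains, use the solutions with $\pm\infty$ boundary data as barriers, and pass to a limit via Theorem~\ref{thm:Compactness}---is the paper's strategy. But two points are misjudged, and one of them is a genuine gap.

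The divergence-line argument you flag as the ``main obstacle'' is not needed at all. Once you know that $u_n^+$ is decreasing and $u_n^-$ is increasing on each fixed $\widetilde{\Omega}_{n_0}$, together with $u_n^-\le u_n^+$, you immediately get $u_{n_0}^-\le u_n^-\le u_n^+\le u_{n_0}^+$ on $\widetilde{\Omega}_{n_0}$ for all $n\ge n_0$, which is exactly the local $\mathcal{C}^0$ bound required for Theorem~\ref{thm:Compactness}. A decreasing sequence cannot diverge to $+\infty$, so the scenario you worry about never arises. This is precisely how the paper argues: the Jenkins--Serrin solutions $\omega_{n_0}^\pm$ on a fixed $\Omega_{n_0}$ sandwich every later $u_m$ by Lemma~\ref{MaxPrin}, and the rest is a diagonal extraction. (The paper additionally carries a separate sequence $u_n$ with \emph{finite} continuous data on the closing arc, which makes the boundary attainment of the limit automatic; you instead take the limit of the barriers themselves, which is fine but costs you the extra local-barrier step you mention at the end.)

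The real gap is elsewhere: the verification of the Jenkins--Serrin condition~\eqref{JS-condition} on your $\widetilde{\Omega}_n$, and with it the role of the hypotheses ``angle $\alpha<\pi$'' and ``admissible''. In the paper these hypotheses are \emph{not} used to rule out divergence lines; they are used to build the exhaustion so that \eqref{JS-condition} is satisfied by construction. The paper does not cut with geodesic balls: it works in the ambient wedge $W$ (resp.\ strip $S$), chooses divergent points on $\partial W$, and takes the $\mu$-geodesic joining them that is \emph{closest to the vertex} $p$. This choice guarantees that no other $\mu$-geodesic with the same endpoints lies inside the triangle $T_n$, so the only admissible inscribed $\mu$-polygons are trivial and \eqref{JS-condition} holds. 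Admissibility then ensures that these closing geodesics do not accumulate, so the triangles $T_n$ (resp.\ quadrilaterals $Q_n$) exhaust $W$ (resp.\ $S$); without it they would not, as the paper remarks right after the proof. Your construction via $B_p(n)$ exhausts $\Omega$ for free, but shifts the entire burden onto checking \eqref{JS-condition} for your chords $A_j^n$, and your justification (``no inscribed $\mu$-polygon can use any arc of $\Gamma_n$'', ``can be arranged by an appropriate generic choice of the chords'') does not address the actual inequality $2\alpha(\mathcal{P})<\gamma(\mathcal{P})$. That verification is exactly where the geometric hypotheses must do work, and the paper's device of taking the closing $\mu$-geodesic closest to $p$ is what makes it go through.
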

	\begin{proof}
		The argument is inspired by the proof of \cite[Theorem 4.3]{NeSaETo17} and relies on the Compactness Theorem and on the existence of local barriers, which is guaranteed by the existence of solutions to the Jenkins--Serrin problem. In particular, the goal is to construct a sequence $\left\{u_n\right\}$ of minimal graph that will converge to the solution. 
		
		We study separately the the cases $\Omega\subseteq W$ and $\Omega\subseteq S.$
		
		\textbf{Case 1:} $\Omega\subseteq W$.
		Let $p\in M$ be the vertex of the $\mu$-wedge $W$ containing $\Omega$ and $\gamma_1(t)$ and $\gamma_2(t)$ be the two half $\mu$-geodesics parametrized by arc length such that $\gamma_1(0)=\gamma_2(0)=p$ and $\gamma_1(\R_+)\cup\gamma_2(\R_+)=\partial W$. Let $\left\{t_n\right\}_{n\in\N}\subset\R^+$ be an increasing divergent sequence and, for all $n\in\N $ sufficiently large, let $r_n=\gamma_1(t_n)$ and $s_n=\gamma_2(t_n)$ and $\gamma_{r_n}^{s_n} \subset W$ be the $\mu$-geodesic that joins $r_n$ and $s_n$ and is closest to $p.$ We denote by $T_n$ the $\mu$-geodesic triangle with vertices $p,\,r_n$ and $s_n$ such that $\gamma_{r_n}^{s_n}\subset\partial T_n$. Notice that there can be more than one such $\mu$-geodesic satisfying this hypothesis, but we can arbitrarily choose one of them. With this choice for $\gamma_{r_n}^{s_n}$, any $\mu$-geodesic connecting $r_n$ and $s_n$ cannot lie in the interior of $T_n$. 
		
		We denote by $a_n$ (resp. $b_n$) the point of $\Omega\cap\gamma_{r_n}^{s_n}$ closest to $r_n$ (resp. $s_n$) and by $\Gamma_n$ the $\mu$-geodesic closest to $p$ joining $a_n$ and $b_n$ (notice that $\Gamma_n$ and $\gamma_{r_n}^{s_n}$ could be distinct). Finally we call $\Omega_n$ the domain bounded by $\partial\Omega\cap T_n$ and $\Gamma_n$ (see Figure \ref{Fig:Wedge-bar}).
		\begin{figure}[h!]
			\centering
			\includegraphics[scale=0.4]{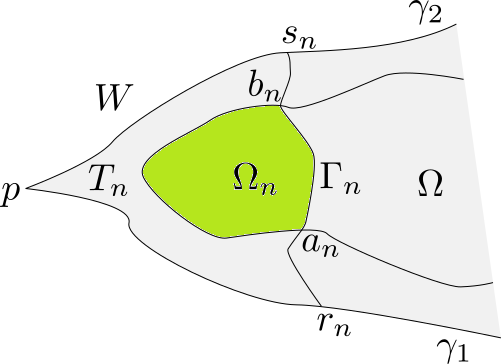}
			\caption{Sequence of domains in the $\mu$-wedge}
			\label{Fig:Wedge-bar}
		\end{figure}
		
		Since $U$ is discrete, we can assume that $\varphi$ is continuous at $a_n$ and $b_n$. 
		Notice that, by construction, \cite[Theorem 6.4]{DMN} implies that in each $\Omega_n\neq\emptyset$ we can find a minimal graph $\omega_n^\pm$ such that $\omega_n^\pm=\varphi$ in $\partial\Omega\cap T_n$ and diverges to $\pm \infty$ approaching $\Gamma_n.$
		Now we build a sequence of solution as in \cite[Theorem 4.3]{NeSaETo17}. On $\partial\Omega_n$, we consider a piecewise continuous function $\varphi_n$ such that it is continuous on $\Gamma_n$, with values between $\varphi(a_n)$ and $\varphi(b_n)$ and
		\begin{equation*}
			\varphi_n(q)=\begin{cases}
				\begin{array}{lr}
					\varphi(q)&\textrm{if }q\in\partial\Omega_n\setminus\Gamma_n;\\
					\varphi(a_n)&\textrm{if }q=a_n;\\
					\varphi(b_n)&\textrm{if }q=b_n.
				\end{array}
			\end{cases}
		\end{equation*}
		As $\Omega_n$ is bounded and $\mu$-convex and $\varphi_n$ is piecewise continuous, \cite[Theorem 6.4]{DMN} guarantees	the existence of a minimal extension $u_n$ of $\varphi_n$ on $\Omega_n$. We recall that for any discontinuity point $q\in\partial\Omega$, the boundary of the graph of each $u_n$ contains part of the fiber above $q$ with endpoints the left and the right limit of $\varphi$ at $q$. Moreover, there are no other points of the closure of the graph of $u_n$ on the vertical geodesic passing through the discontinuity points. 
		Let $n_0$ be the smaller natural number such that $\Omega_{n_0}\neq\emptyset.$ The Maximum Principle implies that $\omega_{n_0}^+\geq u_m\geq\omega_{n_0}^-$ for all $m>n_0.$ Then, using the Compactness Theorem, we can take a subsequence $\left\{u_{n_0,m}\right\}_m$ converging to a function $\tilde{u}_{n_0}$ in $\Omega_{n_0}.$ For any $n>n_0$, using $\omega_n^+$ and $\omega_n^-$ as barriers, we can solve the problem in $\Omega_n$ taking, by induction, a subsequence $\left\{u_{n,m}\right\}_m$ of $\left\{u_{n-1,m}\right\}_m$ converging to the function $\tilde{u}_n.$ By construction, $\tilde{u}_m=\tilde{u}_n$ in $\Omega_n$ for any $m>n$, that is, $\tilde{u}_m$ is the analytic extension of $\tilde{u}_n$ in $\Omega_m.$ Thus, $u=\lim_{n\to\infty}\tilde{u}_u$ will be the solution that we are looking for. 
		
		\textbf{Case 2:} $\Omega\subseteq S$.
		Let $\gamma_1(t)$ and $\gamma_2$ be the $\mu$-convex curves parametrized by arc length such that $\partial S=\gamma_1(\R)\cup\gamma_2(\R)$. Let $\left\{t_n\right\}_{n\in\N}\subset\R^+$ be an increasing divergent sequence and for any $n>0$ we call $\eta_n^l\subset S$ (resp. $\eta_n^r\subset S$) the $\mu$-geodesic that minimizes the distance between $\gamma_1(-t_n)$ and $\gamma_2(-t_n)$ (resp. $\gamma_1(t_n)$ and $\gamma_2(t_n)$) and denote by $Q_n$ the quadrilateral domain bounded by $\gamma_1([-t_n,t_n])\cup \gamma_1([-t_n,t_n])\cup\eta_n^l\cup\eta_n^r.$ Let $a_n$ (resp. $d_n$) be the point in $\eta_n^l$ closest to $\gamma_1$ (resp. $\gamma_2$) and $b_n$ (resp. $c_n$) be the point in $\eta_n^r$ closest to $\gamma_1$ (resp. $\gamma_2$). We denote by $\Gamma_n^l $ (resp. $\Gamma_n^r$) the $\mu$-geodesic closest to $\eta_n^l$ (resp. $\eta_n^l$) joining $a_n $ and $d_n$ (resp. $b_n$ and $c_n$) and call $\Omega_n$ the domain bounded by $\Gamma_n=\Gamma_n^l\cup\Gamma_n^r$ and $\partial\Omega\cap Q_n$ (see Figure \ref{Fig:Strip-bar}). 
		
		\begin{figure}[h!]
			\centering
			\includegraphics[scale=.75]{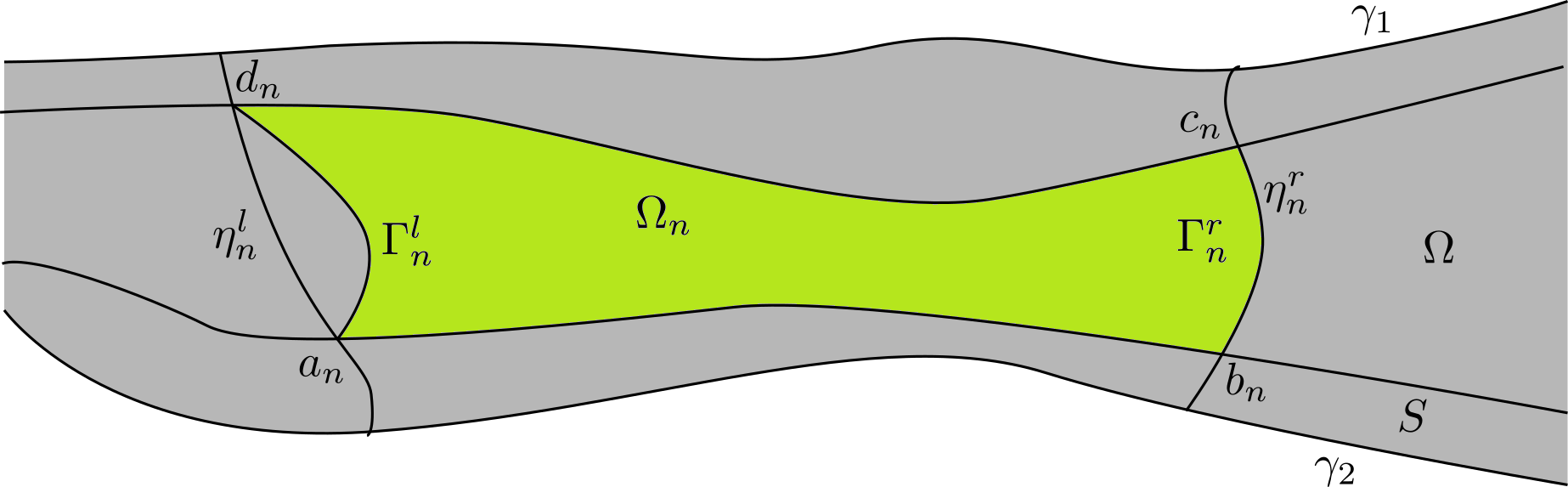}
			\caption{Sequence of domains in the $\mu$-strip}
			\label{Fig:Strip-bar}
		\end{figure}
		Since the $\mu$-metric in $S$ is admissible, there exists $n_0\in\N$ such that $\Omega_n$ satisfies the hypothesis of \cite[Theorem 6.4]{DMN} for any $n>n_0$. Therefore, there exist functions $\omega_n^\pm$ satisfying the minimal surface equation such that $\omega_n^\pm=\varphi$ in $\partial\Omega\cap Q_n$ and diverging to $\pm \infty$ as we approach $\Gamma_n.$ 
		
		Now we can proceed as in the case of the wedge: since $U$ is discrete, without loss of generality, we can suppose that $\varphi$ is continuous at $a_n,b_n,c_n,d_n$. On the boundary of	$\Omega_n$, we consider a piecewise continuous function $\varphi_n$, continuous on $\Gamma_{n}^l$, with values between $\varphi(a_n)$ and $\varphi(d_n)$, and on $\Gamma_{n}^r$, with values between $\varphi(b_n)$ and $\varphi(c_n)$,  such that
		\begin{equation*}
			\varphi_n(q)=\begin{cases}
				\begin{array}{lr}
					\varphi(q)&\textrm{if }q\in\partial\Omega\cap Q_n;\\
					\varphi(a_n)&\textrm{if }q=a_n;\\
					\varphi(b_n)&\textrm{if }q=b_n;\\
					\varphi(c_n)&\textrm{if }q=c_n;\\
					\varphi(d_n)&\textrm{if }q=d_n.
				\end{array}
			\end{cases}
		\end{equation*}
		For any $n$ sufficiently large, we denote by $u_n$ the solution to the Dirichlet problem for minimal surface equation in $\Omega_n$ such that $u_n=\varphi_n$ in $\partial\Omega_n.$ By construction, for any $n,m\in\N$ with $m>n$, the Maximum Principle implies that $\omega_n^+\geq u_m\geq\omega_n^-$ in $\Omega_n.$ From this point on we can use the same argument as in Case 1 to conclude the proof.
	\end{proof}
	\begin{remark}
		Notice that, without assuming the admissibility of $W$ (resp. $S$), the sequence of $\mu$-geodesic triangles $T_n$ (resp $\mu$-quadrilaterals $Q_n$) satisfying the hypothesis of \cite[Theorem 6.4]{DMN} does not cover the $\mu$-wedge (resp. the $\mu$-strip). 
	\end{remark}
	\begin{remark} \label{rem:ends}
		Notice that, to construct the sequence of minimal graphs that converge to our solution and the barriers that guarantee the convergence we use Theorem \ref{thm:JS}, which does not need $\Omega$ to be neither $\mu$-convex nor simply connected. Furthermore, since the $\mathcal{C}^0$ global uniform uniform bound is constructed solving the Jenkins--Serrin problem on relatively compact subset of $\Omega$, we can relax the hypothesis of $\Omega$ in Theorem \ref{thm:existence-unbDom} as follows:
		
			\emph{Let $\Omega\subset M$ be such that $\partial\Omega$ has non-negative $\mu$-geodesic curvature with respect to $\Omega$,  $\Omega$ has a countable numbers of ends and each end of $\Omega$ is contained either in an admissible $\mu$-strip or in an admissible $\mu$-wedge of angle $\alpha<\pi$.}
	\end{remark}
	\begin{remark}
		As in \cite[Remark 4.4 (C)]{NeSaETo17}, if we assume $F_0$ to be minimal, when the boundary value $\varphi$ is bounded above (respectively below) by a constant $M$, then the solution given by our proof is also bounded above (respectively below) by the same constant $M$. Furthermore, if $\varphi$ is bounded both above and below, a global barrier is given by a vertical translation of $F_0$ and the solution that we find is bounded.
		
		In general we can say nothing about the uniqueness of solutions in $\mu$-wedges and $\mu$-strips. In the forthcoming section, we will establish a Maximum Principle at infinity (see Theorems \ref{thm:Collin-Krust-general} and \ref{thm:Collin-Krust-general-tau}), which represents the initial step towards demonstrating the uniqueness of the solution of the Dirichlet problem over unbounded domains.
	\end{remark}
	
	\section{General Collin-Krust type results} \label{Collin-Krust-results}
	
	This section is devoted to prove a result useful in the study of the uniqueness of a solution of the Dirichlet problem for the prescribed mean curvature equation over unbounded domains.
	
	Let $\Omega\subset M$ be an unbounded domain and fix $p\in M$ such that $\Omega\cap\mathrm{Cut}(p)=\emptyset,$ this assumption will allow us to use the co-area formula. We denote by $B_p(r)$ the geodesic ball in $M$ centered at $p$ of radius $r$ and for any $r>r_0$ such that $\Omega(r)=B_p(r)\cap \Omega\neq\emptyset$ we call $\Lambda(r)=\partial B_p(r)\cap\Omega.$
	The first result we prove provides an estimate of the growth of the difference of any two disjoint Killing graphs $u$ and $v$ over an unbounded domain $\Omega\subset M$, having the same prescribed mean curvature and boundary values.
	The theorem introduces three key functions: $M(r)$, $L(r)$, and $g(r)$;
	\begin{itemize}
		\item $M(r)$ measures the maximum difference between $u$ and $v$ over the curve $\Lambda(r)$.
		
		\item $L(r)$ is defined as $\int_{\Lambda(r)}\mu^2$. This integral reflects the growth rate of the curve $\Lambda(r)$ with density $\mu$ and will be referred to as \emph{expansion rate function}.
		
		\item $g(r)$ is defined as $\int_{r_0}^r\frac{ds}{L(s)}$ and measures the growth rate of $M(r)$ and will be referred to as \emph{growth rate function}.
	\end{itemize}
	The theorem states that if the function $g(r)$ tends to infinity as $r$ approaches infinity, the maximum difference between $u$ and $v$ over $\Lambda(r)$ grows at a rate that is at least comparable to the growth rate of $g(r)$.

	In Theorem \ref{thm:Collin-Krust-general-tau}, we employ the idea presented in \cite[Theorem 7]{MaNe17} to improve the estimate of Theorem \ref{thm:Collin-Krust-general} when one of the two surfaces is known. Specifically, we consider one of the graphs as a fixed zero section of the Killing Submersion. By doing so, we establish that the vertical growth of any Killing graph with zero boundary values and the same prescribed mean curvature $H_0$ as that of the zero section depends on the function $L(r)=\int_{\Lambda(r)}\frac{2\mu^2}{\sqrt{1+\mu^2(a^2+b^2)}}$. Here, the smooth functions $a$ and $b$ defined in the domain $\Omega$ contain information about the bundle curvature $\tau$, as expressed in Equation \eqref{eqn:taumu}, and the mean curvature of the zero section, as expressed in Equation \eqref{eq:MC}.
	
	\begin{theorem}\label{thm:Collin-Krust-general}
		Let $\Omega\subset M$ be an unbounded domain and assume that $p\in M$ is such that $\Omega\cap \mathrm{Cut}(p)=\emptyset.$ Assume also that $u,v\in\mathcal{C}^\infty(\Omega)$ satisfy $H(u)=H(v),$  $u>v$ in $\Omega$ and $u=v$ in $\partial\Omega.$ Let $$M(r)=\underset{\Lambda(r)}{\sup}|u-v|,\qquad L(r)=\int_{\Lambda(r)}\mu^2\quad\textrm{ and }\quad g(r)=\int_{r_0}^r \tfrac{ds}{L(s)}$$ for some $r_0>0.$ Then, 
		\[\liminf_{r\to\infty}\frac{M(r)}{g(r)}>0.\]
	\end{theorem}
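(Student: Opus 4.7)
The plan is to derive a differential inequality for a natural energy and integrate it against the monotonicity of $M$. Since $H(u) = H(v)$, equation~\eqref{eq:H} shows that the vector field
\[
 X := \frac{\mu^2 Gu}{W_u} - \frac{\mu^2 Gv}{W_v}
\]
is divergence-free on $\Omega$. Set $w := u - v > 0$ and $Y := Gu/W_u - Gv/W_v$. Lemma~\ref{lemma:factorization} applied with $Z := Gu - Gv = \nabla w$ gives the non-negative integrand
\[
 f := \langle \nabla w, X\rangle = \mu^2 \langle Y, Z\rangle = \tfrac{W_u+W_v}{2}\|N_u - N_v\|^2 \ge 0,
\]
so the energy $E(r) := \int_{\Omega(r)} f\,dA$ is non-negative and non-decreasing, and $E'(r) = \int_{\Lambda(r)} f\,ds$ by the coarea formula (this is where the hypothesis $\Omega \cap \mathrm{Cut}(p) = \emptyset$ enters).

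Next, I would apply the divergence theorem to $wX$ on $\Omega(r) = \Omega \cap B_p(r)$; since $w \equiv 0$ on $\partial\Omega$ this yields
\[
 E(r) = \int_{\Lambda(r)} w\,\langle X, \nu\rangle\,ds,
\]
with $\nu$ the outer unit normal to $\Lambda(r)$. Lemma~\ref{MaxPrin} applied on $\Omega(r)$ gives $w \le M(r)$ throughout $\Omega(r)$, so in particular $M$ is non-decreasing. To control $\langle X, \nu\rangle$ pointwise, I would use that the horizontal components of $N_u$ and $N_v$ have norms $\mu\|Gu\|/W_u$ and $\mu\|Gv\|/W_v$ (as read off from the computation leading to~\eqref{eq:H}), so that $\|(N_u-N_v)^h\|^2 = \mu^2\|Y\|^2$; together with $\|N_u-N_v\|^2 \ge \|(N_u-N_v)^h\|^2$ and Lemma~\ref{lemma:factorization}, this yields $\|X\|^2 \le \mu^2 f$, hence $|\langle X,\nu\rangle| \le \mu\sqrt{f}$. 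Cauchy--Schwarz on $\Lambda(r)$ then produces
\[
 E(r) \;\le\; M(r)\int_{\Lambda(r)} \mu\sqrt{f}\,ds \;\le\; M(r)\sqrt{L(r)\,E'(r)}.
\]

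Squaring gives $E(r)^2 \le M(r)^2 L(r)\,E'(r)$, equivalently $-(1/E)'(r) \ge 1/(M(r)^2 L(r))$. Integrating from $r_0$ to $r$ (choosing $r_0$ so that $E(r_0) > 0$) and using the monotonicity of $M$ to bound $M(s) \le M(r)$ for $s \in [r_0, r]$,
\[
 \frac{1}{E(r_0)} \;\ge\; \int_{r_0}^r \frac{ds}{M(s)^2 L(s)} \;\ge\; \frac{g(r)}{M(r)^2},
\]
so that $M(r)^2 \ge E(r_0)\,g(r)$. This already settles the case $g(\infty) < \infty$. The main obstacle will be the case $g(\infty) = \infty$, where the bound above yields only $M \gtrsim \sqrt{g}$: upgrading this to the claimed sharp $\liminf M/g > 0$ is the delicate step, to be handled either by letting $r_0 = r_0(r)$ depend on $r$ so that $E(r_0)$ balances $g(r)$, or by bootstrapping the preliminary $\sqrt{g}$-estimate back into the ODE integration and exploiting the global integrability $\int_{r_0}^\infty ds/(M^2 L) \le 1/E(r_0)$.
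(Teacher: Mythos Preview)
Your derivation of $M(r)^2\ge E(r_0)\,g(r)$ is correct and runs parallel to the paper's argument. The genuine gap is the upgrade from $\sqrt g$ to $g$: neither of your two suggested bootstraps can succeed using only the information you have extracted. After the change of variable $t=g(r)$, your inequality $-(1/E)'\ge 1/(M^2L)$ together with monotonicity of $M$ becomes $(-1/\tilde E)'(t)\ge 1/\tilde M(t)^2$ with $\tilde M$ non-decreasing; but the pair $\tilde M(t)=t^{3/4}$, $\tilde E(t)=\tfrac14 t^{1/2}$ satisfies this (indeed $(-1/\tilde E)'=2t^{-3/2}\ge t^{-3/2}=1/\tilde M^2$), satisfies the derived bound $\tilde M(t)^2\ge\tilde E(t_0)(t-t_0)$ for every $0<t_0<t$, and has $\int^\infty\tilde M^{-2}<\infty$, yet $\tilde M(t)/t\to 0$. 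So no purely ODE-and-integrability bootstrap will close the argument.

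What is missing is a \emph{second} use of the boundary condition $u=v$ on $\partial\Omega$, namely a uniform lower bound on the flux of $X$ across $\Lambda(r)$. Since $X$ is divergence-free, $\bigl|\int_{\Lambda(r)}\langle X,\nu\rangle\bigr|=\bigl|\int_{\partial\Omega\cap\partial\Omega(r)}\langle X,\nu\rangle\bigr|$. Along $\partial\Omega$ the vector $Gu-Gv=\nabla w$ is nonzero (except at isolated points) and points into $\Omega$, because $w>0$ inside and $w=0$ on $\partial\Omega$; Lemma~\ref{lemma:factorization} then gives $\langle X,Gu-Gv\rangle>0$ there, so $\langle X,\nu\rangle$ has a fixed sign on $\partial\Omega$ and the boundary integral is monotone in $r$. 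Hence $\eta(r):=\int_{\Lambda(r)}\Norm{X}\ge\bigl|\int_{\Lambda(r)}\langle X,\nu\rangle\bigr|\ge n$ for some constant $n>0$ and all large $r$. Cauchy--Schwarz on $\Lambda(r)$ now yields $E'(r)\ge\eta(r)^2/L(r)\ge n^2 g'(r)$, whence $E(r)\ge n^2\bigl(g(r)-g(r_0)\bigr)$. At this point your option~(a) does work: choosing $r_2$ with $g(r_2)=\tfrac12\bigl(g(r)+g(r_0)\bigr)$ in $M(r)^2\ge E(r_2)\bigl(g(r)-g(r_2)\bigr)$ gives $M(r)\ge\tfrac{n}{2}\bigl(g(r)-g(r_0)\bigr)$, and the conclusion follows. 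This flux lower bound is exactly the extra ingredient supplied in the paper's proof.
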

	
	\begin{proof}
		Denote by $\rho(r)=\int_{\Omega(r)}\Norm{\frac{\mu Gu}{W_u}-\frac{\mu Gv}{W_v}}^2$. The fact that $u-v>0$ in $\Omega$ implies that there exists $\underline{r}>0$ such that $\rho(r_0)>0$, for any $r_0>\underline{r}$. 
		Let us define $\eta(r)=\int_{\Lambda(r)}\mu\Norm{\frac{\mu Gu}{W_u}-\frac{\mu Gv}{W_v}}$. Using Lemma \ref{lemma:factorization}, the divergence theorem, and the fact that $\Norm{N_u-N_v}\geq\Norm{\frac{\mu Gu}{W_u}-\frac{\mu Gv}{W_v}}$, fixing any $r_0> \underline{r}$, we get the following estimate:
		\begin{equation}\label{first-inequality}
			\begin{array}{lcl}
				M(r)\eta(r)&\geq&\int_{\Lambda(r)}(u-v)\mu\Norm{\frac{\mu Gu}{W_u}-\frac{\mu Gv}{W_v}}=\int_{\partial \Omega(r)}(u-v)\mu\Norm{\frac{\mu Gu}{W_u}-\frac{\mu Gv}{W_v}}\\
				&\geq&  \int_{\partial \Omega(r)}(u-v)\prodesc{\frac{\mu^2 Gu}{W_u}-\frac{\mu^2 Gv}{W_v}}{\chi}\\
				&=&\int_{\Omega(r)}\Div\left((u-v)\left(\frac{\mu^2 Gu}{W_u}-\frac{\mu^2 Gv}{W_v}\right)\right)\\
				&=&\int_{\Omega(r)}\prodesc{\nabla u-\nabla v}{\frac{\mu^2 Gu}{W_u}-\frac{\mu^2 Gv}{W_v}}\\&=&\int_{\Omega(r)}\frac{W_u+W_v}{2}\Norm{N_u-N_v}^2\\
				&=&\rho(r_0)+\int_{\Omega(r)\setminus \Omega(r_0)}\frac{W_u+W_v}{2}\Norm{N_u-N_v}^2\\
				&\overset{\text{(1)}}{\geq}&\rho(r_0)+\int_{r_0}^r\left(\int_{\Lambda(s)}\Norm{\frac{\mu Gu}{W_u}-\frac{\mu Gv}{W_v}}^2\,dv_g\right)ds\\&\overset{\text{(2)}}{\geq}&\rho(r_0)+\int_{r_0}^r\frac{\eta^2(s)}{L(s)}ds,
			\end{array}
		\end{equation}
		where $\chi$ denotes a unit co-normal vector field to $\Omega(r)$ along its boundary. The inequality (1) follows by applying the co-area formula with respect to the Riemannian distance and noticing that both $W_u$ and $W_v$ are greater than or equal to 1 and that $\frac{\mu Gu}{W_u}$ is the horizontal part of $N_u$, that, $$\Norm{N_u-N_v}^2=\Norm{\frac{\mu Gu}{W_u}-\frac{\mu Gv}{W_v}}^2+\left(\frac{1}{W_u}-\frac{1}{W_v}\right)^2.$$ Inequality (2) follows from the Cauchy-Schwarz inequality
		\[\left(\int_{\Lambda(s)} f\cdot g\, dv_g\right)^2\leq \int_{\Lambda(s)} f^2\, dv_g\int_{\Lambda(s)} g^2\, dv_g\]
		with $f=\Norm{\frac{\mu Gu}{W_u}-\frac{\mu Gv}{W_v}}$ and $g=\mu$.
		Since $g(r)=\int_{r_0}^r \tfrac{ds}{L(s)},$ we get that
		\begin{equation}\label{first-inequality-consecuence}
			M(r)\eta(r)\geq\rho(r_0)+\int_{r_0}^r g'(s)\eta^2(s)ds
		\end{equation} 
		for all $r\geq r_0$.
		
		The Maximum Principle implies that the function $r\mapsto M(r)$ does not decrease. Given $r_1>r_0$, let us write $a=M(r_1)$, so $a\eta(r)\geq M(r)\eta(r)$ for all $r_0<r<r_1$. Hence, $\eta$ satisfies the integral inequality $$\eta(r)\geq\frac{\rho(r_0)}{a}+\frac{1}{a}\int_{r_0}^{r}g'(s)\eta^2(s)ds.$$ Let us define the function $\zeta\colon[r_0,R)\to\R$ as
		\begin{equation}\label{zeta-function}
			\begin{array}{lcr}
				\zeta(r)=\frac{a\rho(r_0)}{2a^2-\rho(r_0)\left[g(r)-g(r_0)\right]},
			\end{array}
		\end{equation}
		where $R>r_1$ is defined as $R=g^{-1}\left(\frac{2a^2}{\rho(r_0)}+g(r_0)\right)$ if $\left(\frac{2a^2}{\rho(r_0)}+g(r_0)\right)\in \mathrm{Im}(g)$ and $R=+\infty$ otherwise. 
		We may assume without loss of generality that $R<+\infty$, for otherwise $g$ would be bounded above, in which case the result trivially holds. Indeed, if $g$ is bounded, $\liminf_{r\to\infty}\frac{M(r)}{g(r)}=0$ if and only if $\liminf_{r\to\infty} M(r)=0$, but this contradicts the Maximum Principle.
		
		Now, observe that \[\zeta'(r)=\frac{a\rho(r_0)^2g'(r)}{(2a^2-\rho(r_0)(g(r)-g(r_0)))^2}=\tfrac{1}{a}\zeta(r)^2g'(r),\] whence \[\zeta(r)=\frac{\rho(r_0)}{2a}+\frac{1}{a}\int_{r_0}^rg'(s)\zeta(s)^2ds.\] Since $\eta(r_0)>\zeta(r_0)$ and $\eta'(r)\geq g'(r) \eta(r)^2$, while $\zeta'(r)= g'(r) \zeta(r)^2$, Chaplygin's theorem implies that $\eta>\zeta$ in $[r_0,r_1]$. 
		Since $\zeta(r)$ diverges to $+\infty $ for $r\to\R$, while $\eta(R)<+\infty$, we get that $r_1<R=g^{-1}\left(\frac{2a^2}{\rho(r_0)}+g(r_0)\right)$. Since $g$ is strictly increasing, this is equivalent to 
		\begin{equation}\label{M(r)-inequality}
			g(r_1)\leq g(R)=\frac{2M(r_1)^2}{\rho(r_0)}+g(r_0)\,\iff \, M(r_1)\geq\sqrt{\frac{\rho(r_0)}{2}\left[g(r_1)-g(r_0)\right]}
		\end{equation}
		for any $r_1>r_0$.
		
		We claim that the function $\eta$ is bounded away from zero at infinity. Note that, for $r>r_0$,
		\begin{equation}\label{etabound}
			\begin{array}{lcl}
				\eta(r)&\geq&\left|\int_{\Lambda(r)}\prodesc{\frac{\mu^2 Gu}{W_u}-\frac{\mu^2 Gv}{W_v}}{\chi}\right|\\
				&\geq&\left|\int_{\partial \Omega(r)}\prodesc{\frac{\mu^2 Gu}{W_u}-\frac{\mu^2 Gv}{W_v}}{\chi}-\int_{\partial \Omega(r)\setminus\Lambda(r)}\prodesc{\frac{\mu^2 Gu}{W_u}-\frac{\mu^2 Gv}{W_v}}{\chi}\right|.
			\end{array}
		\end{equation}
		The first integral of the right hand side of~\eqref{etabound} vanishes by Stokes' Theorem. So the result follows by proving that $\int_\Gamma\prodesc{\frac{\mu^2 Gu}{W_u}-\frac{\mu^2 Gv}{W_v}}{\chi}$ has constant sign on any arc $\Gamma$ contained in $\partial \Omega,$ as in \cite{MaNe17}. Notice that $Gu-Gv=\nabla u-\nabla v\neq 0$ along $\partial \Omega$, except at isolated points, because $u-v\geq 0$ in $\Omega$ by assumption. In particular, $Gu-Gv$ is oriented toward $\Omega$, where it is not zero. Hence, $Gu- Gv$ can be used to orient $\partial \Omega$. Then, if $\mu^2\prodesc{\frac{Gu}{W_u}-\frac{Gv}{W_v}}{Gu-Gv}$ has constant sign along $\partial \Omega$, the same holds for $\prodesc{\mu^2\frac{Gu}{W_u}-\mu^2\frac{Gv}{W_v}}{\chi}$. By Lemma \ref{lemma:factorization},
		\[\mu^2\prodesc{\frac{Gu}{W_u}-\frac{Gv}{W_v}}{Gu-Gv}=\frac{1}{2}(W_u+W_v)\Norm{N_u-N_v}^2\]
		is positive at any point where $Gu-Gv$ is not zero. Then there exists a constant $n$ such that 	$\eta(r)\geq\int_{\Gamma}\mu^2\prodesc{\frac{Gu}{W_u}-\frac{Gv}{W_v}}{\chi}\geq n>0$, which proves the claim.
		
		For any $r_2>r_0$, we deduce that
		\begin{equation}\label{b-inequality}
			\begin{array}{ccl}
				\rho(r_2)&=&\int_{\Omega(r_2)}\Norm{\frac{\mu Gu}{W_u}-\frac{\mu Gv}{W_v}}^2\geq\int_{r_0}^{r_2}\left(\int_{\Lambda(s)}\Norm{\frac{\mu Gu}{W_u}-\frac{\mu Gv}{W_v}}^2\right)ds\\
				&\geq&\int_{r_0}^{r_2}\frac{\eta^2(s)}{L(s)}ds\geq n^2\int_{r_0}^{r_2}g'(s)ds\geq n^2\left[g(r_2)-g(r_0)\right].
			\end{array}
		\end{equation}
		Observe that $g(r_0)\leq\frac{g(r_1)-g(r_0)}{2}\leq g(r_1),$ so there is $r_2\in[r_0,r_1]$ such that $g(r_2)=\frac{g(r_1)-g(r_0)}{2}.$ Applying \eqref{M(r)-inequality} to $r_2$ instead of $r_0$ we get
		\begin{equation}
			\label{last-inequality}
			\begin{array}{rcl}
				M(r_1)&\geq&\sqrt{\frac{\rho(r_2)}{2}\left[g(r_1)-g(r_2)\right]}\geq \frac{n}{\sqrt{2}}\sqrt{\left[g(r_1)-g(r_2)\right]\left[g(r_2)-g(r_0)\right]}\\&=&\frac{n}{2\sqrt{2}}\left[g(r_1)-g(r_0)\right]
			\end{array}
		\end{equation}
		for all $r_1>r_0$. Finally, this means that 
		\begin{equation*}
			\liminf_{r\to\infty}\frac{M(r)}{g(r)}\geq\liminf_{r\to\infty}\left(\tfrac{n}{2\sqrt{2}}\left(1-\tfrac{g(r_0)}{g(r)}\right)\right)>0
		\end{equation*}
		and this concludes the proof.
	\end{proof}
	
	\begin{remark}
		Up to lose some information, we can take \[g(r)=\int_{r_0}^r\tfrac{ds}{T(s)^2\Length(\Lambda(s))},\] where $T(r)=\sup_{\Lambda(r)}\mu,$ to simplify the computation. Hence, if there exists a constant $C>0$ such that $\mu_{\mid\Omega}\leq C,$ then the growth function $g(r)$ will depend only on how the growth of $\Length(\Lambda(r))$, that is $g'(r)\geq \frac{1}{C^2\Length(\Lambda(r))}.$
	\end{remark}
	
	In the next example, when $\mu$ is bounded,  we find a sharper bound on the expansion rate function of a domain $\Omega$ which guarantees a divergent Collin-Krust type estimate. 
	Such domains exist, for instance, in $\mathbb{H}^2$ (see Figure \ref{fig:H2ex}). 	
	
	\begin{example} \label{ex:divfun}
		It is not difficult to prove that $g(x)$ does not diverge whenever $f(x)=\frac{1}{g'(r)}\geq c r(\log r)^{b+1} $ for some $b,c>0$, since 
		\begin{equation*}
			\int\frac{1}{c x(\log x)^{b+1}}dx=-\frac{1}{bc\left(\log x\right)^b}+cost,
		\end{equation*}
		which is bounded above. 
		Nevertheless, we can build a sequence of monotone functions  $\left\{f_n(x)\right\}_n$ such that, for all $n\geq0,$
		$$\underset{x\to+\infty}{\lim}\frac{f_{n+1}(x)}{f_n(x)}=+\infty,$$
		and $\int\frac{1}{f_n(x)}$ diverges for $x\to+\infty$,
		with $f_0(x)=x$.
		
		Define a sequence of function as follows:
		\begin{equation*}\label{sequnce}
			\begin{cases}
				a_0(x)=x;\\
				a_i(x)=\log(a_{i-1}(x))\qquad\textrm{for }i\geq1.
			\end{cases}
		\end{equation*}
		Now we define $F_n(x)=\underset{i=0}{\overset{n}\Pi}a_i(x)$ and a sequence of \emph{translation terms}
		\begin{equation*}
			\begin{cases}
				\label{translation-terms}
				x_t^0=0,\\
				x_t^i=e^{x_t^{i-1}},\qquad\textrm{for all }i\geq1.
			\end{cases}
		\end{equation*}
		Finally, we can define 
		\begin{equation}\label{loglog-function}
			f_n(x)=F_n(x+x_t^{n}).
		\end{equation} 
		It is easy to see that 
		\begin{equation}\label{gnfun}
			g_n(x-x_t^{n+1})=\int\frac{1}{f_n(x-x_t^{n+1})}dx=
			\begin{cases}\begin{array}{ll}
					\tilde{a}_0(x)=\int \frac{1}{a_0(x)}dx&\textrm{for }n=0,\\
					\tilde{a}_n(x)=\log(\tilde{a}_{n-1})&\textrm{for }n\geq1,
				\end{array}
			\end{cases}
		\end{equation}
		which diverges. Notice that the faster the domain expands, the smaller the growth function will be.
	\end{example}
	
	Using the Poincaré's disk model, that is 
	\[\mathbb{H}^2=\left(\left\{(x,y)\in\R^2|x^2+y^2<1\right\},\frac{4}{(1-x^2-y^2)^2}(dx^2+dy^2)\right),\] we can consider the convex domain $\Omega$ bounded by bounded by $\gamma_1\cup\gamma_2$,  where $\gamma_1,\gamma_2\colon[0,+\infty)\to\mathbb{H}^2$ are such that 
	\begin{equation*}
		\begin{array}{l}
			\gamma_1(t)=\left\{\tanh\left(\tfrac{t}{2}\right) \cos\left(\frac{(t + 1) \log(t + 1)}{\sinh(t)}\right), \tanh\left(\tfrac{t}{2}\right) \sin\left(\frac{(t + 1) \log(t + 1)}{\sinh(t)}\right)\right\},\\ \gamma_2(t)=\left\{\tanh\left(\tfrac{t}{2}\right),0\right\},
		\end{array}
	\end{equation*}  
	which has expansion rate function $\Length(\Lambda(r))=(r+1)\log(r+1)$.
	\begin{figure}[h!]
		\centering\label{fig:H2ex}
		\includegraphics[width=0.34\linewidth]{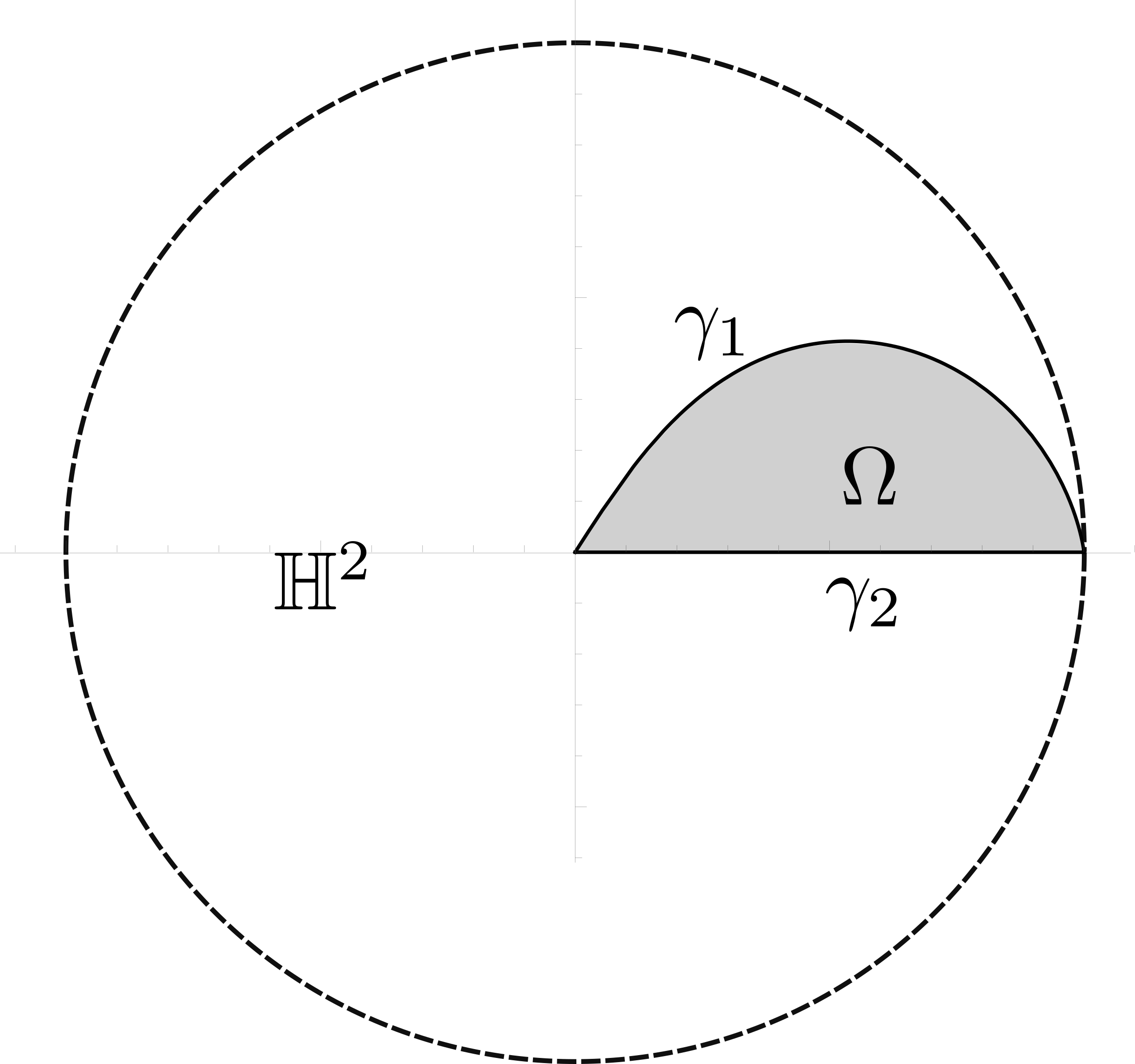}
		\caption{A domain in $\mathbb{H}^2$ whose expansion rate function is $(r+1)\log(r+1)=f_1(r)$.}
	\end{figure}
	
	In the next example we focus on the warped product $(\R^2,ds^2_{euc})\times_\mu \R$, where $\mu\in\mathcal{C}^\infty(\R^2)$ is a radial function, and we study under which assumptions on $\mu$ there exists more than one bounded solution to the same Dirichlet problem for minimal rotational Killing graphs. In particular, we see that if we choose $\mu$ in a way that guarantees a control on the expansion rate function $L(r)$, such that the growth rate function $g(r)$ does not diverge for $r\to+\infty$, then there exist two minimal graphs with the same boundary values and bounded height distance. 
	
	\begin{example}
		Let $\pi\colon\E\to M$ be a Killing Submersion such that $M$ is the euclidean plane, $\tau\equiv 0$ and $\mu(x,y)=\mu(\sqrt{x^2+y^2})$ is a smooth radial function and consider the following Dirichlet problem:
		\begin{equation*}
			\begin{cases}
				\begin{array}{ll}
					H(u)=0&\textbf{if }x^2+y^2>1;\\
					u(x,y)=0&\textbf{if }x^2+y^2=1.
				\end{array}
			\end{cases}
		\end{equation*}
		Since the domain does not contain the origin, we can consider in $\R^2\setminus\left\{(0,0)\right\}$ the polar coordinates, $$(\R^2,g_{euc}=dx^2+dy^2)\equiv\left(\R^+\times[0,2\pi),g_{pol}=dr^2+r^2d\theta\right).$$
		
		We look for non-negative solutions $u$ defined in $\Omega=\left\{(r,\theta)\in\R\times[0,2\pi]\mid r\geq 1\right\}$ such that $u(1)=0$. It is easy to see that $u(r,\theta)=u(r)$ is a radial solution of the minimal surface equation in $\Omega$ if and only if $\dot{u}(r)=\tfrac{\partial u}{\partial_r}(r)$ satisfies the following ODE:
		\begin{equation} \label{linearODE}
			\frac{\partial}{\partial r}\left( \frac{r\mu^2(r) \dot{u}(r)}{\sqrt{1+\mu^2(r) \dot{u}^2(r)}}\right)=0. 	
		\end{equation}
		Let us assume first that $\mu(r)=r$. Then the ODE becomes
		\[ \frac{\partial}{\partial r}\left( \frac{r^3 \dot{u}(r)}{\sqrt{1+r^2 \dot{u}^2(r)}}\right)=0 \]
		Now, the solution is easy to compute: for all $c\geq 1$, we get a one-parameter family of solutions $u_c(r)$ satisfying
		\[ \dot{u}_c(r)=\frac{1}{\sqrt{cr^6-r^2}}. \]
		Hence, $u_c\colon\Omega\to \R$ is given by $$u_c(r)=\int_1^r\frac{ds}{\sqrt{c s^6-s^2}}=\frac{1}{2}\arctan\left(\sqrt{c r^4-1}\right)-\frac{1}{2}\arctan\left(\sqrt{c-1}\right).$$
		Notice that $u_1(r)$ is the solution whose tangent at the boundary is vertical and can be extended by reflection to a minimal annulus. Notice also that for $c\to +\infty$, the solutions $u(r)$ converge to $u(r)\equiv0$. Finally, fixed $c\geq1$, \[0\leq\underset{r>1}{\sup}\,u(r)=\frac{\pi}{4}-\frac{1}{2}\arctan\left(\sqrt{c-1}\right)\leq\frac{\pi}{4}.\]
		
		Now, if $\mu(r)$ is an arbitrary radial function, the one-parameter family of solutions $u_c(r)$ of \eqref{linearODE} satisfy
		\begin{equation}
			\label{general-solution}
			\frac{\partial u_c}{\partial_r}(r)=\pm\frac{1}{\sqrt{cr^2\mu(r)^4-\mu(r)^2}}=\pm\frac{1}{r\mu(r)^2\sqrt{c-\frac{1}{r^2\mu(r)^2}}},
		\end{equation}
		depending on $c$.
		The comparison theorem for ODEs implies that \[\underset{r\to+\infty}{\lim}u_c(r)<+\infty\]
		whenever $\mu(r)$ grows faster than $\log(r)^{\frac{a}{2}}$ with $a>1$.
		In particular, we found a sufficient condition of $\mu$ that guarantees the existence of a family of minimal Killing graphs with the same boundary values and bounded Killing distance.
		
		Finally, using Example \ref{ex:divfun}, we show that assuming $\mu(r)>\sqrt{\log(r)}$ is not sufficient to guarantee the existence of two minimal graphs with bounded killing distance. Recall that the growth rate function is defined as \[g(r)=\int\frac{dr}{\int_{\Lambda(r)}\mu^2}=\int\frac{dr}{2\pi r\mu(r)^2}.\]
		Comparing with \eqref{gnfun}, for any fixed $n>1$, we can assume $\mu(r)=\sqrt{f_n(x)/r}$, where $\{f_n(x)\}_n$ is the sequence of function defined in \eqref{loglog-function}. Then, Theorem \ref{thm:Collin-Krust-general} implies that $\underset{r\to+\infty}{\lim}u_c(r)=+\infty$, while $\mu(r)>\sqrt{\log(r)}$ and $\underset{r\to\infty}{\lim}\frac{\mu(r)}{\sqrt{\log(r)}}=+\infty$.
	\end{example}
	
	We can apply Theorem \ref{thm:Collin-Krust-general} to study when and how the difference between two Killing graphs in $\Sol$  with the same mean curvature and the same boundary values diverges.
	Recall that $Sol_3$ can be described as a Killing submersion over $\H^2$, specifically as a warped product $\H^2\times_\mu\R$. We want to show that, unlike in $ \mathbb{H}^2 \times \R $, in $\Sol$ there are some wedges where the growth rate function diverges and it makes sense to calculate a Collin--Krust type estimate.
	
	\begin{example}
		The homogeneous manifold $\Sol$ is isometric to the warped product
		\begin{equation*}\label{Eq:SolMetric}
			\left(\left\{(x,y,z)\in\R^3\mid y> 0\right\},\frac{dx^2+dy^2}{y^2}+y^2 dz^2\right),
		\end{equation*}
		(see \cite{Nguyen}).
		In this setting,  it is easy to see that the function $u(x,y)=1-1/y$ defines a positive minimal graph in the unbounded domain of the hyperbolic plane $\left\{y>1\right\}$ that has zero boundary values and bounded height. Hence, in general, we can not expect to have a Collin--Krust type estimate in any domain. 
		
		Using the Mobius transformations, it easy to see that $\Sol$ is isometric to 
		\[\left(\mathbb{D}(1)\times \R, ds^2=\frac{4(\df x^2+\df y^2)}{(1-(x^2+y^2))^2}+\left(\frac{1-x^2-y^2}{(x-1)^2+y^2}\right)^2\df z^2\right).\]  That is, $\Sol$ is a Killing Submersion over $\mathbb{H}^2$ (described with the Poincaré Disk Model) with $\tau\equiv 0$ and $\mu(x,y)=\frac{1-x^2-y^2}{(x-1)^2+y^2}$. 
		
		Let us first define the type of domain in which we want to compute the estimate. Let $\theta_1,\theta_2\in(0,\pi)$ and for $t\in[0,1)$ define the geodesics $\gamma_1(t)=(t\cos\theta_1,t\sin\theta_1)$ and $\gamma_2(t)=(t\cos\theta_2,-t\sin\theta_2)$. We call \emph{$(\theta_1,\theta_2)$-wedge} the domain in $\mathbb{D}(1)$ bounded by $\gamma_1$, $\gamma_2$ and the asymptotic boundary $\gamma_3=(\cos\phi,\sin\phi),$ with $ \phi\in(\theta_1,2\pi-\theta_2)$.
		
		\begin{figure}[h!]
			\centering
			\includegraphics[width=0.34\linewidth]{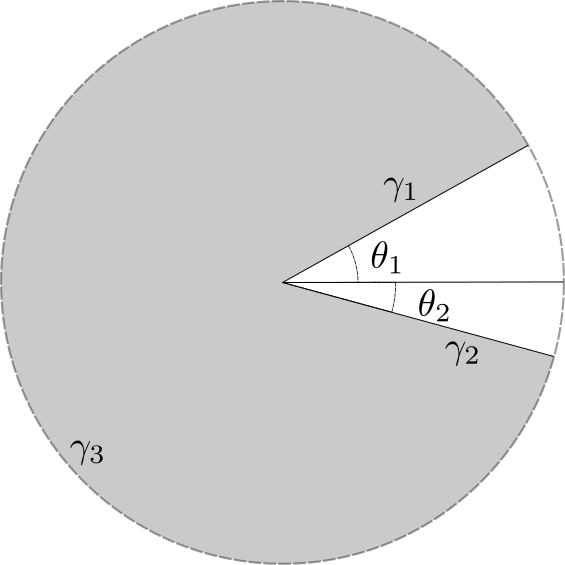}
			\caption{$(\theta_1,\theta_2)$-wedge.}
		\end{figure}
		
		Let $\Omega\subset\mathbb{H}^2$ be an unbounded domain contained within a $(\theta_1,\theta_2)$-wedge $W$ and $\Lambda(\rho)$ be the boundary of the geodesic ball of geodesic radius $\rho$ centered at the center of the Poincaré's disk contained in $\Omega$. Thus $\Length(\Lambda(\rho))\leq [2\pi-(\theta_1+\theta_2)]\sinh(\rho)$ and \[T(\rho)=\underset{\Lambda(\rho)}{\sup}\mu=\frac{1-\tanh(\rho)^2}{1+\tanh(\rho)^2-2\tanh(\rho)\cos(\theta)},\] where $\theta=\min\left\{\theta_1,\theta_2\right\}$, the minimum of the two angles $\theta_1$ and $\theta_2$ defining the wedge.
		As explained in Theorem \ref{thm:Collin-Krust-general}, $g(\rho)=\int_{\rho_0}^\rho\frac{dr}{\int_{\Lambda(r)}\mu^2},$ hence
		\begin{equation*}
			\begin{array}{rl}
				
				g'(\rho)\leq &\left(T^2(\rho)\Length(\Lambda(\rho)) \right)^{-1}=\frac{(1+\tanh(\rho)^2-2\tanh(r)\cos(\theta))^2}{[2\pi-(\theta_1+\theta_2)]\sinh(\rho)[1-\tanh(\rho)^2]^2}.
			\end{array}
		\end{equation*} 
		Integrating this inequality we have that 
		\begin{equation*}
			\begin{array}{rl}
				g(\rho)	\leq&\frac{1}{2\pi-(\theta_1+\theta_2)}\left[\tfrac{1}{2}\left(3+\cos(2\theta)\right)+\tfrac{1}{6}\left(2+\cos(2\theta)\right)\cosh(3\rho)\right.\\
				&\left.+\log\left(\tanh\left(\tfrac{\rho}{2}\right)\right)-2\cos(\theta)\sinh(\rho)-\tfrac{2}{3}\cos(\theta)\sinh(3\rho)\right]
			\end{array}
		\end{equation*}
		which diverges whenever $\theta>0$.
		
		Notice that, if $\Omega$ is an unbounded domain such that, for a $(\theta_1,\theta_2)$-wedge $W$, $\Omega\setminus W\neq\emptyset$ is compact  and $u\in C^\infty(\Omega)$ describes a minimal Killing graph with bounded boundary values, then the positive (resp. negative) part of $u-\underset{\Omega\cap W}{\max}\,u$ (resp. $u+\underset{\Omega\cap W}{\min}\,u$) is a minimal Killing graph with zero boundary values over a non-compact domain contained in $W$ and we can apply the previous estimate.	
	\end{example}
	
	To prove the last result of this section, we recall some details of the description of the local model of a Killing Submersion. In \cite{LerMan17}, it was proven that $\E$ is locally isometric to $(\mathcal{U}\times\R,ds^2)$ where $\mathcal{U}\subseteq\R^2$ and $ds^2=\lambda^2(dx^2+dy^2)+\mu^2[dz-\lambda(a dx+bdy)]^2$ for some $\lambda,a,b\in C^\infty(\mathcal{U})$, with $\lambda>0$, such that $\left(\mathcal{U},\lambda^2(dx^2+dy^2)\right)$ is a conformal parameterization of an unbounded and simply connected subset of $M$ and 
	\begin{equation}\label{eqn:taumu}
		\frac{2\tau}{\mu}=\frac{1}{\lambda^2}[(\lambda b)_x-(\lambda a)_y].
	\end{equation}
	Equation~\eqref{eqn:taumu} is the only condition on the choice of $a$ and $b$ to produce a Killing Submersion with bundle curvature $\tau$ and Killing length $\mu$. Nevertheless, the choice of $a$ and $b$ affects the mean curvature of the section $z=0$, applying \eqref{eq:H} to $u(x,y)\equiv 0$, it follows that:
	\begin{equation}\label{eq:MC}
		H_0(x,y)=\frac{1}{2\mu}\left[\partial_x\left(\frac{\mu^2\lambda a}{\sqrt{1+\mu^2(a^2+b^2)}}\right)+\partial_y\left(\frac{\mu^2\lambda b}{\sqrt{1+\mu^2(a^2+b^2)}}\right)\right].
	\end{equation}
	Furthermore, notice that if $d(x,y)$ is a $\mathcal{C}^2(\bar{\Omega})$ graph of mean curvature $H_d$, we can change $a,\, b$ for $\tilde{a},\, \tilde{b}$, where $\lambda(\tilde{a}-a)=d_x$ and $\lambda(\tilde{b}-b)=d_y$, and in this new model for $\E$ the section $z=0$ will have mean curvature $H_0=H_d$. 
	(For further information about local coordinates in Killing Submersions see \cite[Section 2.1]{LerMan17}, \cite[Section 2.2]{DLM} and \cite[Section 2.1]{DMN}).In resume, the functions $a$ and $b$ depends on the bundle curvature $\tau$ of the ambient metric and on the mean curvature of the surface $\left\{z=0\right\}.$ Furthermore, it is easy to compute that the area elemente of $\left\{z=0\right\}$ is exactly $\sqrt{1+\mu^2(a^2+b^2)}$. So, putting this information in \eqref{first-inequality}, we can prove the following theorem.
	
	\begin{theorem}\label{thm:Collin-Krust-general-tau}
		Let $\Omega\subset M$ be an unbounded domain ad assume that $p\in M$ is such that $\Omega\cap \mathrm{Cut}(p)=\emptyset.$ Assume also that $u\in\mathcal{C}^\infty(\Omega)$ satisfy $H_u=H_0,$  $u>0$ in $\Omega$ and $u=0$ in $\partial\Omega.$ Let $$M(r)=\underset{\Lambda(r)}{\sup}|u|,\quad L(r)=\int_{\Lambda(r)}\frac{2\mu^2}{\sqrt{1+\mu^2(a^2+b^2)}}\quad\textrm{ and }\quad g(r)=\int_{r_0}^r \tfrac{ds}{L(s)},$$ for some $r_0>0.$ Then, \[\liminf_{r\to\infty}\frac{M(r)}{g(r)}>0.\]
	\end{theorem}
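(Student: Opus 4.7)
The plan is to repeat the argument of Theorem \ref{thm:Collin-Krust-general} with $v\equiv 0$, but to sharpen the Cauchy--Schwarz step so that the weight $\mu^{2}$ appearing in the denominator of $L(r)$ is rescaled by the area element $W_{0}$ of the zero section. The starting point is the observation that, in the local model recalled before the statement, the zero section $\{z=0\}$ has generalized gradient $G0$ with $\|G0\|^{2}=a^{2}+b^{2}$, so its area element equals
\[
W_{0}=\sqrt{1+\mu^{2}\|G0\|^{2}}=\sqrt{1+\mu^{2}(a^{2}+b^{2})},
\]
and, by the assumption $H_{u}=H_{0}$, the pair $(u,0)$ satisfies the hypotheses of the previous theorem. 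Setting $\eta(r)=\int_{\Lambda(r)}\mu\,\|\tfrac{\mu Gu}{W_{u}}-\tfrac{\mu G0}{W_{0}}\|$ and $\rho(r_{0})=\int_{\Omega(r_{0})}\|\tfrac{\mu Gu}{W_{u}}-\tfrac{\mu G0}{W_{0}}\|^{2}$, I would first reproduce verbatim the chain \eqref{first-inequality} up to the co-area step, obtaining
\[
M(r)\eta(r)\geq\rho(r_{0})+\int_{r_{0}}^{r}\left(\int_{\Lambda(s)}\frac{W_{u}+W_{0}}{2}\|N_{u}-N_{0}\|^{2}\right)ds.
\]

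The key new input is to apply Cauchy--Schwarz by writing the integrand of $\eta(s)$ as
\[
\mu\,\Bigl\|\tfrac{\mu Gu}{W_{u}}-\tfrac{\mu G0}{W_{0}}\Bigr\|=\sqrt{\tfrac{W_{u}+W_{0}}{2}}\,\Bigl\|\tfrac{\mu Gu}{W_{u}}-\tfrac{\mu G0}{W_{0}}\Bigr\|\cdot\sqrt{\tfrac{2\mu^{2}}{W_{u}+W_{0}}},
\]
which yields
\[
\eta(s)^{2}\leq\left(\int_{\Lambda(s)}\frac{W_{u}+W_{0}}{2}\Bigl\|\tfrac{\mu Gu}{W_{u}}-\tfrac{\mu G0}{W_{0}}\Bigr\|^{2}\right)\left(\int_{\Lambda(s)}\frac{2\mu^{2}}{W_{u}+W_{0}}\right).
\]
Since $W_{u}\geq 0$ the second factor is bounded above by $\int_{\Lambda(s)}\tfrac{2\mu^{2}}{W_{0}}=L(s)$, and since $\|N_{u}-N_{0}\|^{2}\geq\|\tfrac{\mu Gu}{W_{u}}-\tfrac{\mu G0}{W_{0}}\|^{2}$ the first factor is dominated by $\int_{\Lambda(s)}\tfrac{W_{u}+W_{0}}{2}\|N_{u}-N_{0}\|^{2}$. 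Plugging these into the chain above I recover exactly the integral inequality
\[
M(r)\eta(r)\geq\rho(r_{0})+\int_{r_{0}}^{r}g'(s)\eta(s)^{2}\,ds,
\]
but now with the improved $g(r)=\int_{r_{0}}^{r}ds/L(s)$ of the present statement.

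From this point on the proof is identical to that of Theorem \ref{thm:Collin-Krust-general}: fix $r_{1}>r_{0}$, set $a=M(r_{1})$, use monotonicity of $M$ to linearize the inequality, compare $\eta$ with the barrier $\zeta(r)=a\rho(r_{0})/(2a^{2}-\rho(r_{0})[g(r)-g(r_{0})])$ via Chaplygin's theorem, and conclude $M(r_{1})\geq\sqrt{\tfrac{\rho(r_{0})}{2}[g(r_{1})-g(r_{0})]}$. The sign argument using Lemma \ref{lemma:factorization} to show that $\eta$ is bounded below by a positive constant $n>0$ then applies unchanged, and the two-step bootstrap at a radius $r_{2}\in[r_{0},r_{1}]$ with $g(r_{2})=\tfrac{1}{2}(g(r_{1})-g(r_{0}))$ upgrades the square-root estimate to the linear lower bound $M(r_{1})\geq\tfrac{n}{2\sqrt{2}}[g(r_{1})-g(r_{0})]$, whence $\liminf_{r\to\infty}M(r)/g(r)>0$.

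The main obstacle, and really the only nontrivial issue, is the Cauchy--Schwarz splitting: one must arrange the two factors so that, after using $\|N_{u}-N_{0}\|\geq\|\tfrac{\mu Gu}{W_{u}}-\tfrac{\mu G0}{W_{0}}\|$ on the ``geometric'' side and $W_{u}+W_{0}\geq W_{0}$ on the ``weight'' side, one reproduces exactly the differential inequality with $L(s)$ replaced by the sharper $\int_{\Lambda(s)}2\mu^{2}/W_{0}$. Everything else is a mechanical transcription of the proof of Theorem \ref{thm:Collin-Krust-general}, using the identification $W_{0}=\sqrt{1+\mu^{2}(a^{2}+b^{2})}$ read off the local model.
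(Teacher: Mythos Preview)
Your proposal is correct and follows essentially the same route as the paper. The paper first drops $W_u$ pointwise to replace $\tfrac{W_u+W_0}{2}$ by $\tfrac{W_0}{2}=\tfrac{1}{2}\sqrt{1+\mu^2(a^2+b^2)}$ and only then applies Cauchy--Schwarz with that weight, whereas you apply Cauchy--Schwarz with the full weight $\tfrac{W_u+W_0}{2}$ and then bound $W_u+W_0\ge W_0$ in the ``weight'' factor; the two orderings produce the same integral inequality \(M(r)\eta(r)\ge\rho(r_0)+\int_{r_0}^r \eta(s)^2/L(s)\,ds\), after which both proofs defer verbatim to the argument of Theorem~\ref{thm:Collin-Krust-general}. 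One small caution: in the bootstrap step you invoke at the end, make sure the quantity playing the role of $\rho(r_2)$ is the weighted one $\int_{\Omega(r_2)}\tfrac{W_u+W_0}{2}\|N_u-N_0\|^2$ (or at least $\int_{\Omega(r_2)}\tfrac{W_0}{2}\|\cdot\|^2$), since with the unweighted $\rho$ the inequality $\rho(r_2)\ge\int_{r_0}^{r_2}\eta^2/L$ does not follow directly from your Cauchy--Schwarz splitting; this is a cosmetic adjustment and the paper glosses over it as well.
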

	\begin{proof}
		We do a slight modification of the proof of Theorem \ref{thm:Collin-Krust-general}, starting from Equation \eqref{first-inequality}. Then, recalling that $W_u\geq 1$ and $W_0=\sqrt{1+\mu^2(a^2+b^2)}\geq1$ we get
		\begin{equation}\label{first-inequality-tau2}
			\begin{array}{lcl}
				M(r)\eta(r)&\geq&\int_{\Omega(r)}\frac{W_u+W_0}{2}\Norm{N_u-N_0}^2\\
				&>&\rho+\int_{\Omega(r)\setminus \Omega(r_0)}\frac{\sqrt{1+\mu^2(a^2+b^2)}}{2}\Norm{\frac{\mu Gu}{W_u}-\frac{\mu Z}{W_0}}^2\\
				&\overset{(1)}{\geq}&\rho+\int_{r_0}^r\left(\int_{\Lambda(s)}\frac{\sqrt{1+\mu^2(a^2+b^2)}}{2}\Norm{\frac{\mu Gu}{W_u}-\frac{\mu Z}{W_0}}^2\right)ds\\
				&\overset{(2)}{\geq}&\rho+\int_{r_0}^r\frac{\eta^2(s)}{L(s)}ds,
			\end{array}
		\end{equation}
		where we have used the co-area formula in (1) and the Cauchy-Schwarz inequality in (2). From this point the argument is the same as the one in the proof of Theorem \ref{thm:Collin-Krust-general}.
	\end{proof}
	
	To conclude this section, we show how Theorem \ref{thm:Collin-Krust-general-tau} can be applied to the space $\E(-1,\tau)$, providing a better estimate than the one given in \cite[Theorem 5.1]{LeRo09}. In the case of unbounded domains where $\Lambda(\rho)$ is uniformly bounded, the result of Leandro and Rosenberg \cite[Theorem 5.1]{LeRo09} states that for every choice of $\tau$ and $H$, the distance between two surfaces which have the same mean curvature grows at least as $\rho$. We will show that if $H=1/2$, the distance between two graphs having the same boundary values grows as $e^{\tfrac{\rho}{2}}$. We also show that, if we consider exterior domains, for any choice of $\tau$ and $H\in[0,1/2]$, the function $g(\rho)$ is not divergent.  
	
	\begin{example}
		Consider for $\E(-1,\tau)$  the global model given by 
		\[\left(\mathbb{D}(1)\times \R,ds^2=\lambda^2(dx^2+dy^2)+[2\tau\lambda(ydx-xdy)+dz]^2\right),\]
		where $\lambda=\frac{2}{1-(x^2+y^2)}$. In this model $a(x,y)=-2\tau y$ and $b(x,y)=2\tau x $. If we call $r=\sqrt{x^2+y^2}$, the geodesic distance of a point $(x,y)\in\mathbb{D}(1)$ from the center of the disk is given by $\rho=2\tanh^{-1}(r)$.
		
		In \cite{Pe12}, Peñafiel shows that an entire rotationally invariant graph of constant mean curvature $H\in[0,1/2]$ is parameterized as $(\tanh(\rho/2)\cos\theta,\tanh(\rho/2)\sin\theta,u(\rho))$, where $u(\rho)$ satisfies \[u'(\rho)=\frac{\left(2H\cosh(\rho)-2H\right)\sqrt{1+4\tau^2\tanh^2(\rho/2)}}{\sqrt{\sinh^2(\rho)-(2H\cosh(\rho)-2H)^2}}.\]
		Hence,
		\[\tfrac{\partial}{\partial r}u(\rho(r))=u'(\rho(r)) \frac{\partial\rho}{\partial r}=\frac{4Hr\sqrt{1+4\tau^2r^2}}{(1-r^2)\sqrt{1-4H^2r^2}}.\] 
		Now, in order to have an estimate for the vertical distance between an $H$-graph $\Sigma_H$ and the rotational entire $H$-graph $P_H$ described by Peñafiel, we have to compute 
		\begin{align*}
			\tilde{a}(x,y)=\frac{u_r(\sqrt{x^2+y^2})}{\lambda}\frac{\partial r}{\partial x}+a(x,y),\quad& \tilde{b}(x,y)=\frac{u_r(\sqrt{x^2+y^2})}{\lambda}\frac{\partial r}{\partial y}+b(x,y).
		\end{align*}
		An easy computation implies,
		\begin{align*}
			\tilde{a}(x,y)=2Hx\sqrt{\frac{1+4\tau^2(x^2+y^2)}{1-4H^2(x^2+y^2)}}-2y\tau,\\ \tilde{b}(x,y)=2Hy\sqrt{\frac{1+4\tau^2(x^2+y^2)}{1-4H^2(x^2+y^2)}}+2x\tau.
		\end{align*}
		Thus, defining \[h(\rho)=(\tilde{a}^2+\tilde{b}^2)(\rho)=\frac{4(H^2+\tau^2)\tanh^2(\tfrac{\rho}{2})}{1-4H^2\tanh^2(\tfrac{\rho}{2})}\]
		and $L(r)=\int_{\Lambda(r)}\tfrac{ds}{\sqrt{1+h(s)}}=\tfrac{2\Length(\Lambda(r))}{1+h(\rho)}$, we have $g(\rho)=\int\frac{\sqrt{1+h(\rho)}}{2\Length(\Lambda(r))}d\rho$.
		
		Denoting by $\Omega\subset\mathbb{H}^2$ the domain bounded by $\pi(\Sigma_H\cap P_H)$, our result shows that, if $\Length(\Lambda(\rho))$ is uniformly bounded,
		\begin{figure}[h!]
			\centering
			\includegraphics[width=0.34\linewidth]{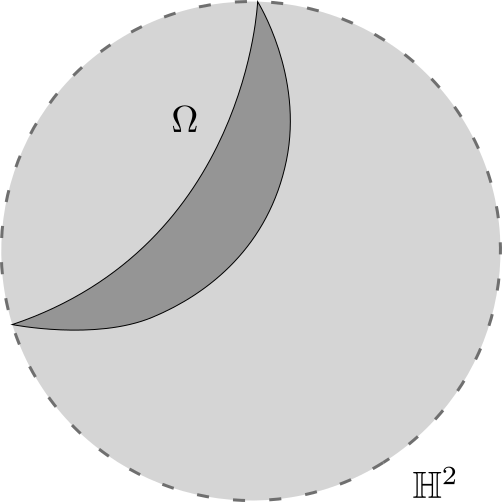}		
			\caption{Domain in $\mathbb{H}^2$ such that $\Lambda(\rho)$ is uniformly bounded.}
		\end{figure}
		then $\Sigma_H-P_H$ grows as
		\[g(r)\simeq\begin{cases}\begin{array}{ll}
				\left(\tfrac{1}{2}+\sqrt\frac{H^2+\tau^2}{1-4H^2}\right)r+ o(r)&\textrm{for }0\leq H<\tfrac{1}{2};\\ &\\
				\frac{\sqrt{1+4\tau^2}}{2}e^{\tfrac{r}{2}} +o\left(e^{\tfrac{r}{2}}\right)&\textrm{for }H=\tfrac{1}{2}.
		\end{array}\end{cases}\]
		If $\Omega\subset\mathbb{H}^2$ is an exterior domain, then $\Lambda(r)\simeq 2\pi \sinh(r)$. Hence, 
		\[g'(r)=\tfrac{1}{L(r)}\simeq \begin{cases}
			\begin{array}{ll}
				\sqrt{\frac{1-H^2-\tau^2}{4\pi^2(1-4H^2)}}e^{-r} + o\left(e^{-r}\right)&\textrm{if } H,\tau\neq0,\\
				\sqrt{\frac{1+4\tau^2}{2\pi}}e^{-\tfrac{r}{2}}+o\left(e^{-\tfrac{r}{2}}\right)&\textrm{if } H=1/2.
			\end{array}
		\end{cases}\]
		that is, $\underset{r\to\infty}{\lim}g(r)$ converges for any $0\leq H\leq 1/2$ an for any $\tau\in\R$.
		The existence of bounded graphs over exterior domains, characterized by zero boundary values and a constant mean curvature $H$ with respect to a rotational zero section of constant mean curvature $H$, has been established in various cases. Citti and Senni \cite{CiSe12} demonstrated this existence for $\tau=0$ and $H\in(0,1/2)$. Peñafiel \cite{Pe12}, focusing on surfaces invariant by rotation, proved the result for $H<1/2$ and any choice of $\tau$. In the work of Elbert, Nelli and Sa Earp \cite{ElNeSaE12}, the case of $H=1/2$ and $\tau=0$ was proven. However, the existence of a similar result for $H=1/2$ and $\tau\neq 0$ remains unknown.
		
	\end{example} 
	In all the examples we have seen, we were always able to find two different solutions whenever the growth rate function was not divergent. So it seems natural to ask the following question:
	\begin{itemize}
		\item Assume that $\Omega\subset M$ is an unbounded domain such that for any choice of $p\in M,$ with $\Omega\cap \mathrm{Cut}(p)=\emptyset$, the function $g(r)=\int_{r_0}^r\tfrac{ds}{L(s)}$ does not diverge to $+\infty$. Does a positive and bounded solution to the following Dirichlet problem 
		\[\begin{cases}
			\begin{array}{ll}
				H(u)=H(0)&\textrm{in }\Omega,\\
				u=0&\textrm{on }\partial\Omega
			\end{array}
		\end{cases}\]
		exist?
	\end{itemize}
	
	\section{A uniqueness results in a strip of the Heisenberg group}\label{Sec:UniquenessNil}
	The 3-dimensional Heisenberg group is a particular case of Killing Submersion where the base $M$ is $\R^2$ endowed with the Euclidean metric, $\mu\equiv 1$ and $\tau$ is constant. The classical model that describes $\Nil(\tau)$ is given by $\R^3$ endowed with the metric $ds^2=dx^2+dy^2+\left[\tau(y dx-x dy)+dz\right]^2$. In this model the Riemannian submersion reads as $\pi(x,y,z)=(x,y)$ and the Killing vector field is $\xi=\partial_z$. 
	
	In the Heisenberg space, Cartier constructed non-zero graphs over a wedge $\Omega\subset\R^2$ (with the vertex at the origin and any angle less the $\pi$) with zero values on $\partial \Omega$. This shows that the solution to the Dirichlet problem in $\Omega$ is not unique \cite[Corollary 3.8]{Car16}. We will prove a uniqueness result for minimal graphs with bounded boundary values over domains contained in a strip. The analogous problem was studied in $\R^3$ by Collin and Krust \cite[Theorem 1]{CoKu91} and by Elbert and Rosenberg in the product space $M\times\R,$  \cite[Theorem 1.1]{ElRo08}.
	
	The ambient isometries of this model are generated by the following maps (see \cite{FiMePe99} for more details):
	\begin{equation*}\label{Eq:IsometriesNil}
		\begin{array}{ll}
			& \varphi_1^c(x,y,z)=(x+c,y,z+c\tau y ),\\
			& \varphi_2^c(x,y,z)=(x,y+c,z-c\tau x ),\\
			& \varphi_3^c(x,y,z)=(x,y,z+c ),\\
			& \varphi_4^\theta(x,y,z)=(x\cos\theta-y\sin\theta,x\sin\theta +y\cos\theta,z ),\\
			& \varphi_5(x,y,z)=(x,-y,-z ).\\
		\end{array}
	\end{equation*}
	
	For convenience, we will introduce the following notation. Let $\Omega\subseteq\R^2$ be an open subset and let $S\subset\Nil$ be the graph of a function $u^S\in\mathcal{C}^2(U)$ where $U\subset\R^2$ is an open subset containing $\overline{\Omega}.$ We call $P(S,\Omega)$ the following Dirichlet problem:
	\begin{equation*}
		P(S,\Omega):\begin{cases}
			\begin{array}{lc}
				H_u=0&\textrm{in }\Omega,\\
				u=u^{S}&\textrm{on }\partial\Omega.
			\end{array}
		\end{cases}
	\end{equation*}
	
	Let $\Omega$ be a domain contained in a strip of $\R^2.$ Without loss of generality, applying a rotation $\varphi_4^\theta$, we can assume $\Omega\subseteq\Omega_a^b=\left\{(x,y)\in\R^2\mid a< y< b\right\}$ for some $a,b\in\R$ such that $a<b$.
	Let $I\subset\Nil$ be the entire minimal graph invariant by $\varphi_1$ given by $u^I(x,y)=\tau x y.$
	\begin{lemma}\label{lemma:TraslInv}
		The only solution to $P(I,\Omega)$ is $u^I_{\mid\Omega}$.
	\end{lemma}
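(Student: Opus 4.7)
The plan is to argue by contradiction, combining the Collin--Krust estimate of Theorem~\ref{thm:Collin-Krust-general-tau} with an a priori uniform height bound on $u-u^I$ produced by a bounded Scherk-type barrier over the strip.

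Suppose $u\in C^\infty(\Omega)$ solves $P(I,\Omega)$ with $u\not\equiv u^I_{\mid\Omega}$. The ambient isometry $\varphi_5(x,y,z)=(x,-y,-z)$ preserves the graph $I$ (since $\tau\,x\,(-y)=-\tau xy$) and sends strip-contained domains to strip-contained ones, so after possibly applying $\varphi_5$ I may assume that a connected component $\Omega^+$ of $\{u>u^I\}$ is nonempty, with $u>u^I$ in its interior and $u=u^I$ on $\partial\Omega^+\subseteq\partial\Omega\cup\{u=u^I\}$. I then pass to the local model of $\Nil$ in which the initial section is $F_0=I$: in this model $w:=u-u^I$ describes the same minimal Killing graph, with $w>0$ in $\Omega^+$, $w=0$ on $\partial\Omega^+$, and $H(w)=0$, matching the mean curvature of the (now minimal) zero section. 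By the transformation rules recalled at the end of Section~\ref{Collin-Krust-results}, the new coordinate functions $a,b$ satisfy $\sqrt{1+a^2+b^2}=W_{u^I}=\sqrt{1+4\tau^2 y^2}$, which is uniformly bounded on the strip $\Omega_a^b$.

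Applying Theorem~\ref{thm:Collin-Krust-general-tau} to $w$ on $\Omega^+$ with any $p\in\R^2$ (its cut locus is empty in the Euclidean plane), the expansion-rate function $L(r)=\int_{\Lambda^+(r)}\frac{2}{\sqrt{1+4\tau^2 y^2}}$ is uniformly bounded in $r$: the arcs $\Lambda^+(r)\subseteq\partial B_p(r)\cap\Omega_a^b$ have length bounded by approximately $2(b-a)$, and the integrand is bounded between two positive constants. Therefore the growth-rate function $g(r)=\int_{r_0}^r \frac{ds}{L(s)}$ diverges linearly, and the theorem yields $M(r):=\sup_{\Lambda^+(r)}w\ge c\,r$ for large $r$ with some $c>0$.

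To reach a contradiction it suffices to establish a uniform upper bound $w\le C$ on $\Omega^+$. Following the strategy used in \cite{CoKu91} and \cite{ElRo08}, the plan is to construct a bounded minimal graph $V$ over a slightly enlarged strip $\Omega_{a-\epsilon}^{b+\epsilon}\supseteq\overline{\Omega^+}$ by taking the Compactness Theorem~\ref{thm:Compactness} limit of the Jenkins--Serrin solutions furnished by Theorem~\ref{thm:JS} on the rectangular $\mu$-polygons $(-N,N)\times(a-\epsilon,b+\epsilon)$ with boundary values $+\infty$ on the two vertical sides and $0$ on the two horizontal sides; the maximum principle (Lemma~\ref{MaxPrin}) applied on the exhaustion $\Omega^+\cap B_p(N)$ would then give $w\le V\le C$ on $\Omega^+$, contradicting $M(r)\ge c\,r$. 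The principal obstacle is precisely the construction and the uniform height control of this Scherk-type barrier in $\Nil$: one has to verify the inscribed-polygon inequalities of Theorem~\ref{thm:JS} for slab rectangles and extract uniform bounds as $N\to\infty$, which is more delicate than the analogous $\mathbb{R}^3$ or $M\times\R$ arguments because of the non-trivial Heisenberg bundle curvature.
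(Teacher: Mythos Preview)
Your overall strategy---Collin--Krust growth versus a barrier-induced uniform bound---is exactly the paper's, but you have missed the one idea that makes the barrier step work, and the gap you flag at the end is real.

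The paper does \emph{not} take a limit of Jenkins--Serrin solutions on expanding rectangles. Instead it fixes a \emph{single} rectangle $R=[0,c]\times[a,b]$ with $c>b-a$, solves the Jenkins--Serrin problem there to get $\omega^\pm$ with values $u^I$ on the horizontal sides and $\pm\infty$ on the vertical sides, and then observes that the horizontal translation $\varphi_1^t$ is an isometry of $\Nil$ that fixes the surface $I$ setwise (since $u^I(x+t,y)=\tau(x+t)y=u^I(x,y)+\tau t y$ is exactly how $\varphi_1^t$ acts on heights). Hence $\varphi_1^t(\Sigma^+)$ lies above $I$ for every $t$, and sliding the single barrier along the strip gives the uniform bound $|u-u^I|<M$ at once. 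No limit, no convergence estimate.

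By contrast, your proposed limit $V=\lim_N\omega_N^+$ of solutions on $(-N,N)\times(a-\epsilon,b+\epsilon)$ with zero horizontal data is precisely the object whose boundedness is the content of Theorem~\ref{them:uniquenessNil}, and the paper proves \emph{that} theorem by invoking Lemma~\ref{lemma:TraslInv}. So your route is circular: the ``uniform height control as $N\to\infty$'' that you identify as the obstacle is in fact equivalent to what you are trying to prove. The translation invariance of $I$ under $\varphi_1$ is the missing ingredient that breaks the circle.
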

	\begin{proof}	
		If $\Omega$ is relatively compact, Lemma \ref{MaxPrin} implies the result, so without loss of generality we can assume $\Omega$ to be unbounded.
		Let $c\in\R$ be such that $c>b-a$. Hence, Theorem \ref{thm:JS} implies that in the rectangle $R$ of vertices $A=(0,a)$, $B=(c,a)$, $C=(c,b)$ and $ D=(0,b)$ there exists a unique minimal surface $\Sigma^\pm$, graph of the function $\omega^\pm$, intersecting the surface $I$ above the sides $\overline{AB}$ and $\overline{CD}$ and diverging to $\pm\infty$ over $\overline{BC}$ and $\overline{DA}$.
		If $u$ is any solution of $P(I,\Omega_a^b),$ it follows that $\omega^-\leq u\leq\omega^+$ in $\partial(\Omega_a^b\cap R)$, then the Maximum Principle implies that $\omega^-\leq u\leq\omega^+$ in $\Omega_a^b\cap R$ (see Figure \ref{UpBar}).   	
		\begin{figure}[h!]
			\includegraphics[width=0.34\linewidth]{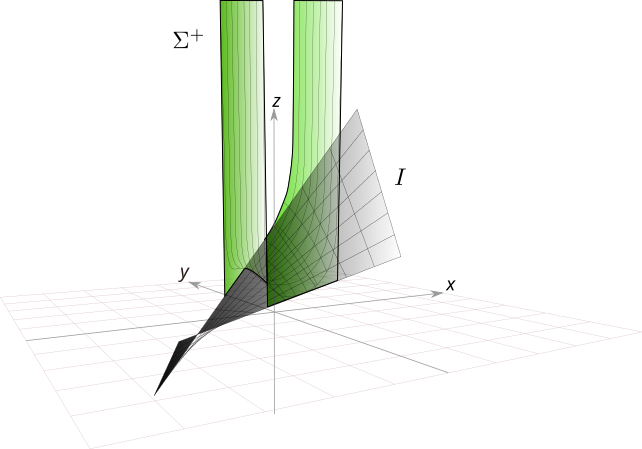}\qquad
			\includegraphics[width=0.34\linewidth]{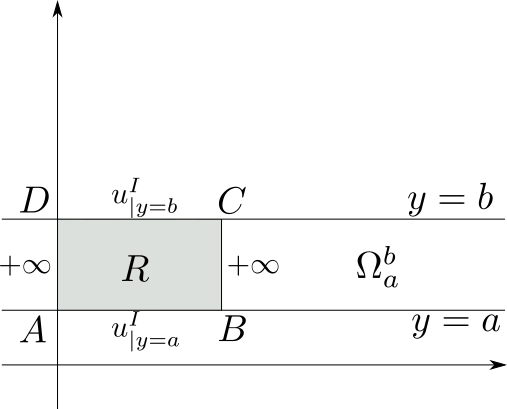}
			\caption{The upper barrier $\Sigma^+$.}
			\label{UpBar}
		\end{figure}
		Since $\varphi_1^t(I)=I$ and $\varphi_1^t$ is an isometry for all $t\in\R,$ $\varphi_1^t(\Sigma^+)$ (resp. $\varphi_1^t(\Sigma^-)$) is above (resp. below) $I$ for all $t.$
		It follows that there exists a positive constant $M$ such that any solution $\tilde{u}$ of $P(I,\Omega)$ satisfies $|\tilde{u}(p)-u^{I}(p)|< M$ for all $p\in \Omega.$ However, Theorem \ref{thm:Collin-Krust-general} implies that $| \tilde{u}-u^{I}|$ is not bounded, so we are done. 
	\end{proof}
	
	\begin{remark}
		Notice that, if $\Omega$ is a convex domain contained in a strip of $\R^2$, $f$ is a piecewise continuous function over $\partial\Omega,$ and there exists a positive constant $C$ such that $|u^{I}-f|<C$, then \cite[Theorem 4.3]{NeSaETo17} and Lemma \ref{lemma:TraslInv} imply that there exists a unique minimal graph $u$ over $\Omega$ with $u_{\mid \partial\Omega}=f$ and $|u-u^{I}|<C$.
	\end{remark}
	Using Lemma \ref{lemma:TraslInv}, it is easy to prove the following theorem, giving a positive answer to \cite[Question (a), p.17]{NeSaETo17}.
	\begin{theorem}\label{them:uniquenessNil}
		The only minimal graph in $\Nil$ over a strip of $\R^2$ with zero values on the boundary of the strip is the trivial one.
	\end{theorem}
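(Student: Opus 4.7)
The plan is to proceed by contradiction, using Theorem \ref{thm:Collin-Krust-general} together with the maximum principle Lemma \ref{MaxPrin} applied to the pair $(u,0)$, where $0$ is the trivial minimal graph. Let $u$ be a minimal graph over $\Omega_a^b$ with $u|_{\partial\Omega_a^b}=0$; I want to conclude that $u\equiv0$.

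Granting for the moment that $\|u\|_\infty<+\infty$, the argument runs as follows. If $u\not\equiv0$, the strong maximum principle for the minimal surface equation produces a connected component $\Omega^+\subseteq\Omega_a^b$ of $\{u>0\}$ on which $u>0$ in the interior and $u=0$ on $\partial\Omega^+$. If $\Omega^+$ is relatively compact in $\Omega_a^b$, Lemma \ref{MaxPrin} applied to the pair $(u,0)$ immediately forces $u\leq0$ on $\Omega^+$, contradicting $u>0$. Hence $\Omega^+$ is unbounded; since $\mu\equiv1$ in $\Nil$ and $\Omega^+\subseteq\Omega_a^b$ lies in a strip of finite width $b-a$, the length of $\Lambda(r)=\partial B_p(r)\cap\Omega^+$ stays uniformly bounded as $r\to\infty$, so $L(r)=\int_{\Lambda(r)}\mu^2$ is bounded and the growth function $g(r)$ diverges at least linearly. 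Theorem \ref{thm:Collin-Krust-general} then yields $\liminf_{r\to\infty}M(r)/g(r)>0$; but $M(r)\leq\|u\|_\infty<+\infty$ gives $M(r)/g(r)\to0$, a contradiction. Running the same argument on components of $\{u<0\}$ rules out negative values, so $u\equiv0$.

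The remaining task, which I expect to be the heart of the proof, is establishing that $u$ is uniformly bounded on $\Omega_a^b$. My plan is to construct Jenkins--Serrin barriers $\omega^\pm_c$ over the rectangles $R_c=[-c,c]\times[a,b]$ for every $c>b-a$, with $\omega^\pm_c=0$ on the two horizontal sides and $\omega^\pm_c=\pm\infty$ on the vertical sides; the admissibility condition $2\alpha(\mathcal P)<\gamma(\mathcal P)$ of Theorem \ref{thm:JS} is automatic once $c>b-a$. Lemma \ref{MaxPrin} then gives $\omega^-_c\leq u\leq\omega^+_c$ on $R_c$, and the sequence $\omega^+_c\geq 0$ is monotone decreasing in $c$; by Theorem \ref{thm:Compactness} it converges in $\mathcal C^k$ on compact subsets of $\Omega_a^b$ to a nonnegative minimal graph $\omega^+_\infty$ vanishing on $\partial\Omega_a^b$ and bounding $u$ from above (symmetrically for $\omega^-_\infty$).

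The main obstacle is proving that $\omega^+_\infty$ is uniformly bounded on the whole strip. In the proof of Lemma \ref{lemma:TraslInv} the uniform bound was obtained by translating the JS barriers via the isometry $\varphi_1^t$, which preserves $I$; but here $\varphi_1^t$ does \emph{not} preserve zero boundary values, since it sends the zero section to the minimal graph $V_t(x,y)=t\tau y$, so a direct translation argument is unavailable. To circumvent this, I would compare the barrier $\omega^+_c$ on $R_c$ with translates of the JS barriers from Lemma \ref{lemma:TraslInv} associated with the $\varphi_1$-invariant reference $I$, and transfer the uniform bound $|\tilde u-u^I|\leq M$ already obtained there to the present setting with zero boundary values, thereby closing the argument.
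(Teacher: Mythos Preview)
Your overall architecture matches the paper's: build Jenkins--Serrin barriers $\omega_c^\pm$ on rectangles $R_c$, pass to the monotone limits $\omega^\pm_\infty$ trapping $u$, and then argue that $\omega^\pm_\infty\equiv 0$. You also correctly isolate the crux as controlling $\omega^+_\infty$, and you see that the isometry $\varphi_1^t$ does not preserve the zero section, so the translation trick from Lemma~\ref{lemma:TraslInv} cannot be applied verbatim.

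The gap is in your proposed closing move. You plan to ``transfer the uniform bound $|\tilde u-u^I|\leq M$'' from Lemma~\ref{lemma:TraslInv} to obtain a uniform bound on $\omega_\infty^+$. This cannot work as stated: the reference surface $u^I(x,y)=\tau xy$ is itself unbounded on the strip $\Omega_a^b$, so an inequality of the form $|\omega_\infty^+-u^I|\leq M$ gives no uniform control on $\omega_\infty^+$. More structurally, your proof only invokes Theorem~\ref{thm:Collin-Krust-general}, which in a strip with $\mu\equiv 1$ yields $g(r)\sim r$, i.e.\ merely \emph{linear} growth for a nontrivial $\omega_\infty^+$. Since every translate of $u^I$ also has linear growth, a linear lower bound never forces $\omega_\infty^+$ to cross the comparison surface, and the set where Lemma~\ref{lemma:TraslInv} could be invoked may be empty. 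So the argument does not close.

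The paper resolves this with one additional idea you are missing: it applies the sharper estimate of Theorem~\ref{thm:Collin-Krust-general-tau} (with the zero section as reference), where in $\Nil$ one has $a^2+b^2=\tau^2(x^2+y^2)$ and hence $L(r)\asymp r^{-1}$ on a strip, giving $g(r)\asymp r^2$. Thus a nonzero $\omega^+_\infty$ must grow at least \emph{quadratically}. One then compares with the \emph{linearly} growing minimal graph $u_a(x,y)=M+\tau x(y-a)$ (a vertical/horizontal translate of $u^I$), chosen so that $u_a\geq \omega^+_\infty$ on $\partial(\Omega_a^b\cap\{x\geq 0\})$. The growth discrepancy forces the region $\{\omega^+_\infty>u_a\}$ to be nonempty, and on its boundary $\omega^+_\infty=u_a$; now Lemma~\ref{lemma:TraslInv} (applied to the translate $S_a$ of $I$) gives $\omega^+_\infty\equiv u_a$ there, a contradiction. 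This yields $\omega^\pm_\infty\equiv 0$ directly, which in particular makes your Step~1 superfluous: once $\omega^-_\infty\leq u\leq\omega^+_\infty$ and both vanish, $u\equiv 0$ follows without a separate appeal to Collin--Krust on $u$.
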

	\begin{proof}
		After applying a rotation $\varphi_4^{\theta}$ for some $\theta$, we consider the strip $\Omega_a^b$ parallel to the $x$-axis described above. For each $n\in\N$, denote by $R_n$ the rectangle of vertices  $A_n=(-n,a)$, $B_n=(n,a)$, $C_n=(n,b)$ and $ D_n=(-n,b)$. Let $n_0$ be the integer part of $\frac{b-a}{2}+1$, then \cite[Theorem 6.4]{DMN} guarantees that for any $n>n_0$ there exists a unique solution $\omega_n^{\pm}$ to the Jenkins--Serrin problem in $R_n$ that is zero on the sides parallel to the $x$-axis and diverges to $\pm\infty$ on the sides parallel to the $y$-axis.
		It is clear that, if $u$ is a minimal solution in $\Omega_a^b$ with zero boundary values, then  $\omega_n^+>u>\omega_n^-$ for $n\in\N.$
		Furthermore, for any $n>n_0$, the Maximum Principle implies that $\omega_{n-1}^+>\omega_{n}^+>0$ (resp. $\omega_{n-1}^-<\omega_{n}^-<0$) in $R_{n-1}$. Thus, the Compactness Theorem implies that the limit $\omega^\pm=\underset{n\to\infty}{\lim}\omega_n^\pm$ exists and we call $\Sigma^\pm$ their graphs.
		
		\begin{figure}[h!]
			\centering
			\includegraphics[width=0.7\linewidth]{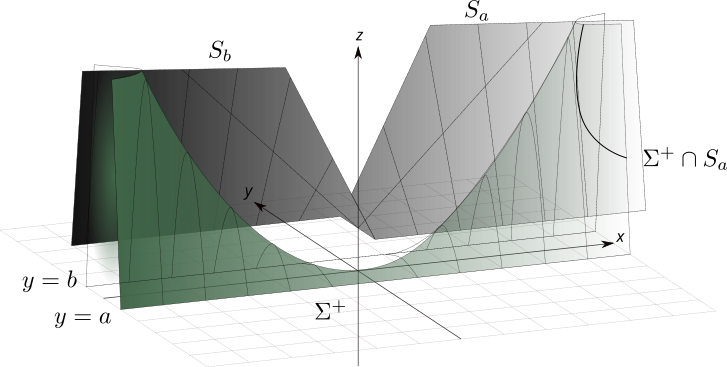}
			\caption{A contradiction on the growth of $\omega^+$.}
		\end{figure}
		
		If $\omega^+\equiv0\equiv \omega^-$, then we are done. So, suppose for instance that $\omega^+\neq0$ in $\Omega_a^b$ (the same argument can be applied with a slight modification to $\omega^-$). Theorem \ref{thm:Collin-Krust-general-tau} implies that $\omega^+$ has at least a quadratic height growth.
		We first study the asymptotic behaviour of $\omega^+$ in $\Omega_a^b\cap\left\{x\geq0\right\}$.
		Let $$M=\sup_{y\in(a,b)}\omega_{n_0}^+(0,y)$$ and, for any $c\in \R$, denote by $S=\varphi_2^{c/2}(I)$ (that is, the graph of the function $u(x,y)=\tau x(y-c)$), and $S_c=\varphi_3^M(S)$ 
		(that is, the graph of $u_c(x,y)=M+\tau x(y-c)$). Thus, by construction, $u_a\geq \omega^+$ in $\partial(\Omega_a^b\cap\left\{x\geq0\right\})$.
		Since $u_a$ has a linear height growth, it follows that $$\Omega=\left\{(x,y)\in\Omega_a^b\mid x\geq 0, u_a(x,y)<\omega^+(x,y)\right\}\neq\emptyset.$$ Hence, $\omega^+_{\mid\Omega}\neq u_a$ is a solution of $P(S_a,\Omega)$ in contradiction with Lemma \ref{lemma:TraslInv}.	
		To study the behaviour of $\omega^+$  in $\Omega_a^b\cap\left\{x\geq0\right\}$, we apply the same argument by replacing $S_a$ with $S_b$.
	\end{proof}
	
	Once we have proved that the only minimal solution with constant boundary values on a strip is constant, we can give a positive answer to \cite[Question (b), p.17]{NeSaETo17}. 
	\begin{corollary}\label{col:uniquenessNil}
		Let $\Omega$ be a domain contained in a strip of $\R^2$, such that $\partial\Omega$ has non-negative curvature with respect to $\Omega$, and $\phi\colon\partial\Omega\to\R$ be a piecewise $\mathcal{C}^2$-function and suppose that there exist $m,M\in\R$ such that $m<\phi<M$. Hence, the solution to the problem 
		\begin{equation*}
			\begin{cases}
				\begin{array}{lc}
					H_u=0&\textrm{in }\Omega,\\
					u=\phi&\textrm{on }\partial\Omega
				\end{array}
			\end{cases}
		\end{equation*} 
		is unique and satisfies $m<u<M$.
	\end{corollary}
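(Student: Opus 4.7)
The plan is to (i) prove that every solution $u$ of the Dirichlet problem automatically satisfies $m<u<M$, by trapping $u$ between Jenkins--Serrin barriers and invoking the strip uniqueness of Theorem~\ref{them:uniquenessNil}; (ii) invoke Theorem~\ref{thm:existence-unbDom} together with Remark~\ref{rem:ends} for the existence of a solution; and (iii) derive uniqueness from the Collin--Krust estimate of Theorem~\ref{thm:Collin-Krust-general}, once all solutions are known to be bounded.

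For step (i), after a rotation $\varphi_4^\theta$ I assume $\Omega\subset\Omega_a^b=\{a<y<b\}$. For each integer $n>(b-a)/2$, Theorem~\ref{thm:JS} produces a minimal graph $\omega_n$ on $R_n=[-n,n]\times[a,b]$ with boundary values $M$ on the horizontal sides and $+\infty$ on the vertical sides; comparison with the constant $M$ via Lemma~\ref{MaxPrin} gives $\omega_n\geq M$ in $R_n$. On $\partial\Omega\cap R_n$ one has $u=\phi<M\leq\omega_n$, while on the vertical sides the barrier diverges; after approximating the divergent data by constants $N\to+\infty$ and applying Lemma~\ref{MaxPrin}, we obtain $u\leq\omega_n$ on $\Omega\cap R_n$. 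A further application of Lemma~\ref{MaxPrin} shows that $\omega_n$ is monotone decreasing in $n$ and bounded below by $M$, so Theorem~\ref{thm:Compactness} yields convergence to a minimal graph $\omega_\infty$ on $\Omega_a^b$ with $\omega_\infty=M$ on the two boundary lines. Then $\omega_\infty-M$ is a minimal graph on the strip with zero boundary values, which Theorem~\ref{them:uniquenessNil} forces to vanish identically; hence $u\leq M$ on $\Omega$. A symmetric construction with value $m$ on the horizontal sides and $-\infty$ on the vertical ones gives $u\geq m$, and the strong maximum principle upgrades both inequalities to strict ones.

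For step (ii), $\partial\Omega$ has non-negative curvature and each end of $\Omega$ lies in the strip, which is an admissible $\mu$-strip since Euclidean segments between diverging sequences on the two sides do not converge. Theorem~\ref{thm:existence-unbDom} as relaxed in Remark~\ref{rem:ends} therefore provides a solution $\tilde u$, which satisfies $m<\tilde u<M$ by step (i). For step (iii), let $u$ be any other solution. By step (i) both $u$ and $\tilde u$ are bounded in $(m,M)$. Set $\Omega^+=\{u>\tilde u\}$; if nonempty, Lemma~\ref{MaxPrin} rules out relatively compact components, so $\Omega^+$ contains an unbounded component $\Omega'\subset\Omega_a^b$. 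In $\R^2$ the cut locus of any point is empty, and $\Length(\Lambda(r))\to 2(b-a)$ as $r\to\infty$, so $L(r)=\int_{\Lambda(r)}\mu^2$ stays bounded and $g(r)=\int_{r_0}^r ds/L(s)$ diverges linearly. Theorem~\ref{thm:Collin-Krust-general} then forces $\sup_{\Lambda(r)}(u-\tilde u)\to\infty$, contradicting $u-\tilde u\leq M-m$. Symmetrically $\{u<\tilde u\}=\emptyset$, hence $u=\tilde u$.

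The main technical subtlety is the comparison of $u$ with $\omega_n$ despite the divergent boundary data on the vertical sides: one replaces $+\infty$ with constants $N$ tending to infinity, applies Lemma~\ref{MaxPrin} to the resulting bounded problem, and passes to the limit using monotonicity together with Theorem~\ref{thm:Compactness}. The remainder of the argument is a direct assembly of the tools developed in the preceding sections.
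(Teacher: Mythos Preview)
Your argument is correct and is precisely the natural fleshing-out of what the paper leaves implicit: the corollary is stated without proof as a direct consequence of Theorem~\ref{them:uniquenessNil}, and your three steps (bound any solution between $m$ and $M$ via the Jenkins--Serrin barriers of Theorem~\ref{them:uniquenessNil} shifted by $M$ and $m$; obtain existence from Theorem~\ref{thm:existence-unbDom}; deduce uniqueness from Theorem~\ref{thm:Collin-Krust-general} once all solutions are known to be bounded) are exactly the intended ones. The only cosmetic shortcut you could take is to observe that the barriers $\omega_n^\pm$ built in the proof of Theorem~\ref{them:uniquenessNil} already converge to $0$, so their vertical translates by $M$ (resp.\ $m$) converge to $M$ (resp.\ $m$) without needing to re-invoke the statement of that theorem; but your version, which appeals to the statement rather than the proof, is equally valid.
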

	
		Since to prove the uniqueness we focus separately on each end of the domain, using Theorem \ref{thm:existence-unbDom} and Remark \ref{rem:ends}, we can generalize the previous result as follows:
		
		\begin{corollary}
			Let $\Omega$ be a domain contained in the union of a countable numbers of strips of $\R^2$, such that $\partial\Omega$ has non-negative geodesic curvature with respect to $\Omega$, and $\phi\colon\partial\Omega\to\R$ be a piecewise $\mathcal{C}^2$-function and suppose that there exist $m,M\in\R$ such that $m<\phi<M$. Hence, the solution to the problem 
			\begin{equation*}
				\begin{cases}
					\begin{array}{lc}
						H_u=0&\textrm{in }\Omega,\\
						u=\phi&\textrm{on }\partial\Omega
					\end{array}
				\end{cases}
			\end{equation*} 
			is unique and satisfies $m<u<M$.
		\end{corollary}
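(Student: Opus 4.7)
The plan is to combine the existence half of Theorem~\ref{thm:existence-unbDom}, in the relaxed form of Remark~\ref{rem:ends}, with an end-by-end application of the single-strip uniqueness technique used in Theorem~\ref{them:uniquenessNil} and Corollary~\ref{col:uniquenessNil}. In $\Nil$ we have $\mu\equiv 1$, so every Euclidean strip is a $\mu$-strip; moreover it is admissible, because two sequences of points diverging along the two boundary lines of the strip are joined by Euclidean segments that themselves escape to infinity and therefore cannot subconverge.

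For existence, since $\partial\Omega$ has non-negative geodesic curvature, $\Omega$ has countably many ends, and each end is contained in an admissible $\mu$-strip, the hypotheses in Remark~\ref{rem:ends} are satisfied and produce a minimal Killing graph $u$ over $\Omega$ with boundary values $\phi$. For the bounds I would observe that the flat section $F_0(x,y)=(x,y,0)$ is minimal: a direct computation from~\eqref{eq:MC} with $\lambda=\mu=1$, $a=-\tau y$ and $b=\tau x$ yields $H_0\equiv 0$. Since $\partial_z$ is a Killing vector field, every constant $c\in\R$ then defines a minimal Killing graph, so comparing $u$ with $u\equiv m$ and $u\equiv M$ on an exhaustion of $\Omega$ by relatively compact subdomains via Lemma~\ref{MaxPrin} gives $m<u<M$.

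For uniqueness I would argue by contradiction, one end at a time, as suggested in the statement. Suppose $u_1,u_2$ are two bounded solutions with $u_1\not\equiv u_2$, and pick a connected component $D$ of $\{u_1>u_2\}$; then $u_1=u_2$ on $\partial D$, both functions lie in $(m,M)$, and $D$ inherits a countable-ends structure, each end contained in one of the original strips. Fix one such end $E\subseteq D$ which, after a suitable isometry $\varphi_4^\theta$, is contained in $\{a<y<b\}$. Choose a basepoint $p\in\R^2\setminus\overline{E}$; since $\mathrm{Cut}(p)=\emptyset$ in Euclidean $\R^2$, Theorem~\ref{thm:Collin-Krust-general} applies to $u_1,u_2$ on $E$ and yields $\liminf_{r\to\infty}M(r)/g(r)>0$. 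But the slice $\Lambda(r)\cap E$ lies in a strip of width $b-a$, so $L(r)=\int_{\Lambda(r)\cap E}\mu^2\le b-a$, whence $g(r)\ge (r-r_0)/(b-a)\to\infty$. This forces $M(r)\to\infty$, contradicting $|u_1-u_2|<M-m$, so $u_1=u_2$.

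The main obstacle is verifying that the Collin--Krust machinery really applies on each end of $D$: one must check that $D$ has the appropriate end structure, that a basepoint can be chosen so that the geodesic spheres of $\R^2$ sweep each end transversally, and that the bound $L(r)\le b-a$ persists after passing to the connected component. In the Euclidean plane all three points are mild, and once they are settled the argument is essentially the end-by-end version of the proof of Corollary~\ref{col:uniquenessNil}.
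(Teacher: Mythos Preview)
Your overall strategy matches the paper's, but there is a genuine gap in the bound step. The argument ``comparing $u$ with $u\equiv m$ and $u\equiv M$ on an exhaustion of $\Omega$ by relatively compact subdomains via Lemma~\ref{MaxPrin}'' does not establish $m<u<M$ for an \emph{arbitrary} solution: on the part of $\partial\Omega_n$ lying in the interior of $\Omega$ you have no a priori control over $u$, so the hypotheses of Lemma~\ref{MaxPrin} are not met. At best this yields $m<u<M$ for the particular solution produced by the construction in Theorem~\ref{thm:existence-unbDom} (this is precisely the content of the Remark following that theorem), and then your Collin--Krust step only rules out a second \emph{bounded} solution; a possibly unbounded solution is not excluded, so the uniqueness claim of the Corollary is not proved.

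What is missing is exactly the mechanism of Theorem~\ref{them:uniquenessNil}, which the paper invokes through Corollary~\ref{col:uniquenessNil} and which you mention in your plan but never actually deploy. On each end, if $D=\{u>M\}\neq\emptyset$, compare $u$ on $D\cap R_n$ with the Jenkins--Serrin solution $\omega_n^+$ on the rectangle $R_n$ in the ambient strip having value $M$ on the horizontal sides and $+\infty$ on the vertical ones: since $\partial(D\cap R_n)$ lies either on $\partial D$ (where $u=M\le\omega_n^+$) or on the vertical sides (where $\omega_n^+=+\infty$), the Maximum Principle gives $u\le\omega_n^+$, hence $u\le\omega^+=\lim_n\omega_n^+$; but $\omega^+$ is a minimal graph on the full strip with constant boundary value $M$, so $\omega^+\equiv M$ by Theorem~\ref{them:uniquenessNil}, contradicting $u>M$ on $D$. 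Once $m<u<M$ holds for every solution, your Collin--Krust argument goes through---though it is cleaner to apply Theorem~\ref{thm:Collin-Krust-general} on the whole component $D=\{u_1>u_2\}$ rather than on a single end $E$, since $u_1\neq u_2$ on the compact cut $\partial E\setminus\partial D$ and the hypothesis $u=v$ on $\partial\Omega$ of that theorem fails there.
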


		It is possible to study the problem of uniqueness of minimal graphs in a strip of $\PSL$, viewed as a Killing Submersion over $\mathbb{H}^2$ with $\mu\equiv 1 $ and $\tau$ constant. This space, also denoted by $\E(-1,\tau)$, can be described as $\left\{\mathbb{D}\times\R,ds^2\right\}$, with $\mathbb{D}=\left\{(x,y)\in\R^2\mid \lambda(x,y)>0\right\}$ and $ds^2=\lambda^2(dx^2+dy^2) +\left[2\tau\lambda(ydx-xdy)+dz\right]^2$, where $\lambda=\frac{2}{1-(x^2+y^2)}$. In this context  $\varphi_1$ should be defined as the lifting of the hyperbolic translation, so it makes sense to consider a strip invariant under an hyperbolic translation, that is, a non-compact domain bounded by two complete curves equidistant from a fixed geodesic. The minimal graphs invariant by $\varphi_1$ in $\PSL$ have bounded height (see for example \cite{Cas} or \cite{Pe12}), so the arguments used for $\Nil$ can be easily adapted to this case, and the theorem will read as follows.
	\begin{theorem}
		Let $\Omega$ be a domain contained in the union of a countable number of strips of $\H^2$, such that each strip is invariant under an hyperbolic translation, $\partial\Omega$ has non-negative geodesic curvature with respect to $\Omega$, and $\phi\colon\partial\Omega\to\R$ be a piecewise $\mathcal{C}^2$-function and suppose that there exist $m,M\in\R$ such that $m<\phi<M$. Hence, the solution to the problem 
		\begin{equation*}
			\begin{cases}
				\begin{array}{lc}
					H_u=0&\textrm{in }\Omega,\\
					u=\phi&\textrm{on }\partial\Omega
				\end{array}
			\end{cases}
		\end{equation*} 
		is unique and satisfies $m<u<M$.
	\end{theorem}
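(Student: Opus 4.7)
The plan is to imitate the proof of Corollary \ref{col:uniquenessNil} and its multi-end generalization (stated just above for $\Nil$), replacing the explicit translation-invariant minimal graph $u^I(x,y)=\tau xy$ of $\Nil$ by an entire minimal graph in $\PSL$ that is invariant under the hyperbolic translation preserving the strip. Existence together with the bound $m<u<M$ is immediate from the general machinery: the two equidistant curves bounding a hyperbolic strip are $\mu$-convex (since $\mu\equiv 1$) and satisfy the non-accumulation condition of Definition \ref{Def:admi}, because hyperbolic geodesics joining diverging points on two distinct equidistant curves themselves escape every compact set. Therefore each end of $\Omega$ is contained in an admissible $\mu$-strip, and Theorem \ref{thm:existence-unbDom}, Remark \ref{rem:ends}, together with the subsequent remark on bounded boundary data, produce a solution of the Dirichlet problem satisfying $m<u<M$.

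For uniqueness, the crucial step is the $\PSL$-analog of Lemma \ref{lemma:TraslInv}: if $I\subset\PSL$ is an entire minimal graph invariant under the one-parameter group $\{\varphi_1^t\}$ of hyperbolic translations preserving the strip, then the only solution of $P(I,\Omega)$ is $u^I_{\mid\Omega}$. On a hyperbolic rectangle $R$ inside the strip, sufficiently long in the $\varphi_1$-direction, Theorem \ref{thm:JS} produces barriers $\omega^\pm$ that coincide with $u^I$ along the two equidistant sides of $R$ and diverge to $\pm\infty$ along the two transverse $\mu$-geodesic sides. Translating these barriers by $\varphi_1^t$ covers the strip and forces any solution $\tilde u$ of $P(I,\Omega)$ to satisfy $|\tilde u-u^I|<K$ for some constant $K$. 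The key geometric input, cited from \cite{Cas,Pe12}, is that the $\varphi_1$-invariant graph $u^I$ itself has bounded height in $\PSL$, so $|\tilde u-u^I|$ is bounded. Since $\mu\equiv 1$, Theorem \ref{thm:Collin-Krust-general} reduces to $L(r)=\mathrm{Length}(\Lambda(r))$, which is uniformly bounded in $r$ along a hyperbolic strip; hence $g(r)$ diverges linearly, contradicting the boundedness of $|\tilde u-u^I|$ unless $\tilde u\equiv u^I$.

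With this lemma in hand, the strip-level analog of Theorem \ref{them:uniquenessNil} follows from the same exhaustion argument: fill a single strip with hyperbolic rectangles $R_n$, use Theorem \ref{thm:JS} to obtain Jenkins--Serrin solutions $\omega_n^\pm$ vanishing on the equidistant sides and diverging to $\pm\infty$ on the transverse geodesic sides, and define $\omega^\pm=\lim_n\omega_n^\pm$ via Theorem \ref{thm:Compactness}. If $\omega^+\not\equiv 0$, Theorem \ref{thm:Collin-Krust-general-tau} forces unbounded growth; comparing with a horizontally and vertically translated copy $S_c$ of $I$ (using the hyperbolic translations orthogonal to $\varphi_1$ and the vertical isometries of $\PSL$) yields an open subset on which $\omega^+$ strictly exceeds $S_c$, producing a solution of $P(S_c,\cdot)$ at positive bounded vertical distance from $S_c$, contradicting the previous paragraph. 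The general case with bounded piecewise-$\mathcal{C}^2$ data then follows end-by-end exactly as in the multi-end generalization of Corollary \ref{col:uniquenessNil}. The main obstacle is assembling the three ingredients needed for the analog of Lemma \ref{lemma:TraslInv}: the bounded-height property of $\varphi_1$-invariant graphs in $\PSL$, the admissibility of hyperbolic strips, and the uniform bound on $\mathrm{Length}(\Lambda(r))$ that makes $g(r)$ diverge fast enough to close the argument.
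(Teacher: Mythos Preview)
Your outline is correct and coincides with the paper's own approach: the paper gives no separate proof, only the remark that the $\Nil$ arguments adapt verbatim once one knows from \cite{Cas,Pe12} that the $\varphi_1$-invariant minimal graphs in $\PSL$ have bounded height.

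One refinement is worth recording. The bounded-height fact is not needed in the analog of Lemma~\ref{lemma:TraslInv} (the translated Jenkins--Serrin barriers already yield $|\tilde u-u^I|<K$, and Theorem~\ref{thm:Collin-Krust-general} with $L(r)$ uniformly bounded on a hyperbolic strip then gives the contradiction); it enters only in the analog of Theorem~\ref{them:uniquenessNil}, where it replaces the linear growth of $\tau xy$. Precisely because $u^I$ is bounded, you should \emph{not} use ``hyperbolic translations orthogonal to $\varphi_1$'' to produce the comparison graph $S_c$: unlike in $\Nil$, where $\varphi_1$ and $\varphi_2$ commute modulo $\varphi_3$, such a translation in $\H^2$ destroys the $\varphi_1$-invariance of $I$ and with it the applicability of the lemma to $P(S_c,\cdot)$. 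A purely vertical translate $u^I+C$ with $C$ large already dominates $\omega^+$ on the boundary of a half-strip and remains $\varphi_1$-invariant, so the comparison step goes through directly and is in fact simpler than in $\Nil$.
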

	
		In $\E(-1,\tau)$ there is also a non-uniqueness result for strips. If we assume for example that $\Omega$ is the domain bounded by two disjoint geodesic, for any bounded function $f\in\mathcal{C}^\infty(\partial\Omega)$, there are infinitely many minimal graphs that are equal to $f$ on $\partial\Omega$. In particular, for any $g\in\mathcal{C}^\infty(\partial_\infty\H^2\cap\Omega)$, there exists a unique minimal graph $u\in\mathcal{C}^\infty(\Omega)$ such that $u(q)=f(q)$ for any $q\in\partial\Omega$ and, for any sequence $\left\{p_n\right\}\subset\H^2$ converging to $p\in\partial_\infty\H^2\cap\Omega$, we have that $\left\{u(p_n)\right\}$ converges to $g(p)$ (see, for instance, \cite[Theorem 4.9]{MRR} for $\tau=0$ and \cite[Theorem 6.3]{D} for $\tau\neq 0$).
		In general, \cite[Theorem 4.9]{MRR} and \cite[Theorem 6.3]{D} state that, if $\Omega\subset\H^2$ is an unbounded domain that contains an open segment of $\partial_\infty\H^2$, additional information on the asymptotic behaviour of the solution is required to prove a uniqueness result.
		
		In this setting, it remains open the case that lies in between: what happens if $\Omega$ is contained in an admissible strip (in the sense of Definition \ref{Def:admi}) that is not invariant by an hyperbolic translation? Is the minimal solution in $\Omega$ unique is it has bounded boundary values? The answer to this question can be deduced solving the following open problem:
		
		\emph{Let $\Omega\subset\H^2$ be a convex horodisk. Is $u\equiv c$ the unique minimal solution in $\Omega$ with constant boundary value $c$?}

	\section{Removable singularities Theorem} \label{Removale-sing-thm}
	In this section we prove a removable singularity result firstly proved by L.~Bers \cite{Be51} for minimal graphs, then by Finn~\cite{Fi56} for graphs of prescribed mean curvature in $\R^3$, by Nelli and Sa Earp\cite{NeSa} for graphs of prescribed mean curvature in the hyperbolic space and then extended in unitary Killing Submersions by C.~Leandro and H.~Rosenberg \cite[Theorem 4.1]{LeRo09}. The same technique used in \cite{LeRo09} can be applied since the function $\mu$ has an upper bound in the domains of $M$ where we are working. This extension guarantees a removable singularity result, for example, in $\Sol$ and for rotational graphs in $\R^3$. 
	\begin{theorem}\label{thm:Removable-Singularity}
		Let $u\colon\Omega\setminus\left\{p\right\} \to\R$, $\Omega\subset M$, be a function whose Killing graph has prescribed mean curvature $H\in C^{0,\alpha}(\overline{\Omega})$. Then $u$ extends smoothly to a solution at $p$.
	\end{theorem}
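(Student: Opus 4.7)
My plan is to adapt the classical Bers--Finn argument as implemented by Leandro--Rosenberg in the unitary Killing Submersion setting. Working locally, take a small closed topological disk $\overline{D}\subset\Omega$ around $p$ in which $\mu$ is bounded above and below by positive constants and $H$ is bounded in $C^{0,\alpha}$. Since $u$ is smooth on $\overline{D}\setminus\{p\}$, it is in particular bounded on the outer circle $\partial D$ by some constant $K$. The proof then proceeds in two main stages: first, show that $u$ itself is bounded on $D\setminus\{p\}$; second, use quasilinear elliptic regularity to extend $u$ smoothly across $p$.

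For the boundedness step, I construct upper and lower barriers of prescribed mean curvature $H$ on the annular domains $A_\epsilon=D\setminus\overline{B_\epsilon(p)}$. Specifically, for each small $\epsilon>0$ I would solve a Jenkins--Serrin style problem producing a graph $\omega_\epsilon^+$ over $A_\epsilon$ with mean curvature $H$, boundary value $K$ on $\partial D$, and limit value $+\infty$ on the inner circle $\partial B_\epsilon(p)$ (and analogously $\omega_\epsilon^-$ below). Existence for $\epsilon$ small enough follows from Theorem \ref{thm:JS} applied to an admissible approximating domain, where the condition \eqref{JS-condition} is easily verified on these nearly-annular domains once $\epsilon$ is small because $\mu$ is bounded and the lengths involved are controlled by the Euclidean model. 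Lemma \ref{MaxPrin} then gives $\omega_\epsilon^-\leq u\leq \omega_\epsilon^+$ on $A_\epsilon$.

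To turn this into a uniform bound, I would show that $\omega_\epsilon^\pm$ remains locally uniformly bounded on $D\setminus\{p\}$ as $\epsilon\to 0$. This is the heart of the argument: one compares $\omega_\epsilon^+$ with a rotationally symmetric local $H$-graph (a truncated unduloid-/catenoid-type piece), whose existence for small enough inner radius is guaranteed by solving the ODE \eqref{linearODE} on the local model $(\mathcal{U}\times\R,ds^2)$ introduced before Theorem \ref{thm:Collin-Krust-general-tau}. Using the fact that $\mu$ is bounded between positive constants on $\overline{D}$, the resulting family of rotational $H$-graphs provides a pointwise upper envelope for $\omega_\epsilon^+$ independent of $\epsilon$. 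Passing to the limit $\epsilon\to 0$ then gives a bound $|u|\leq C$ on $D\setminus\{p\}$.

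Once $u$ is bounded, the mean curvature equation \eqref{eq:H} is a quasilinear elliptic PDE on $D\setminus\{p\}$ of the form $\operatorname{div}(A(x,\nabla u))=B(x,u,\nabla u)$ with $A$ bounded and $B\in L^\infty$. Standard removable singularity theorems for such equations (singletons have zero $W^{1,2}$-capacity in dimension two, so a bounded weak solution on a punctured disk extends to a weak solution on the full disk) give that $u$ extends to $W^{1,\infty}_{\mathrm{loc}}(D)$. Since $H\in C^{0,\alpha}(\overline{\Omega})$, classical Schauder/De Giorgi--Nash--Moser regularity then upgrades this extension to $C^{2,\alpha}$ and finally to $C^\infty$ at $p$, completing the proof. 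The main obstacle is the construction and control of the barriers $\omega_\epsilon^\pm$: verifying the Jenkins--Serrin inequalities on the shrinking annuli and obtaining $\epsilon$-independent local bounds by comparison with rotational model surfaces, where the local $C^{2,\alpha}$ bound on $\mu$ and $H$ plays a crucial role.
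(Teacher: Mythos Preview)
Your proposal takes a genuinely different route from the paper, and it has real gaps. The paper does \emph{not} first prove that $u$ is bounded and then invoke abstract removable-singularity theory. Instead, following Leandro--Rosenberg, it directly constructs the extension: by \cite[Theorem~1]{DD09} there is a smooth $v$ on a small ball $B_R(p)$ with mean curvature $H$ and $v=u$ on $\partial B_R(p)$; one then sets $\varphi=\min\{|u-v|,C\}\cdot\mathrm{sign}(u-v)$, applies Stokes on the annulus $A(r,R)$ to $\varphi\bigl(\tfrac{\mu^2 Gu}{W_u}-\tfrac{\mu^2 Gv}{W_v}\bigr)$, and uses Lemma~\ref{lemma:factorization} to bound $\int_{A(r,R)\cap\{|u-v|<C\}}\|N_u-N_v\|^2$ by $Cm\,\Length(\partial B_r(p))\to 0$. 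This forces $Gu=Gv$ on each truncation level, hence $u=v$ on $B_R(p)\setminus\{p\}$. The truncation is precisely what bypasses any a priori boundedness of $u$.

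Your barrier strategy, by contrast, relies on tools that are stated in the paper only for the minimal case. Theorem~\ref{thm:JS} produces \emph{minimal} graphs with $\pm\infty$ boundary data, not $H$-graphs, so you cannot invoke it to build $\omega_\epsilon^\pm$ with prescribed mean curvature $H$. Likewise, the ODE~\eqref{linearODE} you cite governs rotational \emph{minimal} surfaces in a warped product with $\tau\equiv 0$; it does not yield the ``truncated unduloid-type'' $H$-barriers you need in a general Killing submersion with nonzero bundle curvature. Even granting existence of such barriers, the claim that they remain uniformly bounded on compacta as $\epsilon\to 0$ is exactly the hard analytic content, and comparison with a rotational model is not available here because the submersion need not admit any rotational symmetry near $p$. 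Finally, you attribute this boundedness-first scheme to Leandro--Rosenberg, but their argument is the truncation/divergence one the paper reproduces; you have misidentified the method you say you are adapting.
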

	\begin{proof}
		For any $R>0$, denote by $B_R(p)$ the $\mu$-geodesic ball of radius $R$ centered at $p \in M$. If $R$ is sufficiently small, \cite[Theorem 1]{DD09} guarantees the existence of a smooth function $v$ defined on $B_R(p)$ with:
		\begin{equation*}
			\begin{cases}
				\begin{array}{ll}
					\Div\left(\frac{\mu^2 Gv}{W_v}\right)=2\mu H,&\textrm{in }B_R(p);\\
					v=u,&\textrm{in }\partial B_R(p).
				\end{array}
			\end{cases}
		\end{equation*}
		
		Fix a positive constant $C$ and define the Lipschitz function
		\begin{equation*}
			\varphi=\begin{cases}
				\begin{array}{ll}
					u-v,&\textrm{if }| u-v|\
					< C;\\
					C,&\textrm{if }| u-v|\geq C.
				\end{array}
			\end{cases}
		\end{equation*}
		By definition, $\varphi$ satisfies $\nabla\varphi=\nabla u-\nabla v=Gu-Gv$ in the set $| u-v|<C$ and $\nabla\varphi= 0$ in its complement. 
		
		For $0<r<R$, let $A(r,R)=B_R(p)\setminus B_r(p)$ and denote by $m=\underset{\tiny B_R(p)}{\max}\mu$. Hence,
		\begin{equation*}
			\begin{array}{rcl}\int_{\partial A(r,R)}\varphi\mu\Prod{\frac{ \mu Gu}{W_u}-\frac{\mu Gv}{W_v},\nu}&=&\int_{\partial B_r(p)}\varphi\mu\Prod{\frac{ \mu Gu}{W_u}-\frac{\mu Gv}{W_v},\nu}+\int_{\partial B_R(p)}\varphi\mu\Prod{\frac{ \mu Gu}{W_u}-\frac{\mu Gv}{W_v},\nu}\\
				&\leq&\int_{\partial B_r(p)}Cm= Cm \Length(\partial B_r(p))	.				
			\end{array}
		\end{equation*}
		
		Since the Killing graphs of $u$ and $v$ have the same mean curvature, we have that,  when $| u-v|\geq C$, $\Div\varphi\left(\frac{\mu^2 Gu}{W_u}-\frac{\mu^2 Gv}{W_v}\right)=0$ and, when $| u-v|< C$,
		\begin{equation*}
			\begin{array}{rl}
				\Div\varphi\left(\frac{\mu^2 Gu}{W_u}-\frac{\mu^2 Gv}{W_v}\right)=&\Prod{\nabla\varphi,\frac{\mu^2 Gu}{W_u}-\frac{\mu^2 Gv}{W_v}}+\varphi\Div\left(\frac{\mu^2 Gu}{W_u}-\frac{\mu^2 Gv}{W_v}\right)\\
				=&\Prod{\nabla\varphi,\frac{\mu^2 Gu}{W_u}-\frac{\mu^2 Gv}{W_v}}\\=&\Prod{\nabla u-\nabla v,\frac{\mu^2 Gu}{W_u}-\frac{\mu^2 Gv}{W_v}}\\=&\Prod{Gu-Gv,\frac{\mu^2 Gu}{W_u}-\frac{\mu^2 Gv}{W_v}}\\=&\frac{W_u+W_v}{2}\Norm{N_u-N_v}^2_\E\leq\Norm{N_u-N_v}^2_\E,
			\end{array}
		\end{equation*}
		where the last equality follows by Lemma \ref{lemma:factorization}. By Stokes' Theorem, we have
		\begin{equation}\label{StokesCons}
			\begin{array}{c}	\int_{A(r,R)}\Div\varphi\left(\frac{\mu^2 Gu}{W_u}-\frac{\mu^2 Gv}{W_v}\right)=\int_{\partial A(r,R)}\varphi\Prod{\frac{\mu^2 Gu}{W_u}-\frac{\mu^2 Gv}{W_v},\nu}\\=\int_{\partial A(r,R)}\varphi\mu \Prod{\frac{\mu Gu}{W_u}-\frac{ \mu Gv}{W_v},\nu}\leq Cm\Length(S_r).
			\end{array}
		\end{equation}
		Thus, it follows that
		\begin{equation*}
			0\leq\int_{A(r,R)\cap\left\{| u-v|<C\right\}}\Norm{N_u-N_v}^2_\E
			\leq Cm\Length(S_r).
		\end{equation*}
		Since $m$ does not depend on $r$, as $r$ decreases to zero we get that $N_u = N_v$ on the set $|u-v|<C$. Hence, $Gu = Gv$ in the set $| u-v| <C$.
		Since $C$ was arbitrary, we have that $Gu = Gv$ in $A(0,R)$ and $u = v$ in $B_R(p) \setminus \left\{p\right\}$. Thus $u = v$ in $B_R(p)$.
	\end{proof}
	
	\section{Generalizations to higher dimension}
	\label{Generalization-higher-dim}
	In this last section we generalize some previous results to higher dimensions. Let $\pi\colon\bar{M}^{n+1}\to M^n$ be a Riemannian submersion such that the orbits of the vertical fibers are integral curves of a non-singular Killing vector field $\xi\in\mathfrak{X}(\bar{M})$. As above, $\Norm{\xi}$ is constant along the fibers, hence we can define the Killing Length $\mu=\Norm{\xi}$ as a smooth function of $M$. Let $\Omega\subset M$ be a domain. We assume that the integral curves of $\xi$ in $\bar{M}_0=\pi^{-1}(\Omega)$ are complete and non-compact.
	We first derive a formula for the mean curvature of a section of $\bar{M}$ as done in \cite[Lemma 2.1]{LeRo09} and \cite[Lemma 3.1]{LerMan17}.
	\begin{lemma}
		Let $\Sigma$ be a hypersurface of $\bar{M}$, transverse to the fibers of $\xi$. Let $N$ be a unit normal vector field to $\Sigma$. Then
		\begin{equation}\label{eq:MC}
			nH\mu=\Div(\mu\pi_* N),
		\end{equation} 
		where $\Div$ is the divergence on $M$ and $\mu=\Norm{\xi}$ is the Killing length.
	\end{lemma}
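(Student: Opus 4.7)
The plan is to extend $N$ to a $\xi$-invariant unit vector field on an open neighborhood of $\Sigma$, decompose it into its vertical and horizontal parts, and compute the ambient divergence $\Div_{\bar{M}}$; evaluated on $\Sigma$ this reproduces $nH$, while the Killing structure converts it into a divergence on $M$.

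Denoting by $\phi_t$ the flow of $\xi$, I would define $\tilde{N}$ on a tubular neighborhood of $\Sigma$ (which exists since $\Sigma$ is transverse to the fibers) by $\tilde{N}(\phi_t(p))=(\phi_t)_*N(p)$ for $p\in\Sigma$. Since $\phi_t$ is an isometry, $\|\tilde{N}\|\equiv 1$, and by construction $[\xi,\tilde{N}]=0$. Decompose $\tilde{N}=\tfrac{\nu}{\mu}\xi+\pi^*X$, with angle function $\nu=\langle N,\xi/\mu\rangle$ and $X=\pi_*N$; both $\nu$ and $X$ descend to $\Omega\subset M$ because $\tilde{N}$ and $\xi/\mu$ are $\xi$-invariant.

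I would then split $\Div_{\bar{M}}(\tilde{N})=\Div_{\bar{M}}(\tfrac{\nu}{\mu}\xi)+\Div_{\bar{M}}(\pi^*X)$. The first summand vanishes, since $\Div_{\bar{M}}(\xi)=0$ (as $\xi$ is Killing) and $\xi(\nu/\mu)=0$ (both factors are $\xi$-invariant). For the second, pick an orthonormal frame $\{E_1,\ldots,E_n\}$ on $M$ with horizontal lifts $\tilde{E}_i$, completed by $\xi/\mu$ to an orthonormal frame of $\bar{M}$. The horizontal contribution yields $\sum_i\langle\nabla_{E_i}X,E_i\rangle=\Div(X)$, by the O'Neill identity that $(\overline{\nabla}_{\tilde{E}_i}\pi^*X)^{\mathrm{hor}}=\widetilde{\nabla_{E_i}X}$. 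The vertical contribution is the Killing computation: since $[\xi,\pi^*X]=0$ (because $\xi$ preserves the horizontal distribution), one has $\overline{\nabla}_{\xi}\pi^*X=\overline{\nabla}_{\pi^*X}\xi$, so
\[\left\langle\overline{\nabla}_{\xi/\mu}\pi^*X,\tfrac{\xi}{\mu}\right\rangle=\tfrac{1}{\mu^2}\langle\overline{\nabla}_{\pi^*X}\xi,\xi\rangle=\tfrac{1}{2\mu^2}(\pi^*X)(\mu^2)=\tfrac{X(\mu)}{\mu}.\]
Adding the two contributions,
\[\Div_{\bar{M}}(\pi^*X)=\Div(X)+\tfrac{X(\mu)}{\mu}=\tfrac{1}{\mu}\Div(\mu X)=\tfrac{1}{\mu}\Div(\mu\,\pi_*N).\]

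It remains to identify $\Div_{\bar{M}}(\tilde{N})|_\Sigma$ with $nH$. At $p\in\Sigma$, complete an orthonormal basis $\{e_1,\ldots,e_n\}$ of $T_p\Sigma$ by $N$; then $\Div_{\bar{M}}(\tilde{N})=\sum_i\langle\overline{\nabla}_{e_i}\tilde{N},e_i\rangle+\langle\overline{\nabla}_N\tilde{N},N\rangle$. The last term vanishes because $\|\tilde{N}\|$ is constant on the whole neighborhood (not just on $\Sigma$), while the sum equals $\sum_i\langle\overline{\nabla}_{e_i}N,e_i\rangle$ (since $\tilde{N}|_\Sigma=N$ and $e_i$ is tangent to $\Sigma$), which is the trace of the shape operator of $\Sigma$, i.e.\ $nH$. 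Combining the two computations gives $nH\mu=\Div(\mu\,\pi_*N)$, as required.

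The principal technical obstacle is the vertical contribution to $\Div_{\bar{M}}(\pi^*X)$, which hinges on two interlocking properties: the $\xi$-invariance of horizontal lifts ($[\xi,\pi^*X]=0$, guaranteed by the Killing condition preserving the horizontal distribution) and the identity $\langle\overline{\nabla}_Y\xi,\xi\rangle=\tfrac{1}{2}Y(\mu^2)$, which is the only trace of the Killing equation that survives. Once these tools are in hand, the remaining steps reduce to careful bookkeeping with an adapted orthonormal frame.
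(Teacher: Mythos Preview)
Your argument is correct and follows essentially the same strategy as the paper: extend $N$ along the flow of $\xi$ to a $\xi$-invariant unit field, compute its ambient divergence, and identify the result with $nH$ on one side and $\tfrac{1}{\mu}\Div(\mu\,\pi_*N)$ on the other. The only organizational difference is that you split $\tilde{N}$ into its vertical and horizontal parts \emph{before} computing divergences (and dispose of the vertical piece using $\Div_{\bar M}(\xi)=0$), whereas the paper keeps $\bar N$ intact and instead splits the divergence via the adapted frame $\{E_1,\dots,E_n,\xi/\mu\}$, handling the vertical term by a direct Koszul computation using $\langle[\bar N,\xi],\xi\rangle=0$; both routes rely on the same Killing identities and yield the same vertical contribution $X(\mu)/\mu$.
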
 
	\begin{proof}
		For any $p\in \Sigma\subset\bar{M}$, let $\left\{E_i(p)\right\}_{i=1,\dots,n+1}$ be an oriented orthonormal basis of $T_p\bar{M}$ such that $E_{n+1}=\frac{\xi}{\mu}$. In particular, $\left\{\pi_*E_i(p)\right\}$ is an oriented orthonormal basis of $T_{\pi(p)}M$. If we denote by $\bar{N}$ the extension of $N$ to $\bar{M}$ (constant along the fibers of $\xi$), then at $p$ we get 
		\[nH=\Div_{\bar{M}}(\bar{N})=\sum_{i=1}^n\prodesc{\bar{\nabla}_{E_i}\bar{N}}{E_i}+\prodesc{\bar{\nabla}_{E_{n+1}}\bar{N}}{E_{n+1}}=\Div_M\left(\pi_*N\right)+\prodesc{\bar{\nabla}_{E_{n+1}}\bar{N}}{E_{n+1}}.\]
		Now, since $\xi$ is Killing and $\bar{N}$ is constant along the fibers of $\xi$, it follows that
		\[\prodesc{\left[\bar{N},\xi\right]}{\xi}=\prodesc{\bar{\nabla}_{\bar{N}}\xi}{\xi}-\prodesc{\bar{\nabla}_{\xi}\bar{N}}{\xi}=-\prodesc{\bar{\nabla}_{\xi}\xi}{\bar{N}}-\prodesc{\bar{\nabla}_{\xi}\bar{N}}{\xi}=-\xi\left(\prodesc{\bar{N}}{\xi}\right)=0.\]
		Finally, since $E_{n+1}=\frac{\xi}{\mu}$, we apply the Koszul formula to get
		\[\prodesc{\nabla_{E_{n+1}}\bar{N}}{E_{n+1}}=\prodesc{\left[E_{n+1},\bar{N}\right]}{E_{n+1}}=\tfrac{1}{\mu^2}\prodesc{\left[\xi,\bar{N}\right]}{\xi}-\tfrac{1}{\mu}\prodesc{\bar{N}(\mu)\xi}{\xi}=\tfrac{1}{\mu}\bar{N}(\mu)=\tfrac{1}{\mu}\prodesc{\bar{N}}{\bar{\nabla}\mu}=\tfrac{1}{\mu}\prodesc{\pi_*N}{\nabla\mu}\]
		since $\xi(\mu)=0$.
		
		In particular,
		\[nH=\Div(\pi_* N)+\tfrac{1}{\mu}\prodesc{\pi_*N}{\nabla\mu}=\tfrac{1}{\mu}\Div\left(\mu\pi_*N\right).\]
	\end{proof}
	Denote $\phi_t$ the 1-parameter group of isometries associated with $\xi$ and consider a smooth embedding $i\colon\Omega\to\bar{M}$, which is a section of the fibration, and assume $\Sigma_0 = i(\Omega)$ is transverse to $\xi$.
	Thus, we can define the Killing graph $\Sigma_u$ of a function $u\in C^2(\Omega)$ as the section $\Sigma_u=\left\{\phi_{u(x)}(i(x)),x\in\Omega\right\}$.
	
	Now, let us consider $\bar{u} \in C^\infty(\bar{M})$ the extension of $u$
	by making it constant along fibers, and $d \in C^\infty(\bar{M})$ the function that measures the signed vertical Killing distance from a point to the point in $\Sigma_0$ lying on the same fiber. In other
	words, $d$ is determined by the identity $\phi_d(p)(\Sigma_0(\pi(p))) = p$, for all $p\in\bar{M}$. Then the section $\Sigma_u$ is a level hypersurface of the function $u-d\in C^\infty(\bar{M})$ so we can compute	$N = \bar{\nabla}{(u-d)}/\Norm{\bar{\nabla}{(u-d)}}$. If we set $Z = \pi_*(\bar{d})$, we get
	\[\pi_* N=\frac{\mu Gu}{W_u},\qquad Gu=\nabla u-Z,\]
	where the gradient and the norm are computed in $M$ and $W_u^2=1+\mu^2\Norm{Gu}^2$ and
	\[H_u=\frac{1}{n \mu}\Div\left(\frac{\mu^2 Gu}{W_u}\right)\]
	
	Hence, as in \cite{DMN}, the following lemma holds:
	\begin{lemma}\label{lemma:factorization-higher-dim}
		For any $u,v\in\mathcal{C}^1(M)$, let us consider $N_u$ and $N_v$ the upward-pointing unit normal vector fields to $F_u$ and $F_v$, respectively. Then
		\[\left\langle Gu-Gv,\frac{\mu^2 Gu}{W_u}-\frac{\mu^2 Gv}{W_v}\right\rangle=\frac{1}{2}(W_u+W_v)\|N_u-N_v\|_{\bar{M}}^2\geq 0.\]
		Equality holds at some point $p\in M$ if and only if $\nabla u(p)=\nabla v(p)$.
	\end{lemma}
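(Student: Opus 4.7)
The identity is the higher-dimensional analogue of Lemma \ref{lemma:factorization}, and my plan is to mimic that proof by a direct algebraic computation, using the horizontal/vertical decomposition induced by the Killing submersion. The key input is an explicit formula for the upward unit normal $N_u$ of the graph $\Sigma_u$.

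First I would decompose $N_u$ along the splitting $T\bar M = \mathcal H \oplus \mathbb R\,\xi$, where $\mathcal H$ denotes the horizontal distribution. Write $N_u = H_u + \nu_u\,\xi/\mu$ with $H_u$ horizontal and $\nu_u\in C^\infty(\bar M)$. Since $\pi$ is a Riemannian submersion, $\|H_u\| = \|\pi_*H_u\|_M = \|\pi_*N_u\|_M = \mu\|Gu\|/W_u$. Unitarity of $N_u$ then forces $\nu_u^2 = 1 - \mu^2\|Gu\|^2/W_u^2 = 1/W_u^2$, and the upward orientation selects $\nu_u = 1/W_u$. Thus $H_u$ is precisely the horizontal lift of $\mu Gu/W_u$, and the same holds for $v$. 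Using orthogonality of $\mathcal H$ and $\mathbb R\,\xi$, together with the fact that horizontal lifts preserve norms, I obtain
\[
\|N_u-N_v\|^2_{\bar M} \;=\; \Bigl\|\tfrac{\mu Gu}{W_u}-\tfrac{\mu Gv}{W_v}\Bigr\|^2_M + \Bigl(\tfrac{1}{W_u}-\tfrac{1}{W_v}\Bigr)^2.
\]

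Next I would expand both sides. Setting $A = Gu$, $B = Gv$ and using $\mu^2\|A\|^2 = W_u^2-1$, $\mu^2\|B\|^2 = W_v^2-1$, the left-hand side simplifies to
\[
\Bigl\langle Gu-Gv,\tfrac{\mu^2 Gu}{W_u}-\tfrac{\mu^2 Gv}{W_v}\Bigr\rangle = \frac{(W_u+W_v)\bigl(W_uW_v - 1 - \mu^2\langle A,B\rangle\bigr)}{W_uW_v}.
\]
A parallel expansion of $\frac{1}{2}(W_u+W_v)\|N_u-N_v\|^2_{\bar M}$ using the displayed decomposition yields the same expression. Non-negativity is then immediate, since $W_uW_v \geq 1 + \mu^2|\langle A,B\rangle|$ by Cauchy--Schwarz and the inequality $\sqrt{(1+a)(1+b)}\geq 1+\sqrt{ab}$ applied to $a=\mu^2\|A\|^2$, $b=\mu^2\|B\|^2$.

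For the equality case, I would argue as follows. If $\nabla u(p) = \nabla v(p)$ then $Gu(p) = Gv(p)$, so both sides vanish. Conversely, equality in the identity forces $N_u(p) = N_v(p)$; the vertical components then give $W_u(p) = W_v(p)$, and the horizontal components give $Gu(p)/W_u(p) = Gv(p)/W_v(p)$, whence $Gu(p) = Gv(p)$ and $\nabla u(p) = \nabla v(p)$. The only subtlety worth double-checking is the correct sign of $\nu_u$ and the fact that $\pi_*N_u = \mu Gu/W_u$, both of which are recorded in the discussion preceding the statement; the remainder is a purely algebraic manipulation valid in any dimension $n\geq 2$.
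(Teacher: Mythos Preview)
Your proof is correct; the horizontal/vertical decomposition of $N_u$ and the ensuing algebraic expansion go through exactly as you describe, and the equality case is handled cleanly via $Gu-Gv=\nabla u-\nabla v$. The paper itself does not prove this lemma but simply defers to \cite{DMN} (``Hence, as in \cite{DMN}, the following lemma holds''); your direct computation is precisely the argument one expects there, so there is nothing to compare. One small remark: once the identity is established, non-negativity is immediate from the right-hand side since $W_u+W_v>0$ and $\|N_u-N_v\|^2\geq 0$, so the Cauchy--Schwarz step is unnecessary (though not incorrect).
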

	With the same technique as above, Lemma \ref{lemma:factorization-higher-dim} allows us to prove the following generalizations of Theorems \ref{thm:Collin-Krust-general} and \ref{thm:Removable-Singularity}.
	\begin{theorem}\label{thm:GD-CollinKrust}
		Let $p\in M$ and let $\Omega\subset M$ be an unbounded domain such that  $\Omega$ does not intersect the cut locus of $p$. Denote by $B(p,r)=\left\{x\in M\mid \textrm{dist}(x,p)< r\right\}$ and, for $r>r_0$, $\Omega(r)=\Omega\cap B(p,r)\neq\emptyset$ and $\Lambda(r)=\partial B(p,r)\cap\Omega$. 
		Let $M(r)=\underset{\Lambda(r)}{\sup}|u-v|,$ $L(r)=\int_{\Lambda(r)}\mu^2$ and $g(r)=\int_{r_0}^r \tfrac{ds}{L(s)}$ for some $r_0>0.$ Then,  \[\liminf_{r\to\infty}\frac{M(r)}{g(r)}>0.\]
		If $\mu$ is bounded above, it is sufficient to assume that $g(r)=\int_{r_0}^r\frac{ds}{\Vol(\Lambda(s))}$.		
	\end{theorem}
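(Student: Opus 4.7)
The plan is to mirror the proof of Theorem \ref{thm:Collin-Krust-general} line by line, replacing Lemma \ref{lemma:factorization} by Lemma \ref{lemma:factorization-higher-dim} and reinterpreting $\Lambda(s)$ as the $(n-1)$-dimensional hypersurface $\partial B(p,s)\cap\Omega$. The hypothesis $\Omega\cap\mathrm{Cut}(p)=\emptyset$ ensures that $\dist(p,\cdot)$ is smooth on $\Omega$, so the coarea formula slices $\Omega(r)$ by the family $\{\Lambda(s)\}_{s\in[r_0,r]}$ with no loss. I will implicitly assume, as in Theorem \ref{thm:Collin-Krust-general}, that $u>v$ in $\Omega$ and $u=v$ on $\partial\Omega$; then $\rho(r_0):=\int_{\Omega(r_0)}\|\tfrac{\mu Gu}{W_u}-\tfrac{\mu Gv}{W_v}\|^{2}$ is strictly positive once $r_0$ is large enough.

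Setting $\eta(r)=\int_{\Lambda(r)}\mu\|\tfrac{\mu Gu}{W_u}-\tfrac{\mu Gv}{W_v}\|$, my first step is to derive the fundamental inequality
\[
M(r)\,\eta(r)\;\geq\;\rho(r_0)+\int_{r_0}^{r}\frac{\eta^{2}(s)}{L(s)}\,ds,
\]
following \eqref{first-inequality}: bound $(u-v)\mu\|\tfrac{\mu Gu}{W_u}-\tfrac{\mu Gv}{W_v}\|$ on $\Lambda(r)$ by $M(r)$ times the density; extend the integral to $\partial\Omega(r)$ (the portion along $\partial\Omega\setminus\Lambda(r)$ vanishes since $u=v$ there); convert it to a volume integral of $\Div\!\bigl((u-v)(\tfrac{\mu^{2}Gu}{W_u}-\tfrac{\mu^{2}Gv}{W_v})\bigr)$ via the divergence theorem together with $H_u=H_v$ and the higher-dimensional mean-curvature formula \eqref{eq:MC}; invoke Lemma \ref{lemma:factorization-higher-dim}; drop the factor $W_u+W_v\geq 2$; and conclude by the coarea formula and the Cauchy--Schwarz inequality applied on each slice with $f=\|\tfrac{\mu Gu}{W_u}-\tfrac{\mu Gv}{W_v}\|$ and $h=\mu$.

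From this integral inequality the rest of the argument is identical to the two-dimensional case. The Chaplygin comparison with the explicit blow-up solution $\zeta(r)=a\rho(r_0)/(2a^{2}-\rho(r_0)[g(r)-g(r_0)])$ (with $a=M(r_1)$) yields $M(r_1)\geq\sqrt{\rho(r_0)[g(r_1)-g(r_0)]/2}$; a Stokes argument on the divergence-free field $\tfrac{\mu^{2}Gu}{W_u}-\tfrac{\mu^{2}Gv}{W_v}$ on $\Omega(r)$, combined with the fact that $Gu-Gv=\nabla(u-v)$ points into $\Omega$ along $\partial\Omega$, produces a constant $c_{0}>0$ with $\eta(r)\geq c_{0}$ for all large $r$; and choosing $r_2\in[r_0,r_1]$ with $g(r_2)=\tfrac{1}{2}(g(r_1)+g(r_0))$ and reapplying the estimate on $[r_2,r_1]$ upgrades the bound to the linear $M(r_1)\geq\tfrac{c_{0}}{2\sqrt{2}}[g(r_1)-g(r_0)]$, whence $\liminf_{r\to\infty}M(r)/g(r)>0$. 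The final claim for bounded $\mu\leq C$ on $\Omega$ is immediate from $L(r)\leq C^{2}\Vol(\Lambda(r))$, which gives $\int_{r_0}^{r}ds/L(s)\geq C^{-2}\int_{r_0}^{r}ds/\Vol(\Lambda(s))$, so replacing $g$ by the simpler integral only weakens the conclusion.

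The main obstacle I foresee is purely verificatory: one has to check that the Stokes-based lower bound $\eta(r)\geq c_{0}$ survives in higher dimensions, where $\partial\Omega\setminus\Lambda(r)$ is a union of hypersurface pieces rather than a disjoint union of arcs. Fortunately, Lemma \ref{lemma:factorization-higher-dim} delivers the required pointwise positivity $\langle\tfrac{\mu^{2}Gu}{W_u}-\tfrac{\mu^{2}Gv}{W_v},\,Gu-Gv\rangle\geq 0$ on $\partial\Omega$, with equality only on a set of measure zero, so that $\nabla(u-v)$ orients $\partial\Omega$ and the sign argument from \cite{MaNe17} adapts directly.
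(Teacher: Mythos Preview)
Your proposal is correct and follows precisely the approach the paper intends: the paper does not write out a separate proof for Theorem \ref{thm:GD-CollinKrust} but simply states that ``with the same technique as above, Lemma \ref{lemma:factorization-higher-dim} allows us to prove'' the higher-dimensional version, which is exactly the line-by-line transcription you outline. Your additional remarks on the coarea formula, the Stokes-based lower bound for $\eta(r)$, and the reduction for bounded $\mu$ are all in line with (and more explicit than) the paper's treatment.
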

	
	Now, if $\Sigma$ is an hypersurface of $\bar{M}$ that is invariant under vertical translation, then $\Gamma=\pi(\Sigma)$ is a well defined hypersurface of $M$ and, using $\eqref{eq:MC}$, we can compute the mean curvature of $\Sigma$ in terms of the mean curvature of $\Gamma$ in $M$ and the function $\mu$. In particular, 
	\[nH_\Sigma=\Div(\pi_* N)+\prodesc{\frac{\nabla\mu}{\mu}}{\pi_*N}=(n-1)\left(H_\Gamma+\frac{1}{n-1}\prodesc{\frac{\nabla\mu}{\mu}}{\pi_*N}\right),\]
	where $H_\Sigma$ is the mean curvature of $\Sigma$ in $\bar{M}$ and $H_\Gamma$ is the mean curvature of $\Gamma$ in $M$. The quantity $\left(H_\Gamma+\frac{1}{n-1}\prodesc{\frac{\nabla\mu}{\mu}}{\pi_*N}\right)$ is exactly the mean curvature of $\Gamma $ in $M$  with density $\tfrac{1}{\mu}$ (see \cite{C} for more details about properties of manifolds with density).
	
	Finally, we recall an existing theorem proved by Dajczer and De Lira, \cite[Theorem 1]{DD09}, for Killing graphs of prescribed mean curvature in Killing submersion with non-necessarily integrable horizontal distribution.
	\begin{theorem}\label{thm:existence-hig-dim}
		Let $\Omega\subset M$ be a relatively compact domain with $\mathcal{C}^{2,\alpha}$ boundary $\Gamma$. Suppose that $H_{cyl}>0$ (where $H_{cyl}$ is such that $H_\Gamma=H_{cyl}\circ\pi$ and describe the mean curvature of $\pi^{-1}(\Gamma)$) and that \[\textrm{Ric}_{\bar{M}}\geq-n\inf_\Gamma H_{cyl}^2.\]
		Let $H\in\mathcal{C}^\alpha(\Omega)$ and $\phi\in\mathcal{C}^{2,\alpha}(\Gamma)$ be given. If $\sup_\Omega|H|\leq\inf_\Gamma H_{cyl}$, then there exists a unique function $u\in\mathcal{C}^{2,\alpha}(\bar{\Omega})$ satisfying $u_{|\Gamma})\phi$ whose Killing graph has mean curvature $H$.
	\end{theorem}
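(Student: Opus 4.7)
The plan is to solve the quasilinear Dirichlet problem
\begin{equation*}
\mathcal{Q}(u) := \tfrac{1}{n\mu}\Div\!\left(\tfrac{\mu^2 Gu}{W_u}\right) - H = 0 \quad \text{in } \Omega, \qquad u|_\Gamma = \phi,
\end{equation*}
by the continuity method. Letting $H_0$ denote the mean curvature of the reference section $F_0$, I introduce the homotopy $\mathcal{Q}_t(u)=\tfrac{1}{n\mu}\Div\!\left(\tfrac{\mu^2 Gu}{W_u}\right)-(tH+(1-t)H_0)=0$ with $u|_\Gamma=t\phi$, which interpolates between $u\equiv 0$ at $t=0$ and the target problem at $t=1$. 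Let $I=\{t\in[0,1]:\mathcal{Q}_t\text{ admits a solution in }\mathcal{C}^{2,\alpha}(\bar\Omega)\}$. Openness of $I$ follows from the implicit function theorem in H\"older spaces, since the linearization of $\mathcal{Q}_t$ at a smooth graph is uniformly elliptic with H\"older coefficients and vanishing zero-order term, hence invertible $\mathcal{C}^{2,\alpha}_0(\bar\Omega)\to\mathcal{C}^\alpha(\bar\Omega)$ via Schauder theory.

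To prove closedness of $I$, I would derive uniform $\mathcal{C}^{2,\alpha}(\bar\Omega)$ estimates on any solution $u_t$. The $\mathcal{C}^0$ bound comes from using vertical translates of the cylinder $\pi^{-1}(\Gamma)$ as global barriers: since this cylinder has mean curvature $H_{cyl}\circ\pi$ and $\sup_\Omega|H|\leq\inf_\Gamma H_{cyl}$, the comparison principle based on Lemma \ref{lemma:factorization-higher-dim} bounds $\Norm{u_t}_{\mathcal{C}^0}$ in terms of $\Norm{\phi}_{\mathcal{C}^0}$ and the diameter of $\Omega$. The boundary gradient estimate is obtained near $\Gamma$ by constructing upper and lower barriers of the form $\phi\pm\psi(\delta)$, with $\delta$ the $M$-distance to $\Gamma$ and $\psi$ a concave function tuned so that $\mathcal{Q}(\phi\pm\psi)\lessgtr 0$ in a tubular neighbourhood of $\Gamma$; this construction works precisely because $H_{cyl}>\sup|H|$ on $\Gamma$. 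The interior gradient estimate is the subtle one: a Korevaar-type maximum principle applied to an auxiliary function of the form $\log W_{u_t}+f(u_t)$ on the graph, combined with a Bochner-type identity for the second fundamental form, produces an inequality whose bad term involves the Ricci curvature of $\bar{M}$ along horizontal directions. The hypothesis $\mathrm{Ric}_{\bar{M}}\geq -n\inf_\Gamma H_{cyl}^2$ is calibrated precisely to absorb this term. Once $\Norm{u_t}_{\mathcal{C}^1(\bar\Omega)}$ is uniformly controlled, $\mathcal{Q}$ becomes uniformly elliptic with H\"older coefficients, Schauder theory yields the $\mathcal{C}^{2,\alpha}$ bound, and Arzel\`a--Ascoli closes $I$.

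Uniqueness is immediate via Lemma \ref{lemma:factorization-higher-dim}: if $u,v\in\mathcal{C}^{2,\alpha}(\bar\Omega)$ satisfy $\mathcal{Q}(u)=\mathcal{Q}(v)=0$ and $u=v$ on $\Gamma$, then $\Div(\tfrac{\mu^2 Gu}{W_u}-\tfrac{\mu^2 Gv}{W_v})=0$, so multiplying by $u-v$ and integrating by parts (using $Gu-Gv=\nabla u-\nabla v$) yields
\begin{equation*}
\int_\Omega\Prod{Gu-Gv,\;\tfrac{\mu^2 Gu}{W_u}-\tfrac{\mu^2 Gv}{W_v}} = \int_\Omega\tfrac{1}{2}(W_u+W_v)\Norm{N_u-N_v}_{\bar{M}}^2 = 0,
\end{equation*}
which forces $\nabla u\equiv\nabla v$ and hence $u\equiv v$ in $\bar\Omega$.

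The main obstacle is the interior gradient estimate. The $\mathcal{C}^0$ and boundary gradient bounds follow routinely from the cylinder-barrier construction as soon as $H_{cyl}>\sup|H|$, which is built into the hypotheses. The interior estimate, however, requires a delicate differential-geometric calculation on the graph hypersurface inside $\bar{M}$ in which one must track the extra terms arising from the non-integrability of the horizontal distribution (encoded in the auxiliary field $Z=\pi_*\bar\nabla d$) and from the variation of $\mu$ along horizontal directions. The sharp constant $n\inf_\Gamma H_{cyl}^2$ in the Ricci lower bound is exactly what is needed to absorb the resulting curvature contribution and close the argument.
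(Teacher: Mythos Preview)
The paper does not give its own proof of this statement: it is quoted verbatim as \cite[Theorem~1]{DD09} and used as a black box to feed into the removable-singularity argument in higher dimensions. So there is no ``paper's own proof'' to compare against here.

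That said, your outline is essentially the strategy of \cite{DD09}: continuity method reduced to a priori $\mathcal{C}^{1}$ estimates, with the Ricci lower bound entering through a Bochner/Korevaar computation for the global gradient bound. Two small points are worth flagging. First, your $\mathcal{C}^0$ estimate via ``vertical translates of the cylinder $\pi^{-1}(\Gamma)$'' is not quite right as stated: the cylinder is itself vertical and invariant under the Killing flow, so its vertical translates coincide with it and cannot trap a graph from above and below. In \cite{DD09} the height estimate is obtained instead from a maximum-principle argument applied to a suitable test function (or equivalently from barriers built over balls, not over $\Gamma$); the hypothesis $\sup_\Omega|H|\le\inf_\Gamma H_{cyl}$ is what makes the \emph{boundary} gradient barriers work, not the $\mathcal{C}^0$ bound. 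Second, your homotopy $u|_\Gamma=t\phi$ together with $tH+(1-t)H_0$ only yields a solution at $t=0$ if the zero section has mean curvature $H_0$ \emph{and} vanishes on $\Gamma$, which need not hold; the cleaner homotopy is to fix the boundary data $\phi$ throughout and deform only the mean curvature (or, as in \cite{DD09}, to run the method of continuity on the linearized family directly). Neither point is fatal to the overall scheme, but both would need to be repaired in a complete write-up.
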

	Theorem \ref{thm:existence-hig-dim}, together with Lemma \ref{lemma:factorization-higher-dim}, enables us to apply the same argument used in the proof of Theorem \ref{thm:GC-RemSin} to extend the result to higher dimensions.
	\begin{theorem}\label{thm:GC-RemSin}
		Let $\Omega\subset M$ be an open domain and $u\colon\Omega\setminus\left\{p\right\} \to\R$ be a function whose Killing graph has prescribed mean curvature $H$. Then $u$ extends smoothly to a solution at $p$.
	\end{theorem}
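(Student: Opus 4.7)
The idea is to transcribe the argument of Theorem \ref{thm:Removable-Singularity} to the higher-dimensional setting, replacing the use of \cite[Theorem 1]{DD09} by Theorem \ref{thm:existence-hig-dim} and Lemma \ref{lemma:factorization} by Lemma \ref{lemma:factorization-higher-dim}. First I would fix a radius $R>0$ so small that the geodesic ball $B_R(p)\subset\Omega$ has $\mathcal{C}^{2,\alpha}$ boundary and satisfies the hypotheses of Theorem \ref{thm:existence-hig-dim}. Since the cylindrical mean curvature $H_{cyl}$ of $\pi^{-1}(\partial B_R(p))$ behaves like $(n-1)/R$ as $R\to 0$ (Euclidean model plus bounded corrections coming from the sectional curvatures of $M$ and from $\nabla\log\mu$), both $\textrm{Ric}_{\bar M}\geq -n\inf H_{cyl}^2$ and $\sup_\Omega|H|\leq\inf H_{cyl}$ are automatic for $R$ small enough. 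Theorem \ref{thm:existence-hig-dim} then supplies a smooth comparison function $v\in\mathcal{C}^{2,\alpha}(\overline{B_R(p)})$ whose Killing graph has prescribed mean curvature $H$ and satisfies $v=u$ on $\partial B_R(p)$.

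\textbf{Truncation, Stokes and limit.} For an arbitrary constant $C>0$, introduce the Lipschitz truncation
\[
\varphi=\begin{cases}
u-v & \text{if }|u-v|<C,\\
C\,\operatorname{sgn}(u-v) & \text{if }|u-v|\geq C,
\end{cases}
\]
so that $\nabla\varphi=Gu-Gv$ on $\{|u-v|<C\}$ and $\nabla\varphi\equiv 0$ elsewhere. On the annular region $A(r,R)=B_R(p)\setminus\overline{B_r(p)}$, apply the divergence theorem to the vector field $X=\varphi\bigl(\tfrac{\mu^2 Gu}{W_u}-\tfrac{\mu^2 Gv}{W_v}\bigr)$. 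Because $u$ and $v$ share the same prescribed mean curvature, the identity $nH\mu=\Div(\mu^2 Gu/W_u)$ from Section \ref{Generalization-higher-dim} makes the bracketed vector field divergence-free, so $\Div X=\prodesc{\nabla\varphi}{\tfrac{\mu^2 Gu}{W_u}-\tfrac{\mu^2 Gv}{W_v}}$; Lemma \ref{lemma:factorization-higher-dim} then rewrites this interior integrand as the nonnegative quantity $\tfrac12(W_u+W_v)\Norm{N_u-N_v}_{\bar M}^2$. The boundary term on $\partial B_R(p)$ vanishes since $\varphi=0$ there, while the one on $\partial B_r(p)$ is estimated by $2Cm\,\Vol(\partial B_r(p))$ with $m=\max_{\overline{B_R(p)}}\mu$. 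Consequently
\[
\int_{A(r,R)\cap\{|u-v|<C\}}\tfrac12(W_u+W_v)\Norm{N_u-N_v}_{\bar M}^2\leq 2Cm\,\Vol(\partial B_r(p)).
\]
Since $n\geq 2$ one has $\Vol(\partial B_r(p))=O(r^{n-1})\to 0$ as $r\to 0$, so the integrand vanishes on $\{|u-v|<C\}\cap B_R(p)$ and hence $\nabla u=\nabla v$ there by the equality case of Lemma \ref{lemma:factorization-higher-dim}. As $C$ is arbitrary, $\nabla u=\nabla v$ on all of $B_R(p)\setminus\{p\}$, and the boundary condition $u=v$ on $\partial B_R(p)$ forces $u\equiv v$ on $B_R(p)\setminus\{p\}$. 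Thus $u$ extends smoothly to $p$ through $v$.

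\textbf{Main obstacle.} The only step that is not a direct copy of the $n=2$ argument is the first one: verifying that the hypotheses of Theorem \ref{thm:existence-hig-dim} can simultaneously be arranged by shrinking $R$. This is essentially a curvature comparison estimate on the cylindrical mean curvature of small geodesic spheres in $M$, which is $(n-1)/R$ to leading order; the argument is standard but requires some care in tracking the dependence on $\mu$ and on the horizontal distribution of $\pi$. Once $v$ is produced, the rest is a mechanical translation of the 3-dimensional proof, with $\Length$ replaced by $\Vol$ and the new boundary estimate $\Vol(\partial B_r(p))=O(r^{n-1})$ taking the place of $\Length(\partial B_r(p))=O(r)$.
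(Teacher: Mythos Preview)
Your proposal is correct and follows exactly the route the paper indicates: the paper does not spell out a separate proof of Theorem~\ref{thm:GC-RemSin} but simply states that Theorem~\ref{thm:existence-hig-dim} together with Lemma~\ref{lemma:factorization-higher-dim} allow one to reproduce verbatim the argument of Theorem~\ref{thm:Removable-Singularity}, with $\Length(\partial B_r(p))$ replaced by $\Vol(\partial B_r(p))$. You have carried this out in detail, and in fact are more careful than the 3-dimensional proof on two minor points (the sign in the truncation $\varphi=C\operatorname{sgn}(u-v)$ and the explicit verification that the hypotheses of Theorem~\ref{thm:existence-hig-dim} hold for small $R$).
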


\end{document}